\definecolor{e-mail}{rgb}{0,.40,.80}
\definecolor{reference}{rgb}{.20,.60,.22}
\definecolor{citation}{rgb}{0,.40,.80}
\newtheorem{thm}{Theorem}
\newtheorem{cor}[thm]{Corollary}
\newtheorem{lem}[thm]{Lemma}
\newtheorem{prop}[thm]{Proposition}
\theoremstyle{definition}
\newtheorem{defn}[thm]{Definition}
\theoremstyle{remark}
\newtheorem{rem}[thm]{Remark}
\numberwithin{thm}{section}
\theoremstyle{definition}
\theoremstyle{definition}
\theoremstyle{definition}
\numberwithin{equation}{section}
 \title[Differential Galois groups of $q$-difference equations]{Computing differential Galois groups of \\ second-order linear $q$-difference equations}
\author[C.E. Arreche]{Carlos E. Arreche}
\address{Department of Mathematical Sciences, The University of Texas at Dallas, Texas, USA}
\email{arreche@utdallas.edu}
\thanks{The work of both authors was partially supported by NSF grant CCF-1815108.}
\author[Y. Zhang]{Yi Zhang}
\address{Department of Applied Mathematics, Xi'an Jiaotong-Liverpool University, Suzhou, China}
\email{Yi.Zhang03@xjtlu.edu.cn}
\begin{document}


\begin{abstract}
We apply the differential Galois theory for difference equations developed by Hardouin and Singer to compute the differential Galois group for a second-order linear $q$-difference equation with rational function coefficients. This Galois group encodes the possible polynomial differential relations among the solutions of the equation. We apply our results to compute the differential Galois groups of several concrete $q$-difference equations, including for the colored Jones polynomial of a certain knot.
\end{abstract}

\maketitle


\section{Introduction} \label{intro-sec}

Consider a second-order homogeneous linear $q$-dilation equation \begin{equation}\label{intro-eq} y(q^2x)+a(x)y(qx)+b(x)y(x)=0,\end{equation} whose coefficients $a(x),b(x)\in\bar{\mathbb{Q}}(x)$ are rational functions in $x$ with $b(x)\neq 0$, and $q\in\bar{\mathbb{Q}}$ is neither zero nor a root of unity. We develop algorithms that allow one to discover all the polynomial differential equations satisfied by the solutions to \eqref{intro-eq}, or to decide that there are none. Our methods and results apply equally well, with small and obvious modifications, to equations \eqref{intro-eq} where $q$ is not necessarily an algebraic number and the coefficients $a,b\in C(x)$ for any computable algebraically closed field $C$ containing $\mathbb{Q}(q)$. Our strategy here is similar to the one followed in \cite{arreche:2017}, where analogous algorithmic results were developed in the context of shift difference equations. We apply the differential Galois theory for difference equations developed in \cite{hardouin-singer:2008}, which studies equations such as \eqref{intro-eq} from a purely algebraic point of view. This theory attaches a geometric object $G$ to \eqref{intro-eq}, called the differential Galois group, that encodes all the difference-differential algebraic relations among the solutions to \eqref{intro-eq}. We develop an algorithm to compute the differential Galois group $G$ associated to \eqref{intro-eq} by the theory of \cite{hardouin-singer:2008}.

The differential Galois theory for difference equations of \cite{hardouin-singer:2008} is a generalization of the $q$-dilation analogue of the Galois theory for difference equations presented in \cite{vanderput-singer:1997}, where the Galois groups that arise encode the algebraic relations among the solutions to a given linear difference equation. An algorithm to compute the Galois group $\tilde{H}$ associated to \eqref{intro-eq} by the theory of \cite{vanderput-singer:1997} is developed in \cite{hendriks:1997}---but for technical reasons this algorithm works only over the larger base field $\bar{\mathbb{Q}}(\{x^{1/n}\}_{n\in\mathbb{N}})$, rather than the field of definition $\bar{\mathbb{Q}}(x)$ of \eqref{intro-eq}. In the course of our computation of the differential Galois group $G$ of \eqref{intro-eq}, we also extend the algorithm of \cite{hendriks:1997} to compute the Galois group $H$ of \eqref{intro-eq} over the smaller original basefield $\bar{\mathbb{Q}}(x)$.

A priori one knows that the Galois group $H$ is a linear algebraic group, and the differential Galois group $G$ is a linear differential algebraic group (Definition~\ref{ldag-def}). The difference Galois group $H$ serves as a close upper bound for the difference-differential Galois group $G$: it is shown in \cite{hardouin-singer:2008} that one can consider $G$ as a Zariski-dense subgroup of $H$ without loss of generality (see Proposition~\ref{dense} for a precise statement). In view of this fact, our strategy to compute $G$ is to first apply our extension (developed in the present work) of the algorithm of \cite{hendriks:1997} to compute $H$, and then compute the additional differential-algebraic equations (if any) that define $G$ as a subgroup of $H$. The computation of $G$ in general can be much more difficult than that of $H$ because there are many more linear differential algebraic groups than there are linear algebraic groups (more precisely, the latter are instances of the former), so identifying the correct differential Galois group from among these additional possibilities requires additional work. 

This strategy is reminiscent of the one begun in \cite{dreyfus:2011}, and concluded in \cite{arreche:2014a, arreche:2014b, arreche:2015}, to compute the parameterized differential Galois group for a second-order linear differential equation with differential parameters, where the results of \cite{kovacic:1986, baldassari-dwork:1979} are first applied to compute the classical (non-parameterized) differential Galois group for the differential equation, and one then computes the additional differential-algebraic equations, with respect to the parametric derivations, that define the parameterized differential Galois group inside the classical one. However, the computation of the differential Galois group $G$ for \eqref{intro-eq} presents substantial new complications that do not arise in the parameterized differential setting. Many of these new complications are inherent to the computation of differential Galois groups of difference equations in general, and already arise in the context of shift difference equations (see the introduction to \cite{arreche:2017} for a summary), but a brand new technical difficulty arises for the first time in the context of $q$-difference equations, which we describe below. The same difficulties will recur, with a vengeance, in the context of Galois theory for difference equations over elliptic curves; our hope is that the treatment developed here will serve as a useful blueprint for that more technical setting.

It is known (see \cite{hendriks:1997}) that the Galois group $\tilde{H}$ of any $q$-difference equation over $\bar{\mathbb{Q}}(\{x^{1/n}\}_{n\in\mathbb{N}})$ has a cyclic group of connected components $\tilde{H}/\tilde{H}^\circ$. This fact facilitates the development of the algorithm of \cite{hendriks:1997}. However, the Galois group $H$ of a $q$-difference equation over $\bar{\mathbb{Q}}(x)$ may admit more generally a bicyclic group of connected components, which requires the development of new techniques to identify the correct Galois group from among this larger set of possibilities.

A theoretical consequence of the results of \S\ref{reducible-sec} is Corollary~\ref{unipotent-possibilities}, which states that the unipotent radical of the differential Galois group may only be trivial, the additive group of differentially constant $\sigma$-invariants, or the full additive group of $\sigma$-invariants. This result was already known when the whole differential Galois $G$ group was already unipotent \cite[Prop.~4.3(2)]{hardouin-singer:2008}, but not when the unipotent radical is a proper subgroup of $G$. In other contexts (see for example \cite{minchenko-ovchinnikov-singer:2013a, minchenko-ovchinnikov-singer:2013b}) the computation of the unipotent radical has turned out to be the main theoretical obstacle in the development of algorithms to compute Galois groups in general. We expect that this contribution to the inverse Galois problem in the present setting will have useful ramifications in the development of future algorithms to compute differential Galois groups for higher-order $q$-difference equations.

Let us now describe the contents of this work in more detail. In \S\ref{prelim-sec}, we summarize the difference-differential Galois theory of \cite{hardouin-singer:2008}, and prove some auxiliary results that will be used in the sequel. In \S\ref{order1-sec}, we recall some known results, and prove some new ones, concerning differential relations among solutions to first-order $q$-dilation difference equations. In \S\ref{hendriks-sec}, we summarize Hendriks' algorithm \cite{hendriks:1998} to compute the difference Galois group $\tilde{H}$ for \eqref{intro-eq} over $\bar{\mathbb{Q}}(\{x^{1/n}\}_{n\in\mathbb{N}})$, and explain how to extend it to compute the difference Galois group $H$ for \eqref{intro-eq} over $\bar{\mathbb{Q}}(x)$. In \S\ref{diagonalizable-sec}, we show how to compute the difference-differential Galois group $G$ for \eqref{intro-eq} when $H$ is diagonalizable in Proposition~\ref{galdiag}. In \S\ref{reducible-sec}, we show how to compute $G$ when $H$ is assumed to be reducible but non-diagonalizable in Proposition~\ref{wrat} and Proposition~\ref{wnotrat}---as a consequence, we show in Corollary~\ref{unipotent-possibilities} that the unipotent radical of $G$ is always of a very special form. In \S\ref{dihedral-sec}, we compute $G$ in Proposition~\ref{imprimitive-prop-1}, Proposition~\ref{imprimitive-prop-2}, and Proposition~\ref{imprimitive-prop-3}, under the assumption that $H$ is irreducible and imprimitive (which possibility can arise in three different ways, as a consequence of our insistence on computing Galois groups over the basefield $\bar{\mathbb{Q}}(x)$ and not just over $\bar{\mathbb{Q}}(\{x^{1/n}\}_{n\in\mathbb{N}})$). In \S\ref{large-sec}, we apply results from \cite{arreche-singer:2016} to compute $G$ in Proposition~\ref{sl2-prop}, under the assumption that $H$ contains $\mathrm{SL}_2$. We conclude in \S\ref{examples-sec} by applying these results to some concrete examples of $q$-difference equations; in particular to the one satisfied by the colored Jones polynomial of a certain knot.


\section{Preliminaries on differential Galois theory for difference equations} \label{prelim-sec}

We begin with a summary of the difference-differential Galois theory presented in \cite{hardouin-singer:2008}. Every field is assumed to be of characteristic zero, and every ring is assumed to be commutative unless otherwise stated.

\begin{defn} A $\sigma\delta$\emph{-ring} is a commutative ring $R$ with unit, equipped with an automorphism $\sigma$ and a derivation $\delta$ such that $\sigma\left(\delta(r)\right)=\delta\left(\sigma(r)\right)$ for every $r\in R$. A $\sigma\delta$-field is defined analogously. We write $$R^\sigma=\{r\in R \ | \ \sigma(r)=r\};\quad R^\delta=\{r\in R \ | \ \delta(r)=0\}; \quad\text{and}\quad R^{\sigma\delta}=R^\sigma\cap R^\delta,$$ and refer to these as the subrings of $\sigma$\emph{-constants}, $\delta$\emph{-constants}, and $\sigma\delta$\emph{-constants}, respectively.

A $\sigma\delta$-$R$-algebra is a $\sigma\delta$-ring $S$ equipped with a ring homomorphism $R\rightarrow S$ that commutes with both $\sigma$ and $\delta$. If $R$ and $S$ are fields, we also say that $S$ is a $\sigma\delta$-field extension of $R$. The notions of $\sigma$-$R$-algebra, $\delta$-$R$-algebra, $\sigma$-field extension, and $\delta$-field extension are defined analogously. If $z_1,\dots,z_n\in S$, we write $R\{z_1,\dots,z_n\}_\delta$ for the smallest $\delta$-$R$-subalgebra of $S$ that contains $z_1,\dots,z_n$; as $R$-algebras, we have $$R\{z_1,\dots,z_n\}_\delta=R[\{\delta^i(z_1),\dots,\delta^i(z_n) \ | \ i\in\mathbb{N}\}].$$ If $Z=(z_{ij})$ with $1\leq i,j\leq n$ is a matrix, we write $R=\{Z\}_\delta$ for \[R\{z_{11},\dots,z_{1n},\dots,z_{n1},\dots,z_{nn}\}_\delta.\]
\end{defn}

The main example of $\sigma\delta$-field that we will consider throughout most of this paper is $k=\bar{\mathbb{Q}}(x)$, where $\sigma$ denotes the $\bar{\mathbb{Q}}$-linear automorphism defined by $\sigma(x)= qx$ for some fixed $q\in\bar{\mathbb{Q}}$ that is neither zero nor a root of unity, and $\delta=x\tfrac{d}{dx}$. Note that in this case $k^\sigma=k^\delta=\bar{\mathbb{Q}}$.

Suppose that $k$ is a $\sigma\delta$-field, and consider the matrix difference equation \begin{equation}\label{difeq1} \sigma(Y)=AY, \quad \text{where} \ A\in\mathrm{GL}_n(k).\end{equation}

\begin{defn} \label{pv-ring-def}A $\sigma\delta$\emph{-Picard-Vessiot ring} (or $\sigma\delta$\emph{-PV ring}) over $k$ for \eqref{difeq1} is a $\sigma\delta$-$k$-algebra $R$ such that:
\begin{enumerate}
\item[(i)] $R$ is a \emph{simple} $\sigma\delta$-ring, i.e., $R$ has no ideals, other than ${0}$ and $R$, that are stable under both $\sigma$ and $\delta$;
\item[(ii)] there exists a matrix $Z\in\mathrm{GL}_n(R)$ such that $\sigma(Z)=AZ$; and
\item[(iii)] $R$ is differentially generated as a $\delta$-$k$-algebra by the entries of $Z$ and $1/\mathrm{det}(Z)$, i.e., $R=k\{Z,1/\mathrm{det}(Z)\}_\delta$.
\end{enumerate}
The matrix $Z$ is called a \emph{fundamental solution matrix} for \eqref{difeq1}. \end{defn}

Note that when $\delta=0$, this coincides with the definition of the $\sigma$-PV ring over $k$ for \eqref{difeq1} given in \cite[Def.~1.5]{vanderput-singer:1997}. In the usual Galois theory of difference equations presented in \cite{vanderput-singer:1997}, the existence and uniqueness of Picard-Vessiot rings up to $k$-$\sigma$-isomorphism is guaranteed by the assumption that $k^\sigma$ is algebraically closed (see \cite[\S1.1]{vanderput-singer:1997}). Analogously, in the difference-differential Galois theory developed in \cite{hardouin-singer:2008}, one needs to assume that $k^\sigma$ is $\delta$\emph{-closed} \cite{kolchin:1974, trushin:2010}.

\begin{defn} The ring of $\delta$-\emph{polynomials} in $n$ variables over a $\delta$-field $C$ is $$C\{Y_1,\dots,Y_n\}_\delta=C[\{\delta^i(Y_1),\dots,\delta^i(Y_n) \ | \ i\in\mathbb{N}\}],$$ the free $C$-algebra on the symbols $\delta^i(Y_j)$, on which $\delta$ acts as a derivation in the obvious way. We say $\mathcal{L}\in C\{Y_1,\dots,Y_n\}_\delta$ is a \emph{linear} $\delta$-\emph{polynomial} if it belongs to the $C$-linear span of the symbols $\delta^i(Y_j)$.

If $R$ is a $\delta$-$C$-algebra, we say that $z_1\dots,z_n\in R$ are \emph{differentially dependent} over $C$ if there exists a $\delta$-polynomial $0\neq P\in C\{Y_1,\dots,Y_n\}_\delta$ such that $P(z_1,\dots,z_n)=0$; otherwise we say that $z_1,\dots,z_n$ are $\delta$-\emph{independent} over $C$. When a single element $z\in R$ is $\delta$-independent (resp., $\delta$-dependent) over $C$, we say that $z$ is $\delta$-\emph{transcendental} (resp., $\delta$-\emph{algebraic}) over $C$.

We say the $\delta$-field $C$ is $\delta$-\emph{closed} if any system of $\delta$-polynomial equations $$\{P_1=0,\dots,P_m=0 \ | \ P_i\in C\{Y_1,\dots,Y_n\}_\delta \ \ \text{for} \ \ 1\leq i\leq m\}$$ that has a solution in $\tilde{C}^n$ for some $\delta$-field extension $\tilde{C}\supseteq C$ already has a solution in $C^n$.
\end{defn}

\begin{thm} \label{nnc}(Cf.~\cite[Prop.~2.4]{hardouin-singer:2008}) If $k^\sigma=C$ is $\delta$-closed, there exists a $\sigma\delta$-PV ring for \eqref{difeq1}, and it is unique up to $\sigma\delta$-$k$-isomorphism. Moreover, $R^\sigma=k^\sigma$.
\end{thm}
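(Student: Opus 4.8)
The plan is to establish the three assertions of Theorem~\ref{nnc}---existence, uniqueness up to $\sigma\delta$-$k$-isomorphism, and the identity $R^\sigma=k^\sigma$---by mimicking the classical Picard--Vessiot construction, but carrying the derivation $\delta$ along at every stage and using $\delta$-closedness of $C=k^\sigma$ exactly where algebraic closedness is used in the purely difference setting. For \emph{existence}, I would start from the full $\sigma\delta$-$k$-algebra $S=k\{X,1/\det(X)\}_\delta$, where $X=(X_{ij})$ is an $n\times n$ matrix of differential indeterminates, with $\sigma$ extended to $S$ by declaring $\sigma(X)=AX$ (and then extending to all $\delta^i(X_{ij})$ using commutativity $\sigma\delta=\delta\sigma$, which forces $\sigma(\delta^i(X))=\delta^i(AX)$ and is thus well-defined). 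One then chooses a maximal $\sigma\delta$-ideal $\mathfrak{m}\subset S$ and sets $R=S/\mathfrak{m}$. By maximality $R$ is a simple $\sigma\delta$-ring, so condition~(i) holds; conditions~(ii) and~(iii) hold by construction with $Z$ the image of $X$. The one point requiring care is that $\mathfrak{m}$ is a \emph{proper} ideal, i.e. that $S$ itself is not $\sigma\delta$-simple with $1\in\mathfrak{m}$ impossible---this is automatic since $S\neq 0$.

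The heart of the argument, and the place where $\delta$-closedness enters, is the proof that $R^\sigma=k^\sigma=C$; both uniqueness and this constants computation hinge on it. The key structural fact I would establish first is that in a simple $\sigma\delta$-ring $R$ which is a $k$-algebra, the total ring of fractions is well-behaved and, crucially, that the ring of $\sigma$-constants $R^\sigma$ is a $\delta$-field extension of $C$. I would then argue that any element $c\in R^\sigma$ that is $\delta$-algebraic over $C$ must in fact lie in $C$: if not, the $\sigma\delta$-subalgebra it generates would produce, via the simplicity of $R$ and a dimension/Wronskian-type argument, a nontrivial system of $\delta$-polynomial equations over $C$ solvable in a $\delta$-field extension but---by $\delta$-closedness---already solvable in $C$, contradicting $c\notin C$. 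Ruling out $\delta$-transcendental $\sigma$-constants requires showing that $R^\sigma$ is $\delta$-algebraic over $C$ in the first place, which I expect to follow from the finite differential-type of $R$ together with the simplicity hypothesis forcing $R\otimes_C R^\sigma$ to remain $\sigma\delta$-simple.

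For \emph{uniqueness}, given two $\sigma\delta$-PV rings $R_1,R_2$ for~\eqref{difeq1}, I would form the tensor product $R_1\otimes_k R_2$, which is again a $\sigma\delta$-$k$-algebra, choose a maximal $\sigma\delta$-ideal in it to obtain a simple quotient $T$, and analyze the two induced maps $R_i\to T$. The fundamental solution matrices $Z_1,Z_2$ both satisfy $\sigma(Z)=AZ$ inside $T$, so $Z_1^{-1}Z_2$ has entries that are $\sigma$-constants in $T$; by the constants computation applied to $T$ (whose $\sigma$-constants I would show equal $C$ again), this matrix lies in $\mathrm{GL}_n(C)$, which yields that both maps $R_i\to T$ are isomorphisms onto the same subalgebra, giving the desired $\sigma\delta$-$k$-isomorphism $R_1\cong R_2$.

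The main obstacle is the constants computation $R^\sigma=C$, specifically the step ruling out $\sigma$-constants that are $\delta$-transcendental or merely $\delta$-algebraic over $C$ but not in $C$. In the classical difference setting one argues that $R^\sigma$ is an algebraic field extension of the algebraically closed $C$ and hence equals $C$; here the correct analogue is that $R^\sigma$ is a \emph{$\delta$-algebraic} $\delta$-field extension of the $\delta$-closed $C$, and one must invoke the defining property of $\delta$-closedness to collapse it. Making the passage from ``element of $R^\sigma\setminus C$'' to ``solvable $\delta$-polynomial system over $C$ with no solution in $C$'' fully rigorous---rather than merely analogous to the algebraic case---is the delicate part, and is presumably where the cited \cite[Prop.~2.4]{hardouin-singer:2008} concentrates its effort; I would lean on the Kolchin--Ritt theory of $\delta$-closed fields \cite{kolchin:1974, trushin:2010} to supply the needed differential Nullstellensatz-type input.
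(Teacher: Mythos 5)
You are reconstructing a proof the paper never gives: Theorem~\ref{nnc} is quoted from \cite[Prop.~2.4]{hardouin-singer:2008}, and your skeleton---existence via a maximal $\sigma\delta$-ideal of $S=k\{X,1/\mathrm{det}(X)\}_\delta$ with $\sigma(X)=AX$, uniqueness via a $\sigma\delta$-simple quotient $T$ of $R_1\otimes_k R_2$ and comparison of the fundamental matrices $Z_1^{-1}Z_2\in\mathrm{GL}_n(T^\sigma)$---is precisely the construction carried out there, adapting \cite[\S 1.1]{vanderput-singer:1997}. Those two parts are sound, and you correctly observe that everything reduces to the constants computation $R^\sigma=C$ for finitely $\delta$-generated $\sigma\delta$-simple rings.

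The genuine gap is in that computation, and it is more serious than the ``delicate point'' you flag at the end. Your pivotal claim---that any $c\in R^\sigma$ which is $\delta$-algebraic over $C$ must lie in $C$---is false for $\delta$-closed fields as the paper defines them, and your contradiction scheme cannot be repaired in the form you state it. The paper's definition of $\delta$-closed only asserts that systems of $\delta$-polynomial \emph{equations} solvable in an extension are solvable in $C$; so from $c\notin C$ you can only ever extract \emph{some} solution $d\in C$ of whatever equations $c$ satisfies, which does not contradict $c\notin C$. Indeed a $\delta$-closed $C$ admits proper $\delta$-algebraic $\delta$-field extensions, e.g.\ $C(t)$ with $\delta(t)=0$ and $t$ transcendental over $C^\delta$: every equation system satisfied by $t$ already has solutions in $C^\delta\subset C$, so no ``system solvable in an extension but not in $C$'' can be manufactured from such a $c$, and your proposed contradiction never materializes. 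The missing step is the one where $\sigma\delta$-simplicity converts the solution $d$ into the identity $c=d$: since $\sigma(c-d)=c-d$, the $\delta$-ideal $[c-d]\subseteq R$ is a $\sigma\delta$-ideal, so simplicity forces $[c-d]=0$, i.e.\ $c=d\in C$, \emph{provided} one can choose $d$ with $[c-d]$ proper. Arranging that choice is exactly where the cited proof does its real work, leaning on Kolchin's theory of constrained extensions \cite{kolchin:1974, trushin:2010}: the defining conditions of $c$ must include an \emph{inequation} (a constraint), and ``$\delta$-closed'' is used in the constrained sense so that the point $d\in C$ can be taken to satisfy the constraint and not merely the equations. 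That mechanism handles all of $R^\sigma$ uniformly, so your separate preliminary step ruling out $\delta$-transcendental $\sigma$-constants---via ``finite differential type'' and an unexplained $\sigma\delta$-simplicity of $R\otimes_C R^\sigma$---is neither an argument as sketched nor needed once the constrained-point argument is in place.
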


The following structural result is stated in a more general context in \cite[Lem.~6.8]{hardouin-singer:2008}, with the exception of the second part of item~(3), which is proved as in \cite[Cor.~1.16]{vanderput-singer:1997}.

\begin{prop} \label{idemp} Let $R$ be a $\sigma\delta$-PV ring over $k$ for \eqref{difeq1}, where $k^\sigma$ is $\delta$-closed. There exist idempotents $e_0,\dots,e_{t-1}\in R$ such that:
\begin{enumerate}
\item $R=R_0\oplus\dots\oplus R_{t-1}$, where $R_i= e_i R$;
\item the action of $\sigma$ permutes the set $\{ R_0,\dots,R_{t-1}\}$ transitively, and each $R_i$ is left invariant by $\sigma^t$;
\item each $R_i$ is a domain, and a $\sigma^t\delta$-PV ring over $k$ for $\sigma^t(Y)=(\sigma^{t-1}(A)\dots\sigma(A)A)Y$.
\end{enumerate}
\end{prop}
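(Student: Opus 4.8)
The plan is to exploit the simplicity of $R$ as a $\sigma\delta$-ring together with the fact that $R^\sigma = k^\sigma = C$ is $\delta$-closed (hence in particular a field). My strategy follows the classical template of \cite[Cor.~1.16]{vanderput-singer:1997} adapted to the $\sigma\delta$-setting via \cite[Lem.~6.8]{hardouin-singer:2008}. First I would recall that, since $R$ is a simple $\sigma\delta$-ring that is finitely $\delta$-generated over the field $k$, it is a Noetherian ring whose minimal primes are permuted by $\sigma$; the intersection of any $\sigma$-orbit of minimal primes is a $\sigma\delta$-ideal, which by simplicity must be $(0)$. The key structural input is to produce the idempotents: I would consider the set of minimal idempotents of $R$ (equivalently, decompose $\mathrm{Spec}(R)$ into its connected components) and show this set is finite and permuted by $\sigma$.

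Next I would establish the cyclic/transitive structure. Since $R$ has no nontrivial $\sigma\delta$-stable ideals, the ring $R^{\sigma^t}$-worth of idempotents fixed by $\sigma$ must be trivial: any $\sigma$-invariant idempotent $e$ gives a $\sigma\delta$-ideal $eR$, forcing $e \in \{0,1\}$. From this I would deduce that $\sigma$ acts transitively on the set of primitive idempotents $\{e_0,\dots,e_{t-1}\}$, since any union of a proper nonempty $\sigma$-orbit would yield a nontrivial $\sigma$-invariant idempotent. Relabeling so that $\sigma(e_i) = e_{i+1}$ (indices mod $t$), I would set $R_i = e_i R$ and obtain the decomposition $R = R_0 \oplus \dots \oplus R_{t-1}$ of part~(1), with $\sigma$ cyclically permuting the summands and $\sigma^t$ stabilizing each, giving part~(2).

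For part~(3), I would verify that each $R_i$ is a domain by noting that a primitive idempotent corresponds to a connected component, and since $R$ is reduced (its nilradical is a $\sigma\delta$-ideal, hence zero, using characteristic zero), each connected component $R_i$ is in fact integral. To see that $R_i$ is a $\sigma^t\delta$-PV ring for $\sigma^t(Y) = (\sigma^{t-1}(A)\cdots\sigma(A)A)Y$, I would check the three defining axioms of Definition~\ref{pv-ring-def} with $\sigma$ replaced by $\sigma^t$: the fundamental matrix $e_i Z$ satisfies the new equation because $\sigma^t(Z) = (\sigma^{t-1}(A)\cdots A)Z$ by iterating $\sigma(Z)=AZ$; the $\sigma^t\delta$-simplicity of $R_i$ follows from the $\sigma\delta$-simplicity of $R$ together with the transitivity of the $\sigma$-action on the summands; and $R_i = k\{e_i Z, 1/\det(e_i Z)\}_\delta$ follows by applying $e_i$ to the generating relation $R = k\{Z, 1/\det(Z)\}_\delta$.

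The main obstacle I anticipate is the second half of part~(3), namely the $\sigma^t\delta$-simplicity of each $R_i$, since this is precisely the point where \cite[Lem.~6.8]{hardouin-singer:2008} stops short and one must argue as in \cite[Cor.~1.16]{vanderput-singer:1997}: one must show that a nonzero $\sigma^t\delta$-ideal $I \subsetneq R_i$ cannot exist by "spreading it around the orbit," i.e.\ forming $\bigoplus_{j} \sigma^j(I)$ and checking this assembles into a genuine $\sigma\delta$-ideal of $R$ that is proper and nonzero, contradicting simplicity. The delicate part is confirming that the derivation $\delta$ interacts correctly with this orbit-summing construction, which requires using the commutativity $\sigma\delta = \delta\sigma$ to see that $\delta$ preserves each $\sigma^j(I)$ compatibly.
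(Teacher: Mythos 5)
Your overall route coincides with the paper's: the paper gives no self-contained proof but cites \cite[Lem.~6.8]{hardouin-singer:2008} for everything except the $\sigma^t\delta$-simplicity half of item~(3), which it attributes to the argument of \cite[Cor.~1.16]{vanderput-singer:1997}; your sketch (idempotents from simplicity, transitivity because a $\sigma$-invariant idempotent generates a $\sigma\delta$-ideal, orbit-summing $I\oplus\sigma(I)\oplus\cdots\oplus\sigma^{t-1}(I)$ for the simplicity of each factor) reconstructs exactly that argument. Your final paragraph is correct and is precisely the step the paper outsources to \cite[Cor.~1.16]{vanderput-singer:1997}; note also that the worry about $\delta$ in the idempotent step evaporates because every idempotent is automatically a $\delta$-constant: from $e^2=e$ one gets $2e\delta(e)=\delta(e)$, and multiplying by $e$ gives $e\delta(e)=0$, hence $\delta(e)=0$ in characteristic zero, so $eR$ really is a $\delta$-ideal.

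However, two of your justifications fail as written. First, $R$ is \emph{not} Noetherian in general: ``finitely $\delta$-generated'' means generated as a $k$-algebra by the infinitely many elements $\delta^i(z_{jl})$, and for instance $k\{y\}_\delta$ carries the strictly ascending chain of ordinary ideals $(\delta y)\subset(\delta y,\delta^2 y)\subset\cdots$; when the solutions are $\delta$-transcendental the $\sigma\delta$-PV ring is of exactly this type. The finiteness of the set of minimal primes (equivalently, of primitive idempotents) must instead come from the Ritt--Raudenbush basis theorem: $R$ is reduced because its nilradical is a $\sigma\delta$-ideal (using that in characteristic zero the nilradical of a $\delta$-ring is $\delta$-stable), and then $(0)$ is an irredundant intersection of finitely many $\delta$-prime ideals, which $\sigma$ permutes---this is the technical content of \cite[Lem.~6.8]{hardouin-singer:2008} that Noetherianity cannot replace. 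Second, ``reduced with connected spectrum implies domain'' is false (e.g.\ $k[x,y]/(xy)$ is reduced with connected spectrum but is not a domain), so your argument that each $R_i$ is a domain has a hole. The domain property comes from the construction itself: with minimal primes $\mathfrak{p}_0,\dots,\mathfrak{p}_{t-1}$ permuted transitively by $\sigma$, the ideal $\sum_i\bigcap_{j\neq i}\mathfrak{p}_j$ is a nonzero $\sigma\delta$-ideal (each summand is a $\delta$-ideal and $\sigma$ permutes them), hence equals $R$ by simplicity; writing $1=\sum_i e_i$ accordingly produces orthogonal idempotents with $e_iR\cong R/\mathfrak{p}_i$, a domain. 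Both repairs run along the very lines of the cited proofs, but as written your finiteness and integrality steps would not go through.
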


From now on, unless explicitly stated otherwise, we assume that $k$ is a $\sigma\delta$-field such that $k^\sigma$ is $\delta$-closed.

\begin{defn} The $\sigma\delta$\emph{-Galois group} of \eqref{difeq1} is the group of $\sigma\delta$-$k$-automorphisms of the $\sigma\delta$-PV ring $R$ for \eqref{difeq1}: $$\mathrm{Gal}_{\sigma\delta}(R/k)=\{\gamma\in\mathrm{Aut}_{k\text{-alg}}(R) \ | \ \gamma\circ\sigma = \sigma\circ\gamma \ \text{and} \ \gamma\circ\delta = \delta\circ\gamma\}.$$\end{defn}

As in the usual (non-differential) Galois theory of difference equations \cite{vanderput-singer:1997}, a choice of fundamental solution $Z=(z_{ij})\in\mathrm{GL}_n(R)$ defines a faithful representation $\mathrm{Gal}_{\sigma\delta}(R/k)\hookrightarrow\mathrm{GL}_n(k^\sigma):\gamma\mapsto M_\gamma$, via $$\gamma(Z)=\begin{pmatrix} \gamma(z_{11}) & \cdots & \gamma(z_{1n}) \\ \vdots & & \vdots \\ \gamma(z_{n1}) & \cdots & \gamma(z_{nn}) \end{pmatrix} = \begin{pmatrix} z_{11} & \cdots & z_{1n} \\ \vdots & & \vdots \\ z_{n1} & \cdots & z_{nn} \end{pmatrix}\cdot M_\gamma.$$ A different choice of fundamental solution matrix $Z'\in\mathrm{GL}_n(R)$ defines a conjugate representation of $\mathrm{Gal}_{\sigma\delta}(R/k)$ in $\mathrm{GL}_n(k^\sigma)$.

\begin{defn} \label{equivalent-def} The systems $\sigma(Y)=AY$ and $\sigma(Y)=BY$ for $A,B\in\mathrm{GL}_n(k)$ are \emph{equivalent} if there exists a matrix $T\in\mathrm{GL}_n(k)$ such that $\sigma(T)AT^{-1}=B$. In this case, if $Z$ is a fundamental solution matrix for $\sigma(Y)=AY$, then $TZ$ is a fundamental solution matrix for $\sigma(Y)=BY$, and therefore the $\sigma\delta$-PV rings of $k$ for these systems defined by the choice of fundamental solution matrices $Z$ and $TZ$, and the associated representations of $\sigma\delta$-Galois groups in $\mathrm{GL}_n(k^\sigma)$, are isomorphic.\end{defn}

\begin{defn} \label{ldag-def} Suppose that $C$ is a $\delta$-closed field. A \emph{linear differential algebraic group} over $C$ is a subgroup $G$ of $\mathrm{GL}_n(C)$ defined by (finitely many) $\delta$-polynomial equations in the matrix entries. We say that $G$ is $\delta$\emph{-constant} if $G$ is conjugate in $\mathrm{GL}_n(C)$ to a subgroup of $\mathrm{GL}_n(C^\delta)$.\end{defn}

The differential algebraic subgroups of the additive and multiplicative groups of $C$, which we denote respectively by $\mathbb{G}_a(C)$ and $\mathbb{G}_m(C)$, were classified in \cite[Prop.~11, Prop.~31 and its Corollary]{cassidy:1972}.

\begin{prop} \label{classification} If $G\leq \mathbb{G}_a(C)$ is a differential algebraic subgroup, then there exists a linear $\delta$-polynomial $\mathcal{L}\in C\{Y\}_\delta$ such that $$G=\{b\in\mathbb{G}_a(C) \ | \ \mathcal{L}(b)=0\}.$$

If $G\leq \mathbb{G}_m(C)$ is a differential algebraic subgroup, then either $G=\mu_\ell$, the group of $\ell^\text{th}$ roots of unity for some $\ell\in\mathbb{N}$, or else $\mathbb{G}_m(C^\delta)\subseteq G$, and there exists a linear $\delta$-polynomial $\mathcal{L}\in C\{Y\}_\delta$ such that $$G=\left\{a\in\mathbb{G}_m(C) \ \middle| \ \mathcal{L}\bigl(\tfrac{\delta a}{a}\bigr)=0\right\}.$$\end{prop}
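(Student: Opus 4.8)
The plan is to handle the additive and multiplicative cases in turn. For a differential algebraic subgroup $G\le\mathbb{G}_a(C)$, let $I\subseteq C\{Y\}_\delta$ be its defining radical $\delta$-ideal. The first and main step is to show that $I$ is generated by the linear $\delta$-polynomials it contains. I would exploit the additive group law: for any $P\in I$, the $\delta$-polynomial $P(Y+Z)-P(Y)-P(Z)$ in two differential indeterminates $Y,Z$ vanishes on $G\times G$, and since $0\in G$ one has $P(0)=0$; because the characteristic is zero, an induction on total degree then peels off the top-degree component of $P$ and forces $I$ to be generated by its linear part $L:=\{\mathcal{L}\in I \mid \mathcal{L}\text{ is linear}\}$. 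Equivalently, $C\{Y\}_\delta/I$ is a commutative Hopf algebra in which the class of $Y$ is primitive, and in characteristic zero the primitive elements of the polynomial Hopf algebra $C\{Y\}_\delta$ are exactly the linear $\delta$-polynomials, so the Hopf ideal $I$ is generated by $L$. I expect this passage — from an arbitrary $\delta$-polynomial relation to a linear one — to be the principal obstacle.

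Granting $G=\{b\mid\mathcal{L}(b)=0\ \forall\,\mathcal{L}\in L\}$, it remains to replace $L$ by a single generator. Identifying $\mathcal{L}=\sum_i c_i\delta^i(Y)$ with the twisted (Ore) operator $\sum_i c_i\delta^i$ subject to $\delta\cdot c=c\cdot\delta+\delta(c)$, the linear $\delta$-polynomials form a left twisted polynomial ring over $C$. A direct computation shows that if $\mathcal{L}(b)=0$ for all $b\in G$ then $\delta(\mathcal{L}(b))=(\delta\cdot\mathcal{L})(b)=0$ as well, so $L$ is a left ideal of this ring. Since that ring is left Euclidean, hence a left principal ideal domain, $L$ is generated by a single $\mathcal{L}$, and therefore $G=\{b\in\mathbb{G}_a(C)\mid\mathcal{L}(b)=0\}$, as asserted.

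For a differential algebraic subgroup $G\le\mathbb{G}_m(C)$ I would use the logarithmic derivative $a\mapsto\delta(a)/a$, which is a homomorphism of differential algebraic groups $\mathbb{G}_m(C)\to\mathbb{G}_a(C)$ whose kernel is precisely $\mathbb{G}_m(C^\delta)$. If $G$ is finite it is a finite subgroup of the multiplicative group of a field, hence cyclic, giving $G=\mu_\ell$. If $G$ is infinite, its Zariski closure is an algebraic subgroup of $\mathbb{G}_m$ that is not finite, hence all of $\mathbb{G}_m(C)$, so $G$ is Zariski dense; I then claim $\mathbb{G}_m(C^\delta)\subseteq G$. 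The intersection $K:=G\cap\mathbb{G}_m(C^\delta)$ is an ordinary algebraic subgroup of $\mathbb{G}_m(C^\delta)$, so either $K=\mu_m$ or $K=\mathbb{G}_m(C^\delta)$; the delicate point, which I would treat following Cassidy, is to rule out the finite alternative by analyzing the fibre of $\delta(a)/a$ over $0$. Since the image of $G$ under the logarithmic derivative is a $C^\delta$-subspace of $\mathbb{G}_a(C)$ containing $0$, an infinite $G$ with finite $K$ is incompatible with $G$ being Kolchin closed, so $K=\mathbb{G}_m(C^\delta)$ and $\mathbb{G}_m(C^\delta)\subseteq G$. Finally, applying the additive case to the logarithmic-derivative image $\{\delta(a)/a\mid a\in G\}=\{v\mid\mathcal{L}(v)=0\}$ for a suitable linear $\mathcal{L}$, and using that $G$ is the full preimage of this image under $a\mapsto\delta(a)/a$, I obtain $G=\{a\in\mathbb{G}_m(C)\mid\mathcal{L}(\delta(a)/a)=0\}$, completing the classification.
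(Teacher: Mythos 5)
Note first that the paper does not prove this proposition at all: it is quoted directly from \cite[Prop.~11, Prop.~31 and its Corollary]{cassidy:1972}, so your attempt is a reconstruction of Cassidy's argument rather than a rival to anything in the text. Your additive half is essentially correct and completable as sketched: in characteristic zero the Vandermonde trick ($0=P(ng)=\sum_d n^d P_d(g)$ for all $n\in\mathbb{Z}$) puts each homogeneous component of $P\in I$ back into $I$, and the cleanest finish is to truncate at each order $n$, note that the Zariski closure of the image of $G$ in the coordinates $(Y,\delta Y,\dots,\delta^n Y)$ is an algebraic subgroup of a vector group, hence (characteristic zero) a linear subspace, and conclude that the linear elements of $I$ cut out exactly $G$; your Hopf-primitive remark is a legitimate repackaging of the same characteristic-zero fact. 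The reduction to a single $\mathcal{L}$ is also sound: the linear $\delta$-polynomials vanishing on $G$ do form a left ideal of the twisted polynomial ring $C[\delta]$, which is left Euclidean and hence a left principal ideal domain, and the zero set of the left ideal equals the zero set of its generator.

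The genuine gap is in the multiplicative case, at exactly the point you yourself label ``delicate'' and defer ``following Cassidy'': you never rule out an infinite $G$ with $K:=G\cap\mathbb{G}_m(C^\delta)$ finite. The sentence asserting that such a $G$ is ``incompatible with $G$ being Kolchin closed'' is a claim, not an argument---nothing about the image of the logarithmic derivative $\ell\delta\colon a\mapsto\delta(a)/a$ being a $C^\delta$-subspace containing $0$ prevents it. If $K$ were finite, then $\ell\delta$ would restrict to an isogeny from $G$ onto the subgroup $A=\ell\delta(G)\leq\mathbb{G}_a(C)$, so one must show there is no nontrivial differential algebraic homomorphism from a subgroup of $\mathbb{G}_a$ into $\mathbb{G}_m$ (equivalently, no definable homomorphic ``exponential'' section of $\ell\delta$ over $A$); this is the actual content of Cassidy's Prop.~31 and its Corollary and takes real work---for instance, one can argue that the constants are stably embedded with pure field structure, so such a section would restrict to an algebraic homomorphism from an additive group to $\mathbb{G}_m$, which is trivial in characteristic zero---but none of this appears in your proposal. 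A second, smaller omission: you apply the additive case to the image $\ell\delta(G)$, which presupposes that this image is a differential algebraic (Kolchin closed) subgroup of $\mathbb{G}_a(C)$; over the $\delta$-closed field $C$ this is true, because the image is Kolchin constructible by quantifier elimination and a constructible subgroup of a differential algebraic group equals its closure, but it must be stated and justified, since your final step, writing $G$ as the full $\ell\delta$-preimage of $\{v\mid\mathcal{L}(v)=0\}$, depends on it.
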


\begin{thm} (Cf.~\cite[Thm.~2.6]{hardouin-singer:2008}) Suppose that $k^\sigma$ is $\delta$-closed, and that $R$ is a $\sigma\delta$-PV ring over $k$ for \eqref{difeq1}. Then $R$ is a reduced ring, and any choice of fundamental solution matrix $Z\in\mathrm{GL}_n(R)$ identifies $\mathrm{Gal}_{\sigma\delta}(R/k)$ with a linear differential algebraic subgroup of $\mathrm{GL}_n(k^\sigma)$.\end{thm}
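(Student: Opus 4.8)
The plan is to establish the two assertions—that $R$ is reduced and that $\mathrm{Gal}_{\sigma\delta}(R/k)$ is a linear differential algebraic subgroup of $\mathrm{GL}_n(k^\sigma)$—in sequence, leveraging the idempotent decomposition of Proposition~\ref{idemp} to reduce to the case where $R$ is a domain. First I would invoke Proposition~\ref{idemp} to write $R=R_0\oplus\dots\oplus R_{t-1}$ with each $R_i=e_iR$ a domain and a $\sigma^t\delta$-PV ring over $k$ for the iterated system. Since each $R_i$ is a domain, it is reduced; because $R$ is a finite direct sum of reduced rings, $R$ itself is reduced. This settles the first claim cheaply and shows why the structural result is the natural starting point.

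For the second claim, the key idea is to exhibit the Galois group as the solution set of a system of $\delta$-polynomial equations in the matrix entries of $M_\gamma$. Fix a fundamental solution matrix $Z=(z_{ij})\in\mathrm{GL}_n(R)$, giving the faithful representation $\gamma\mapsto M_\gamma$ into $\mathrm{GL}_n(k^\sigma)$ described after the Galois group definition. The plan is to consider the $\delta$-$k$-algebra $R\otimes_k R$ (suitably interpreted) and the ideal defining the diagonal, or more concretely to work with the universal construction: one forms $R\{X,1/\det(X)\}_\delta$ where $X=(X_{ij})$ is a matrix of $\delta$-indeterminates representing a candidate automorphism via $Z\mapsto ZX$, and one extracts the defining ideal $I$ of relations that must hold for such a substitution to respect all the $\sigma\delta$-relations among the $z_{ij}$ that hold in $R$. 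The requirement that $\sigma$ and $\delta$ commute with $\gamma$, together with the constraint that $\gamma$ fixes $k$ pointwise and preserves $\sigma(Z)=AZ$, forces $X$ to have entries in $k^\sigma$ and to satisfy a system of $\delta$-polynomial constraints. The content is that $\mathrm{Gal}_{\sigma\delta}(R/k)$ is exactly the $k^\sigma$-points of the affine differential-algebraic scheme cut out by $I\cap C\{X,1/\det(X)\}_\delta$, which by definition makes it a linear differential algebraic group.

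Concretely, I would argue that every $\sigma\delta$-$k$-automorphism $\gamma$ of $R$ is determined by $M_\gamma\in\mathrm{GL}_n(k^\sigma)$, and conversely that $M\in\mathrm{GL}_n(k^\sigma)$ arises from some $\gamma$ if and only if right-multiplication $Z\mapsto ZM$ preserves the defining ideal of $R$ inside the appropriate $\delta$-polynomial ring. Spelling out this preservation condition yields finitely many $\delta$-polynomial equations in the entries of $M$—here one uses that $C=k^\sigma$ is $\delta$-closed (so that the differential-algebraic geometry behaves well and the relevant ideals are finitely generated as $\delta$-ideals via a differential basis theorem, as in the Kolchin–Ritt framework underlying \cite{hardouin-singer:2008}). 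Because $R$ is reduced, distinct automorphisms give distinct $M_\gamma$, so the representation is faithful and its image is precisely the zero set of these equations.

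\textbf{The main obstacle} I expect is the passage from ``preserves the defining relations'' to ``cut out by finitely many $\delta$-polynomial equations.'' Unlike the purely algebraic difference setting of \cite{vanderput-singer:1997}, where one appeals to the Hilbert basis theorem and ordinary affine group schemes, here one must control the $\delta$-ideal structure and ensure that the Galois group is genuinely Kolchin-closed rather than merely a subgroup defined by infinitely many conditions. This is exactly where $\delta$-closedness of $k^\sigma$ and the differential analogues of the Picard–Vessiot correspondence from \cite{hardouin-singer:2008} must be deployed; the bookkeeping of commuting $\sigma$ with $\delta$ throughout the tensor-product construction, and checking that the extracted ideal lies in $C\{X,1/\det(X)\}_\delta$ (i.e., has constant coefficients), is the technically delicate part. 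Since the statement is cited as Cf.~\cite[Thm.~2.6]{hardouin-singer:2008}, I anticipate the proof will largely reduce these points to the cited theorem, using Proposition~\ref{idemp} to handle reducedness and the non-connectedness of the Galois group.
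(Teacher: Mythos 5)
The paper does not actually prove this statement: it is quoted as background, with the proof delegated wholesale to \cite[Thm.~2.6]{hardouin-singer:2008}, so there is no in-paper argument to compare against line by line. Your sketch is essentially a reconstruction of the Hardouin--Singer proof strategy: the representation $\gamma\mapsto M_\gamma$ lands in $\mathrm{GL}_n(k^\sigma)$ because $\sigma\bigl(Z^{-1}\gamma(Z)\bigr)=(AZ)^{-1}A\gamma(Z)=Z^{-1}\gamma(Z)$, so $M_\gamma\in\mathrm{GL}_n(R^\sigma)=\mathrm{GL}_n(k^\sigma)$ by Theorem~\ref{nnc}, and the group is carved out by $\delta$-polynomial conditions via the universal substitution $Z\mapsto ZX$ you describe, with the Ritt--Raudenbush basis theorem supplying the reduction to finitely many equations. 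You correctly identify that last step as the technically delicate point, and deferring it to the cited theorem is exactly what the paper itself does.

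Two repairs are needed, though. First, your reducedness argument has an ordering problem: Proposition~\ref{idemp} formally applies (its hypotheses match the theorem's), and a finite direct sum of domains is certainly reduced, but in the source the decomposition of $R$ into domains is \emph{derived from} reducedness---one first shows $R$ is reduced, then observes that $\sigma$ permutes the finitely many minimal primes and $\delta$ preserves them---so as a self-contained proof your route is circular. The direct argument is shorter anyway: the nilradical of $R$ is stable under $\sigma$ (an automorphism) and under $\delta$ (in characteristic zero the nilradical of a $\mathbb{Q}$-algebra is a $\delta$-ideal, since $a^n=0$ forces $\delta(a)$ nilpotent), hence it is a $\sigma\delta$-ideal not containing $1$, hence zero by the simplicity required in Definition~\ref{pv-ring-def}(i). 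Second, faithfulness of $\gamma\mapsto M_\gamma$ does not follow from reducedness, as you assert; it follows from Definition~\ref{pv-ring-def}(iii): since $\gamma$ is a $\delta$-$k$-algebra map and $R=k\{Z,1/\mathrm{det}(Z)\}_\delta$, the automorphism $\gamma$ is completely determined by $\gamma(Z)=ZM_\gamma$. These are local fixes; the overall architecture of your sketch is sound and coincides with the argument in the cited reference.
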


As in \cite[p.~337]{hardouin-singer:2008}, we observe that if $R$ is a $\sigma\delta$-PV ring over $k$ for \eqref{difeq1}, and $K$ is the total ring of fractions of $R$, then any $\sigma\delta$-$k$-automorphism of $K$ must leave $R$ invariant, whence the group $\mathrm{Gal}_{\sigma\delta}(K/k)$ of such automorphisms coincides with $\mathrm{Gal}_{\sigma\delta}(R/k)$. The consideration of the total ring of fractions of $R$ is necessary to obtain the following Galois correspondence.

\begin{thm} \label{correspondence} (Cf.~\cite[Thm.~2.7]{hardouin-singer:2008}) Suppose that $k^\sigma$ is $\delta$-closed, and that $R$ is a $\sigma\delta$-PV ring over $k$ for \eqref{difeq1}. Denote by $K$ the total ring of fractions of $R$, and by $\mathcal{F}$ the set of $\sigma\delta$-rings $F$ such that $k\subseteq F \subseteq K$ and every non-zero divisor in $F$ is a unit in $F$. Let $\mathcal{G}$ denote the set of linear differential algebraic subgroups $G$ of $\mathrm{Gal}_{\sigma\delta}(K/k)$. There is a bijective correspondence $\mathcal{F}\leftrightarrow\mathcal{G}$ given by \begin{gather*}F \mapsto \mathrm{Gal}_{\sigma\delta}(K/F)=\{\gamma\in\mathrm{Gal}_{\sigma\delta}(K/k) \ | \ \gamma(r)=r, \ \forall r\in F\}; \quad \text{and} \\ G \mapsto K^G=\{r\in K \ | \ \gamma(r)=r, \ \forall \gamma\in G\}.\end{gather*}\end{thm}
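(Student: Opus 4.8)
The plan is to verify that the two assignments are well defined, inclusion-reversing, and mutually inverse, the technical core being a ``no new invariants'' statement that rests on the $\delta$-closedness of $C:=k^\sigma$ together with the torsor structure of the $\sigma\delta$-PV ring. First I would dispose of the easy bookkeeping. For $G\in\mathcal{G}$ the fixed ring $K^G$ is visibly a $\sigma\delta$-subring of $K$ containing $k$, and since any non-zero-divisor of $K$ is a unit in $K$, a short argument shows $K^G$ inherits the property that its non-zero-divisors are invertible in $K^G$, so $K^G\in\mathcal{F}$. Conversely, for $F\in\mathcal{F}$ I would verify that $F\{Z,1/\mathrm{det}(Z)\}_\delta$ is a $\sigma\delta$-PV ring over $F$ with $K$ as its total ring of fractions, so that by the theorem cited as \cite[Thm.~2.6]{hardouin-singer:2008} the group $\mathrm{Gal}_{\sigma\delta}(K/F)$ is a linear differential algebraic subgroup of $\mathrm{GL}_n(C)$, hence lies in $\mathcal{G}$. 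That both maps reverse inclusions is immediate from the definitions, and the two easy containments $F\subseteq K^{\mathrm{Gal}_{\sigma\delta}(K/F)}$ and $G\subseteq\mathrm{Gal}_{\sigma\delta}(K/K^G)$ require nothing.

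The substance is then concentrated in the single identity $K^{\mathrm{Gal}_{\sigma\delta}(K/k)}=k$, asserting that the only elements of $K$ fixed by the whole Galois group lie in $k$; both round-trips reduce to this, the first by applying it to the relative extension $K/F$. To prove it I would first pass to the idempotent decomposition $R=R_0\oplus\dots\oplus R_{t-1}$ of Proposition~\ref{idemp}, reducing the analysis to the domain $R_0$, which is a $\sigma^t\delta$-PV ring, while tracking the finite cyclic part of the group that permutes the $R_i$. Writing a putative invariant as $a=p/q$ with $p,q\in R$, I would consider the element $a\otimes 1-1\otimes a$ inside $R\otimes_k R$ and invoke the torsor isomorphism $R\otimes_k R\cong R\otimes_C\mathcal{O}_\delta(G)$, where $\mathcal{O}_\delta(G)$ is the $\delta$-coordinate ring of $G$, under which invariance of $a$ under every $C$-point of $G$ translates into a $\delta$-polynomial condition on $\mathcal{O}_\delta(G)$.

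Here the $\delta$-closedness of $C$ does the decisive work: it guarantees that the $C$-points of $G$ are Kolchin-dense in $G$, so a section of $\mathcal{O}_\delta(G)$ vanishing at every $C$-point vanishes identically, which forces $a\otimes 1=1\otimes a$ and hence $a\in k$ by faithfully flat descent. For the remaining round-trip $\mathrm{Gal}_{\sigma\delta}(K/K^G)=G$ I would set $G'=\mathrm{Gal}_{\sigma\delta}(K/K^G)$, note that $K^{G'}=K^G$, and reduce to showing that two differential algebraic subgroups with the same fixed ring coincide; this follows by exhibiting, for any $g\in G'\setminus G$, an element of $K^G$ that $g$ moves, built from a semi-invariant $\delta$-polynomial in $\mathcal{O}_\delta(G)$ separating $g$ from $G$ and again using density of the $C$-points. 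I expect the main obstacle to be establishing the torsor isomorphism $R\otimes_k R\cong R\otimes_C\mathcal{O}_\delta(G)$ in the differential algebraic category and reconciling it with the idempotent decomposition, since this is precisely where the $\sigma\delta$-theory departs from both the purely differential and the purely difference cases.
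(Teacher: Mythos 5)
A preliminary point: the paper does not prove Theorem~\ref{correspondence} at all --- it is imported, ``Cf.'', from \cite[Thm.~2.7]{hardouin-singer:2008} --- so there is no internal proof to compare against; the relevant benchmark is the proof in Hardouin--Singer, which itself adapts \cite[Thm.~1.29]{vanderput-singer:1997}. Measured against that, your outline is a faithful reconstruction of the source's argument rather than a different route: both round-trips do hinge on the no-new-invariants statement plus a separation statement for subgroups; the idempotent decomposition of Proposition~\ref{idemp} makes $K$ a finite product of fields, which is exactly what makes your ``easy bookkeeping'' (e.g.\ that regular elements of $K^G$ are units in $K^G$) actually go through; and the torsor isomorphism $R\otimes_k R\simeq R\otimes_C\mathcal{O}_\delta(G)$ is precisely the device of \cite{hardouin-singer:2008}, where $\mathcal{O}_\delta(G)$ is realized as $(R\otimes_k R)^\sigma$ --- so the ``main obstacle'' you flag at the end is the ingredient the source establishes, not a new difficulty. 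Two places where your sketch underestimates the work. First, for $F\in\mathcal{F}$ with zero divisors (a pseudofield, not a field), the claim that $F\{Z,1/\det(Z)\}_\delta$ is a $\sigma\delta$-PV ring over $F$ with total quotient ring $K$ and $F^\sigma=C$ is not a routine verification: it is the reason \cite{hardouin-singer:2008} develop their Galois theory in \S 6 over such rings in the first place, and it is also needed to make your reduction of the first round-trip ``to the relative extension $K/F$'' legitimate. Second, for $\mathrm{Gal}_{\sigma\delta}(K/K^G)=G$, Kolchin density of $C$-points does no work on its own --- over a $\delta$-closed field $C$ the group simply \emph{is} its set of $C$-points --- and your phrase ``both round-trips reduce to'' $K^{\mathrm{Gal}_{\sigma\delta}(K/k)}=k$ is not accurate for this one: you need a Chevalley-type theorem for linear differential algebraic groups (available from Cassidy's theory \cite{cassidy:1972}) to manufacture the semi-invariant separating $g$ from $G$, after which the argument closes as you indicate. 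Neither point is a wrong turn; with those two steps supplied in full, your proposal coincides in substance with the proof in the cited source.
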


\noindent In particular, an element $r\in K$ is left fixed by all of $\mathrm{Gal}_{\sigma\delta}(K/k)$ if and only if $r\in k$.

The following result relates the $\sigma\delta$-PV rings and $\sigma\delta$-Galois groups of \cite{hardouin-singer:2008} to the $\sigma$-PV rings and $\sigma$-Galois groups considered in \cite{vanderput-singer:1997, hendriks:1998}.

\begin{prop} \label{dense} (Cf.~\cite[Prop.~2.8]{hardouin-singer:2008}) Assume $k^\sigma$ is $\delta$-closed. Let $R$ be a $\sigma\delta$-PV ring over $k$ for \eqref{difeq1} with fundamental solution matrix $Z\in\mathrm{GL}_n(R)$, and let $S=k[Z,1/\mathrm{det}(Z)]\subset R$. Then:
\begin{enumerate}
\item[(i)] $S$ is a $\sigma$-PV ring over $k$ for \eqref{difeq1}; and
\item[(ii)] $\mathrm{Gal}_{\sigma\delta}(R/k)$ is Zariski-dense in the $\sigma$-Galois group $\mathrm{Gal}_\sigma(S/k)$.
\end{enumerate}
\end{prop}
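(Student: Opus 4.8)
The plan is to handle the two assertions in turn, using (i) to give meaning to $\mathrm{Gal}_\sigma(S/k)$ in (ii). For (i), two of the three conditions defining a $\sigma$-PV ring (Definition~\ref{pv-ring-def} specialized to $\delta=0$) hold for $S=k[Z,1/\mathrm{det}(Z)]$ by construction: $Z$ is a fundamental solution lying in $\mathrm{GL}_n(S)$ because $1/\mathrm{det}(Z)\in S$, and $S$ is generated as a $k$-algebra by the entries of $Z$ and $1/\mathrm{det}(Z)$. The whole content of (i) is thus that $S$ is a \emph{simple} $\sigma$-ring. For (ii), I would realize $\mathrm{Gal}_{\sigma\delta}(R/k)$ as a subgroup of $\mathrm{Gal}_\sigma(S/k)$ compatibly with the matrix representations into $\mathrm{GL}_n(k^\sigma)$, and then deduce Zariski density by playing the $\sigma\delta$-Galois correspondence (Theorem~\ref{correspondence}) against the $\sigma$-Galois correspondence of \cite{vanderput-singer:1997}.

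To prove that $S$ is $\sigma$-simple, first observe that $\sigma$ restricts to an automorphism of $S$, since $\sigma(Z)=AZ$ and $\sigma^{-1}(Z)=\sigma^{-1}(A^{-1})Z$ have entries in $S$ and $\mathrm{det}(A)\in k^\times$. Let $I$ be a nonzero $\sigma$-stable ideal of $S$; as $S$ is a finitely generated $k$-algebra, hence Noetherian, we may assume $\sigma(I)=I$. Write $[I]_R$ for the $\delta$-ideal of $R$ generated by $I$, i.e.\ the ordinary ideal generated by $\{\delta^j(a)\mid a\in I,\ j\ge0\}$. Because $\sigma$ and $\delta$ commute and $\sigma(I)=I$, this generating set is permuted by $\sigma$, so $[I]_R$ is stable under both $\sigma$ and $\delta$; since $R$ is $\sigma\delta$-simple and $0\ne I\subseteq[I]_R$, we conclude $[I]_R=R$. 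It then remains to deduce $I=S$, that is, to rule out that $\delta$-saturating a \emph{proper} $\sigma$-ideal of $S$ swells it to all of $R$; I expect this to be the main obstacle. The mechanism I would exploit is that any $\delta$-$k$-algebra homomorphism $\phi$ from $R$ to a $\delta$-field with $\phi(I)=0$ would force $1=\phi(1)=\sum_j\phi(r_j)\,\delta^{e_j}(\phi(a_j))=0$; producing such a $\phi$ is where I would use that $R$ is reduced (Cf.~\cite[Thm.~2.6]{hardouin-singer:2008}) and finitely $\delta$-generated, the domain decomposition $R=R_0\oplus\dots\oplus R_{t-1}$ of Proposition~\ref{idemp} (whose idempotents $e_i$ are $\delta$-constants in characteristic zero, so the problem descends to the domain $\sigma^t\delta$-PV factors), and the hypothesis that $k^\sigma$ is $\delta$-closed. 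It is precisely the $\sigma$-stability of $I$—which fails for arbitrary ideals—that prevents the saturation from filling $R$, exactly as in \cite[Prop.~2.8]{hardouin-singer:2008}.

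Granting (i), $H:=\mathrm{Gal}_\sigma(S/k)$ is a genuine $\sigma$-Galois group, a Zariski-closed subgroup of $\mathrm{GL}_n(k^\sigma)$ (note $k^\sigma$ is $\delta$-closed, hence algebraically closed, so the theory of \cite{vanderput-singer:1997} applies). Each $\gamma\in\mathrm{Gal}_{\sigma\delta}(R/k)$ satisfies $\gamma(Z)=ZM_\gamma$ with $M_\gamma\in\mathrm{GL}_n(k^\sigma)$, so $\gamma$ sends the entries of $Z$ and $1/\mathrm{det}(Z)$ back into $S$; hence $\gamma(S)=S$ and $\gamma|_S\in H$. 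This gives a homomorphism $\rho\colon\mathrm{Gal}_{\sigma\delta}(R/k)\to H$ realizing both groups in $\mathrm{GL}_n(k^\sigma)$ through the \emph{same} matrix $M_\gamma$, injective because $\gamma|_S=\mathrm{id}$ forces $M_\gamma=I$, hence $\gamma(Z)=Z$, hence $\gamma=\mathrm{id}$ ($R$ being $\delta$-generated by $Z$). Put $G=\rho(\mathrm{Gal}_{\sigma\delta}(R/k))\subseteq H$ and let $\overline G$ be its Zariski closure in $H$. For each $s\in S$ the condition $M\cdot s=s$ is Zariski-closed in $M\in H$ (it amounts to polynomial identities in the entries of $M$, since $M$ acts by $Z\mapsto ZM$), so the $\overline G$- and $G$-invariants of $S$ coincide: $S^{\overline G}=S^{G}$. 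Now $S^{G}=k$, because an element of $S\subseteq K$ fixed by all of $\mathrm{Gal}_{\sigma\delta}(R/k)$ lies in $k$ by the remark following Theorem~\ref{correspondence}; the identical computation in the total ring of fractions of $S$ gives $k$ as well, and $S^{H}=k$ by the $\sigma$-Galois correspondence applied to the full group $H$. Thus the Zariski-closed subgroups $\overline G$ and $H$ have the same invariants, so the injectivity of that correspondence on Zariski-closed subgroups forces $\overline G=H$, which is precisely the asserted density.
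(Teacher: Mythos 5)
First, the point of comparison: the paper does not actually prove Proposition~\ref{dense} --- it is imported verbatim from \cite[Prop.~2.8]{hardouin-singer:2008} --- so the benchmark is Hardouin--Singer's proof. Your part (ii) is essentially that argument and is sound: restriction to $S$ gives an injection $\mathrm{Gal}_{\sigma\delta}(R/k)\hookrightarrow\mathrm{Gal}_\sigma(S/k)$ realized by the same matrices $M_\gamma$, the fixed ring of $G$ and of its Zariski closure $\overline{G}$ coincide because fixing an element is a Zariski-closed condition, the remark after Theorem~\ref{correspondence} gives $k$ as the $G$-invariants, and injectivity of the van der Put--Singer correspondence forces $\overline{G}=H$. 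The one step you dispatch in a single clause deserves more: the correspondence of \cite{vanderput-singer:1997} pairs closed subgroups of $H$ with subrings of the \emph{total quotient ring} of $S$, so you must know that non-zero-divisors of $S$ remain non-zero-divisors in $R$ (so that this total quotient ring embeds in $K$ and Theorem~\ref{correspondence} applies to it); this requires comparing the idempotent decompositions of $S$ and of $R$ from Proposition~\ref{idemp} and is not automatic, though it is standard.

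The genuine gap is in (i), and you flagged it yourself. As you correctly observe, conditions (ii) and (iii) of Definition~\ref{pv-ring-def} are immediate for $S$, so the entire content of (i) is $\sigma$-simplicity; but your argument establishes only the easy half, namely that a nonzero $\sigma$-stable ideal $I\subseteq S$ has $[I]_R=R$ by $\sigma\delta$-simplicity of $R$. The needed converse half --- that a \emph{proper} $\sigma$-stable ideal of $S$ generates a \emph{proper} $\delta$-ideal of $R$ --- is exactly where all the work in \cite[Prop.~2.8(1)]{hardouin-singer:2008} lies, and your proposal does not supply it. Your ``mechanism'' --- a $\delta$-$k$-algebra homomorphism $\phi$ from $R$ to a $\delta$-field with $\phi(I)=0$ --- is not a route to the claim but a reformulation of it: such a $\phi$ exists if and only if $[I]_R$ is contained in a proper $\delta$-ideal of $R$ (take a maximal $\delta$-ideal containing $[I]_R$; its quotient is a domain in characteristic zero, and one passes to the fraction field), i.e., if and only if $[I]_R\neq R$, which is the very statement at issue. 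Moreover, nothing in the sketch actually consumes the hypotheses you list: the construction of $\phi$ as described sees only the $\delta$-structure and never uses $\sigma$-stability of $I$ --- which, by your own (correct) remark, is indispensable --- nor the $\delta$-closedness of $k^\sigma$, which any correct proof must route in (e.g.\ through $R^\sigma=k^\sigma$ and a constants argument, as Hardouin--Singer do). So part (i), and with it the group $H$ that part (ii) presupposes, remains unproven.
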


The following result characterizes those difference equations whose $\sigma\delta$-Galois groups are $\delta$-constant.

\begin{prop} \label{dconst} (Cf.~\cite[Prop.~2.9]{hardouin-singer:2008}) Let $R$ be a $\sigma\delta$-PV ring over $k$ for $\sigma(Y)=AY$, where $A\in\mathrm{GL}_n(k)$ and $k^\sigma$ is $\delta$-closed. Then $\mathrm{Gal}_{\sigma\delta}(R/k)$ is a $\delta$-constant linear differential algebraic group if and only if there exists a matrix $B\in\mathfrak{gl}_n(k)$ such that $$\sigma(B)=ABA^{-1} +\delta(A)A^{-1}.$$ In this case, there exists a fundamental solution matrix $Z\in\mathrm{GL}_n(R)$ that satisfies the system $$ \begin{cases} \sigma(Z)  =AZ; \\ \delta(Z) = BZ.\end{cases}$$
\end{prop}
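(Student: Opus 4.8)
The plan is to exploit the relationship between $\delta$-constancy of the Galois group and the existence of a fundamental solution matrix whose logarithmic $\delta$-derivative $\delta(Z)Z^{-1}$ lies in $\mathfrak{gl}_n(k)$. First I would record a computation valid for \emph{any} fundamental solution matrix $Z$ with $\sigma(Z)=AZ$: writing $N:=\delta(Z)Z^{-1}\in\mathfrak{gl}_n(R)$ and applying $\delta$ to $\sigma(Z)=AZ$ together with $\sigma\delta=\delta\sigma$, one obtains $\sigma(N)=\delta(A)A^{-1}+ANA^{-1}$. Thus $N$ automatically satisfies the asserted identity, and the real content is to control when such an $N$ can be arranged to lie in $\mathfrak{gl}_n(k)$. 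A parallel computation, using $\gamma(Z)=ZM_\gamma$ for $\gamma\in\mathrm{Gal}_{\sigma\delta}(R/k)$ with $M_\gamma\in\mathrm{GL}_n(k^\sigma)$, shows $\gamma(N)=N+Z\,\delta(M_\gamma)M_\gamma^{-1}Z^{-1}$; hence $N$ is fixed by $\gamma$ precisely when $\delta(M_\gamma)=0$.

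For the forward implication, I would use that $\delta$-constancy furnishes $P\in\mathrm{GL}_n(k^\sigma)$ with $P^{-1}M_\gamma P\in\mathrm{GL}_n(k^{\sigma\delta})$ for all $\gamma$. Replacing $Z$ by the fundamental solution matrix $ZP$ (still solving $\sigma(Y)=AY$ since $\sigma(P)=P$) makes every representing matrix $\delta$-constant, so the computation above gives $\gamma(N)=N$ for every $\gamma$. By the observation following Theorem~\ref{correspondence}, that an element of $K$ fixed by all of $\mathrm{Gal}_{\sigma\delta}(K/k)$ lies in $k$, the entries of $N$ lie in $k$, and $B:=N\in\mathfrak{gl}_n(k)$ is the desired matrix; by construction $\sigma(Z)=AZ$ and $\delta(Z)=BZ$.

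The harder direction is the converse, and the key device is to descend an auxiliary linear $\delta$-system to $k^\sigma$ using $\delta$-closedness. Starting from any fundamental solution $Z_0$ with $N_0:=\delta(Z_0)Z_0^{-1}$, I set $M:=N_0-B$, which satisfies $\sigma(M)=AMA^{-1}$ because $N_0$ and $B$ satisfy the same identity. The crucial point is that $D:=-Z_0^{-1}MZ_0$ is $\sigma$-invariant: substituting $\sigma(Z_0)=AZ_0$ and $\sigma(M)=AMA^{-1}$ collapses the conjugating factors, so $D\in\mathfrak{gl}_n(R^\sigma)=\mathfrak{gl}_n(k^\sigma)$ by Theorem~\ref{nnc}. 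Since $k^\sigma$ is $\delta$-closed, the consistent linear system $\delta(C)=DC$, $\det(C)\neq 0$ (encodable as $\delta$-polynomial equations solvable in a Picard--Vessiot extension of $k^\sigma$) admits a solution $C\in\mathrm{GL}_n(k^\sigma)$. Then $Z:=Z_0C$ is again a fundamental solution matrix for $\sigma(Y)=AY$ (as $\sigma(C)=C$, and $C\in\mathrm{GL}_n(k)$ forces $k\{Z,1/\det Z\}_\delta=R$), and the logarithmic-derivative computation yields $\delta(Z)Z^{-1}=N_0+Z_0DZ_0^{-1}=N_0-M=B$. Finally, differentiating $\gamma(Z)=ZM_\gamma$ and using $\delta(Z)=BZ$ with $B\in\mathfrak{gl}_n(k)$ forces $\delta(M_\gamma)=0$, so every representing matrix lies in $\mathrm{GL}_n(k^{\sigma\delta})$ and $\mathrm{Gal}_{\sigma\delta}(R/k)$ is $\delta$-constant. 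I expect the main obstacle to be verifying the $\sigma$-invariance of $D$ and justifying the descent of $\delta(C)=DC$ to $k^\sigma$ via $\delta$-closedness; the remainder is bookkeeping with logarithmic derivatives.
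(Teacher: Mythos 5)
Your proof is correct and complete: the logarithmic-derivative identities $\sigma(N)=\delta(A)A^{-1}+ANA^{-1}$ and $\gamma(N)=N+Z\delta(M_\gamma)M_\gamma^{-1}Z^{-1}$, the descent of $N$ to $\mathfrak{gl}_n(k)$ via the remark after Theorem~\ref{correspondence}, the $\sigma$-invariance of $D=-Z_0^{-1}MZ_0$ with $R^\sigma=k^\sigma$ from Theorem~\ref{nnc}, and the use of $\delta$-closedness to solve $\delta(C)=DC$ in $\mathrm{GL}_n(k^\sigma)$ (with $\det(C)\neq 0$ encoded by an auxiliary variable $t$ with $t\det(C)=1$) all check out. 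Note that the paper itself offers no proof of Proposition~\ref{dconst}, quoting it from \cite[Prop.~2.9]{hardouin-singer:2008}; your argument reconstructs essentially the standard proof given in that source.
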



\section{Differential relations among solutions of first-order $q$-difference equations}\label{order1-sec}

In this section we recall some known results, and prove some new ones, concerning differential relations among solutions of first-order $q$-difference difference equations. The following result is proved in \cite[Prop.~3.1]{hardouin-singer:2008}.

\begin{prop}\label{diffsum} Let $R$ be a $\sigma\delta$-$k$-algebra with $R^\sigma=k^\sigma$. Suppose $b_1,\dots,b_m\in k$ and $z_1,\dots,z_m\in R$ satisfy $$\sigma(z_i)-z_i=b_i; \quad i=1,\dots,m.$$ Then $z_1,\dots,z_m$ are differentially dependent over $k$ if and only if there exists a nonzero linear $\delta$-polynomial $\mathcal{L}(Y_1,\dots,Y_m)$ with coefficients in $k^\sigma$ and an element $f\in k$ such that $$\mathcal{L}(b_1,\dots,b_m)=\sigma(f)-f.$$\end{prop}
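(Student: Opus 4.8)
The statement splits into two implications; the reverse one is a short computation while the forward one carries the real content. For sufficiency, suppose $\mathcal{L}=\sum_{i,j}c_{i,j}\delta^j(Y_i)$ with $c_{i,j}\in k^\sigma$ and $f\in k$ satisfy $\mathcal{L}(b_1,\dots,b_m)=\sigma(f)-f$. Applying $\delta^j$ to $\sigma(z_i)-z_i=b_i$ and using $\sigma\delta=\delta\sigma$ gives $\sigma(\delta^j z_i)-\delta^j z_i=\delta^j(b_i)$. Since the $c_{i,j}$ are $\sigma$-invariant, applying $\sigma-\mathrm{id}$ to $\mathcal{L}(z_1,\dots,z_m)=\sum_{i,j}c_{i,j}\delta^j(z_i)$ yields $\sigma(\mathcal{L}(z_1,\dots,z_m))-\mathcal{L}(z_1,\dots,z_m)=\mathcal{L}(b_1,\dots,b_m)=\sigma(f)-f$. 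Hence $\mathcal{L}(z_1,\dots,z_m)-f\in R^\sigma=k^\sigma\subseteq k$, so $\mathcal{L}(z_1,\dots,z_m)\in k$; as $\mathcal{L}\neq 0$ this is a nontrivial linear $\delta$-relation over $k$, proving $\delta$-dependence.

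For necessity I would set up the following. Abbreviate $w_{i,j}=\delta^j(z_i)$ and $\beta_{i,j}=\delta^j(b_i)\in k$, so that $\sigma(w_{i,j})=w_{i,j}+\beta_{i,j}$. Any $\delta$-dependence is an ordinary polynomial relation among finitely many $w_{i,j}$, so I work in $k[\mathbf{X}]$, where $\mathbf{X}=(X_{i,j})$ are indeterminates standing for the jet variables $\delta^j(Y_i)$, and let $I\subseteq k[\mathbf{X}]$ be the ideal of all $P$ with $P(\mathbf{w})=0$; by hypothesis $I\neq 0$. For $P\in k[\mathbf{X}]$ let $P^\sigma$ denote $\sigma$ applied to its coefficients, and define $\tilde{\sigma}(P)$ by the substitution $X_{i,j}\mapsto X_{i,j}+\beta_{i,j}$ in $P^\sigma$. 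Because $\sigma(P(\mathbf{w}))=P^\sigma(\sigma(\mathbf{w}))=\tilde{\sigma}(P)(\mathbf{w})$, the operator $\tilde{\sigma}$ preserves $I$; it also preserves total degree, and since the shift is degree-lowering its effect on the top-degree form $P^{\mathrm{top}}$ is $P^{\mathrm{top}}\mapsto(P^{\mathrm{top}})^\sigma$.

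The crux is a minimality argument. I would choose $0\neq P\in I$ of least total degree $d$ (necessarily $d\geq 1$), then among these with the fewest monomials in $P^{\mathrm{top}}$, normalized so one coefficient of $P^{\mathrm{top}}$ equals $1$. Setting $\Delta(P)=\tilde{\sigma}(P)-P\in I$, its top form $(P^{\mathrm{top}})^\sigma-P^{\mathrm{top}}$ has coefficient $\sigma(1)-1=0$ at the normalized monomial, so $\Delta(P)$ would have strictly fewer top monomials if its degree is still $d$, and degree $<d$ otherwise; either case contradicts the chosen minimality unless $\Delta(P)=0$. Thus $\tilde{\sigma}(P)=P$, i.e. the identity $P^\sigma(X_{i,j}+\beta_{i,j})=P(\mathbf{X})$ holds in $k[\mathbf{X}]$ and $P^{\mathrm{top}}$ has coefficients in $k^\sigma$. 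Comparing the degree-$(d-1)$ homogeneous parts of this identity, and using $(P^{\mathrm{top}})^\sigma=P^{\mathrm{top}}$, gives $\sum_{i,j}\beta_{i,j}\,\partial_{X_{i,j}}P^{\mathrm{top}}=P_{d-1}-(P_{d-1})^\sigma$, where $P_{d-1}$ is the degree-$(d-1)$ part of $P$. Since $\deg P^{\mathrm{top}}=d\geq 1$, some $\partial_{X_{i,j}}P^{\mathrm{top}}$ is a nonzero form with coefficients in $k^\sigma$; reading off the coefficient of a suitable monomial $\mathbf{X}^\mu$ on both sides yields $\sum_{i,j}\lambda_{i,j}\beta_{i,j}=g-\sigma(g)$ with $\lambda_{i,j}\in k^\sigma$ not all zero and $g\in k$. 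Taking $\mathcal{L}=\sum_{i,j}\lambda_{i,j}\delta^j(Y_i)$ and $f=-g$ then produces a nonzero linear $\delta$-polynomial over $k^\sigma$ with $\mathcal{L}(b_1,\dots,b_m)=\sigma(f)-f$.

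The hard part is precisely this crux step. The difficulty is twofold: first, setting up the ranking (degree, then number of top monomials, with one leading coefficient normalized) so that the $\sigma$-shift operator \emph{annihilates} $P$ rather than merely lowering its degree; and second, guaranteeing that the relation read off from the subleading terms is genuinely nontrivial. The latter is exactly where I need $d\geq 1$ together with the $\sigma$-invariance of $P^{\mathrm{top}}$, which jointly force some $\partial_{X_{i,j}}P^{\mathrm{top}}$ to be a nonzero constant-coefficient form and hence some $\lambda_{i,j}\neq 0$. I expect the bookkeeping of which monomial $\mathbf{X}^\mu$ to select, and the verification that the substitution operator $\tilde\sigma$ interacts correctly with total degree and with evaluation at $\mathbf{w}$, to be the only genuinely technical points.
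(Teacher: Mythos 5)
Your proof is correct: the sufficiency computation is exactly right, and the necessity argument---the twisted substitution operator $\tilde\sigma$ on the jet polynomial ring, the choice of $P\in I$ minimal first in total degree and then in number of top-degree monomials with one coefficient normalized to $1$, which forces $\tilde\sigma(P)=P$, followed by extraction of the linear relation from the degree-$(d-1)$ homogeneous part---is sound, including the appeal to characteristic zero to guarantee some $\partial_{X_{i,j}}P^{\mathrm{top}}\neq 0$ and the observation that its coefficients lie in $k^\sigma$. Note that the paper itself contains no proof of this statement (it quotes it from \cite[Prop.~3.1]{hardouin-singer:2008}), and your argument is essentially the same classical minimal-degree $\sigma$-shift argument used there, with the hypothesis $R^\sigma=k^\sigma$ entering only in the sufficiency direction, as it should.
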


For the remainder of this section, we restrict our attention to the $\sigma\delta$-field $k=C(x)$, where $\delta(x)=x$, $C$ is a $\delta$-closed field of characteristic zero, and $\sigma$ is the $C$-linear automorphism of $k$ defined by setting $\sigma(x)=qx$ for some $q\in C^\delta$ that is neither zero nor a root of unity.

The following notion of $q$-discrete residue, defined in \cite[Def.~2.7]{chen-singer:2012},will be crucial in several proofs in this paper.

\begin{defn} \label{DEF:qres} For any non-zero $\beta\in C$, we call the subset \[[\beta]_q=\beta q^\mathbb{Z}=\{\beta q^\ell \ | \ell\in\mathbb{Z}\}\subset  C\] the $q^\mathbb{Z}$-orbit of $\beta$ in $C$. Any $f\in k$ can be decomposed into the form \[f=c+xp_1+\frac{p_2}{x^s}+\sum_{i=1}^m\sum_{j=1}^{n_i}\sum_{\ell=0}^{d_{i,j}}\frac{\alpha_{i,j,\ell}}{(x-\beta_iq^\ell)^j},\] where $p_1,p_2\in C[x]$; $s,m,n_i,d_{i,j}\in\mathbb{N}$; $c,\alpha_{i,j,\ell},\beta_i\in C$; $\mathrm{deg}(p_2)<s$; and the $\beta_i$ are non-zero and belong to distinct $q^\mathbb{Z}$-orbits.

The $q$-\emph{discrete residue} of $f$ at the $q^\mathbb{Z}$-orbit $[\beta_i]_q$ of multiplicity $j$ (with respect to $x$) is defined as: \[q\text{-}\mathrm{dres} (f,[\beta_i]_q,j)=\sum_{\ell=0}^{d_{i,j}}q^{-\ell j}\alpha_{i,j,\ell}.\]

In addition, the constant $c$ above is the $q$-\emph{discrete residue} of $f$ at infinity, which we denote by $q$-$\mathrm{dres} (f,\infty)$.
\end{defn}

The usefulness of the notion of discrete residue stems from the following result.

\begin{prop}\label{indsum}(Cf.~\cite[Prop.~2.10]{chen-singer:2012})

Let $f,g\in C[x]$ be non-zero, relatively prime polynomials. There exists $h\in k$ such that $\sigma(h)-h=f/g$ if and only if $q$-$\mathrm{dres} (f/g,\infty)=0$ and $q$-$\mathrm{dres} (f/g,[\beta]_q,j)=0$ for every $j\in\mathbb{N}$ and every $0\neq \beta\in C$ such that $g(\beta)=0$.
\end{prop}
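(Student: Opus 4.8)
\section*{Proof proposal}

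The plan is to exploit the canonical partial fraction decomposition of Definition~\ref{DEF:qres} to write $C(x)$, as a $C$-vector space, as a direct sum of subspaces each of which is stable under $\sigma$, and then to solve $\sigma(h)-h = f/g$ one block at a time. Writing $r = f/g$ in the form of Definition~\ref{DEF:qres}, one checks that $\sigma$ sends this canonical form to one of the same shape: it fixes the constant $c$, scales each power $x^n$ by $q^n$, preserves the principal part $p_2/x^s$ at the origin, and within the sum it sends $(x-\beta_i q^\ell)^{-j}$ to $q^{-j}(x-\beta_i q^{\ell-1})^{-j}$, thereby preserving each $q^{\mathbb{Z}}$-orbit $[\beta_i]_q$ and each pole order $j$. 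Hence $\sigma - 1$ preserves the constant line $C\cdot 1$, each monomial line $C\cdot x^n$ with $n\neq 0$, and each block $V_{[\beta],j}=\bigoplus_{\ell\in\mathbb{Z}} C\cdot(x-\beta q^\ell)^{-j}$ attached to a nonzero orbit and a multiplicity. Since these subspaces span $C(x)$ and are $(\sigma-1)$-stable, the equation $\sigma(h)-h=r$ is solvable if and only if it is solvable after projecting both sides onto each block.

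The easy blocks are disposed of first. On each monomial line the equation is always solvable: because $q$ is not a root of unity, $q^n\neq 1$ for $n\neq 0$, so $(\sigma-1)(x^n)=(q^n-1)x^n$ and $\sigma-1$ is invertible there. This handles the polynomial part $xp_1$ together with the principal part $p_2/x^s$ at the origin (the pole at $0$, though fixed by $x\mapsto qx$, still contributes $q^{-j}\neq 1$), which is why no condition appears at $\beta=0$. On the constant line $\sigma-1$ vanishes identically, so solvability forces the constant component of $r$, namely $q\text{-}\mathrm{dres}(f/g,\infty)$, to be zero, and conversely this is the only obstruction there.

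The heart of the argument is the analysis on a single orbit block $V_{[\beta],j}$. Writing $u_\ell=(x-\beta q^\ell)^{-j}$, the relation $\sigma(u_\ell)=q^{-j}u_{\ell-1}$ gives, for a finite sum $h=\sum_\ell c_\ell u_\ell$, that $(\sigma-1)(h)=\sum_\ell(q^{-j}c_{\ell+1}-c_\ell)u_\ell$. Matching this against the orbit component $\sum_\ell \alpha_\ell u_\ell$ of $r$ yields the first-order recurrence $c_{\ell+1}=q^j(c_\ell+\alpha_\ell)$. I would detect the obstruction through the linear functional $\rho(\sum_\ell\alpha_\ell u_\ell)=\sum_\ell q^{-\ell j}\alpha_\ell$, which is exactly $q\text{-}\mathrm{dres}(f/g,[\beta]_q,j)$: a one-line telescoping reindexing shows $\rho\bigl((\sigma-1)(h)\bigr)=0$, so $\rho$ annihilates the image of $\sigma-1$ on $V_{[\beta],j}$, giving necessity. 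For sufficiency, assuming $\rho=0$ I would set $c_\ell=0$ below the support of $(\alpha_\ell)$ and iterate the recurrence forward, obtaining the closed form $c_\ell=\sum_{m<\ell}q^{(\ell-m)j}\alpha_m$; past the support this equals $q^{\ell j}\rho$, so the vanishing of $\rho$ is precisely what forces $(c_\ell)$ back to $0$, making $h$ a finite sum inside $V_{[\beta],j}$.

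The main obstacle I anticipate is verifying cleanly that the single functional $\rho$ simultaneously computes the $q$-discrete residue and exactly cuts out the image of $\sigma-1$ on each orbit block, in particular confirming through the closed form above that the forward iteration terminates if and only if $\sum_\ell q^{-\ell j}\alpha_\ell=0$. Everything else is bookkeeping. Assembling the per-block conclusions then yields the stated criterion: $h$ exists if and only if $q\text{-}\mathrm{dres}(f/g,\infty)=0$ and $q\text{-}\mathrm{dres}(f/g,[\beta]_q,j)=0$ for every $j\in\mathbb{N}$ and every nonzero root $\beta$ of $g$.
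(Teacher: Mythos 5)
Your proof is correct: the paper itself gives no argument for this proposition, simply citing \cite[Prop.~2.10]{chen-singer:2012}, and your orbit-by-orbit partial-fraction decomposition with the telescoping functional $\rho$ and the explicit recurrence $c_{\ell+1}=q^j(c_\ell+\alpha_\ell)$ is essentially the same argument as in that cited source. The one cosmetic remark is that $\rho$ agrees with $q\text{-}\mathrm{dres}(f/g,[\beta]_q,j)$ only up to the nonzero factor $q^{mj}$ coming from the choice of orbit representative, which is harmless since only its vanishing matters.
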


The following computational lemma will be used to sharpen the conclusion of \cite[Cor.~3.3]{hardouin-singer:2008} in the following Corollary~\ref{diffprod}.

\begin{lem} \label{qres-computation} Suppose $0\neq a\in C^\delta(x)$, $r\in\mathbb{Z}_{\geq 0}$, and $0\neq\beta\in C^\delta$ is a zero or pole of $a$. Then \[q\text{-}\mathrm{dres} \left(\delta^r\left(\frac{\delta(a)}{a}\right),[\beta]_q,r+1\right)=(-1)^r\cdot r!\cdot\beta^r\cdot q\text{-}\mathrm{dres} \left(\frac{\delta(a)}{a},[\beta]_q,1\right).\]
\end{lem}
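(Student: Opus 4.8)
The plan is to localize everything at the orbit $[\beta]_q$ and reduce the computation to understanding how $\delta$ acts on a single simple pole. First I would record the shape of the logarithmic derivative: since $a\in C^\delta(x)$ is nonzero, we may factor it (over an algebraic closure of $C^\delta$) as a constant times a product of powers $(x-\gamma)^{\mathrm{ord}_\gamma(a)}$, so that $\frac{\delta(a)}{a}=x\frac{a'}{a}$ has only \emph{simple} poles, located precisely at the zeros and poles of $a$, with ordinary residue $\gamma\cdot\mathrm{ord}_\gamma(a)$ at each such $\gamma$. Collecting only the terms whose poles lie in $[\beta]_q$, and writing $m_\ell=\mathrm{ord}_{\beta q^\ell}(a)\in\mathbb{Z}$ (nonzero for only finitely many $\ell$, with each relevant point $\beta q^\ell$ lying in $C^\delta$ since $\beta,q\in C^\delta$), I would obtain
$$\frac{\delta(a)}{a}=\sum_\ell \frac{m_\ell\,\beta q^\ell}{x-\beta q^\ell}+(\text{a constant and terms with poles outside }[\beta]_q).$$
Reading off the multiplicity-one residue directly from Definition~\ref{DEF:qres} then gives $q\text{-}\mathrm{dres}\bigl(\tfrac{\delta(a)}{a},[\beta]_q,1\bigr)=\sum_\ell q^{-\ell}\cdot m_\ell\beta q^\ell=\beta\sum_\ell m_\ell$.

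The heart of the argument is a self-contained computation of the most singular term produced by $\delta^r$ on a single simple pole. Substituting $u=x-\gamma$ and writing $\delta=(u+\gamma)\frac{d}{du}$, I claim that the order-$(r+1)$ part of $\delta^r\bigl(\tfrac{1}{x-\gamma}\bigr)$ equals $\tfrac{(-1)^r\,r!\,\gamma^r}{(x-\gamma)^{r+1}}$. I would prove this by induction on $r$: if $c_r u^{-(r+1)}$ is the leading term after $r$ steps, then $\delta\bigl(c_r u^{-(r+1)}\bigr)=(u+\gamma)\cdot c_r\cdot(-(r+1))u^{-(r+2)}$ has leading term $-(r+1)\gamma c_r u^{-(r+2)}$, giving the recursion $c_{r+1}=-(r+1)\gamma\,c_r$ with $c_0=1$, whence $c_r=(-1)^r r!\gamma^r$. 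The key bookkeeping point is that $\delta=x\frac{d}{dx}$ introduces no new finite poles, and that a pole term of order $\le r$ can, under a single application of $\delta$, produce a pole of order at most $r+1$ only through the factor $x$ acting as multiplication by the constant $\gamma$ in the shifted variable, so the coefficient of $(x-\gamma)^{-(r+1)}$ in $\delta^r$ of the full simple-pole term is governed solely by this leading computation.

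Applying this with $\gamma=\beta q^\ell$ to each term of the localized decomposition above --- and using that distinct $\ell$ give poles at distinct points, so the orbit terms do not interfere --- the coefficient of $(x-\beta q^\ell)^{-(r+1)}$ in $\delta^r\bigl(\tfrac{\delta(a)}{a}\bigr)$ equals $m_\ell\beta q^\ell\cdot(-1)^r r!(\beta q^\ell)^r=(-1)^r r!\,m_\ell(\beta q^\ell)^{r+1}$. Feeding these coefficients into Definition~\ref{DEF:qres} then gives
$$q\text{-}\mathrm{dres}\!\left(\delta^r\!\left(\tfrac{\delta(a)}{a}\right),[\beta]_q,r+1\right)=\sum_\ell q^{-\ell(r+1)}(-1)^r r!\,m_\ell(\beta q^\ell)^{r+1}=(-1)^r r!\,\beta^{r+1}\sum_\ell m_\ell,$$
and comparing with the multiplicity-one residue from the first paragraph yields the claimed identity, since $(-1)^r r!\,\beta^{r+1}\sum_\ell m_\ell=(-1)^r r!\,\beta^r\cdot q\text{-}\mathrm{dres}\bigl(\tfrac{\delta(a)}{a},[\beta]_q,1\bigr)$.

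I expect the main obstacle to be the bookkeeping in the second paragraph: carefully justifying that only the order-$(r+1)$ leading term of each simple pole survives into the multiplicity-$(r+1)$ residue, and that the factor $x$ in $\delta$ neither creates spurious poles nor couples different points of the orbit $[\beta]_q$.
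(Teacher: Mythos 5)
Your proof is correct and follows essentially the same route as the paper's: both reduce via partial fractions to the decomposition $\frac{\delta(a)}{a}=\sum_\ell\bigl(m_\ell+\frac{m_\ell\beta q^\ell}{x-\beta q^\ell}\bigr)$ with integer residues coming from the orders of vanishing of $a$, and both compute by induction on $r$ that the top-order part of $\delta^r$ applied to a simple pole at $\gamma$ is $\frac{(-1)^r r!\,\gamma^r\cdot(\text{residue})}{(x-\gamma)^{r+1}}$, then sum over the orbit using Definition~\ref{DEF:qres}. Your localization at a single pole via $u=x-\gamma$ is just a cleaner packaging of the paper's induction over the whole sum, so there is nothing to flag.
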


\begin{proof} We may assume without loss of generality that 
\begin{equation}\label{qres-comp1}
\frac{\delta(a)}{a}=\sum_{\ell=0}^d\left(e_\ell + \frac{e_\ell q^\ell\beta}{x-\beta q^\ell}\right)\end{equation}
for some $0\neq\beta\in C^\delta$, $d\in\mathbb{Z}_{\geq 0}$, and $e_\ell\in\mathbb{Z}$ for $\ell=0,\dots,d$. Observe that in this case $q\text{-}\mathrm{dres} \bigl(\frac{\delta(a)}{a},[\beta]_q,1\bigr)=\sum_{\ell=0}^d\beta e_\ell$, by Definition~\ref{DEF:qres}. We claim that
\begin{equation}\label{qres-comp2}\delta^r\left(\frac{\delta(a)}{a}\right)=\sum_{\ell=0}^d\frac{(-1)^rr!q^{\ell (r+1)}\beta^{r+1}e_\ell}{(x-\beta q^\ell)^{r+1}} + (\text{lower-order terms}),\end{equation}
which would indeed imply that \[q\text{-}\mathrm{dres} \left(\delta^r\left(\frac{\delta(a)}{a}\right),[\beta]_q,r+1\right)=\sum_{\ell=0}^d(-1)^rr!\beta^{r+1}e_\ell\] and conclude the proof of the Lemma. We prove \eqref{qres-comp2} by induction. The case $r=0$ is just \eqref{qres-comp1}. Assuming \eqref{qres-comp2} for some $r\geq 0$, note that \[\delta^{r+1}\left(\frac{\delta(a)}{a}\right)=\sum_{\ell=0}^d\frac{(-1)^{r+1}(r+1)!q^{\ell(r+1)}\beta^{r+1}e_\ell\bigl((x-\beta q^\ell) + \beta q^\ell\bigr)}{(x-\beta q^\ell)^{r+2}} + (\text{lower-order terms}).\] This concludes the proof of the claim, and the Lemma.\end{proof}

The following result sharpens the conclusion of \cite[Cor.~3.3]{hardouin-singer:2008}.

\begin{cor} \label{diffprod}

Let $R$ be a $\sigma\delta$-$k$-algebra with $R^\sigma=k^\sigma=C$. Let $a_1,\dots,a_m\in C^\delta(x)^\times$ and $z_1,\dots,z_m\in R^\times$ such that $$\sigma(z_i)=a_iz_i; \quad i=1,\dots,m.$$ Then $z_1,\dots,z_m$ are differentially dependent over $k$ if and only if there exist: $n_1,\dots,n_m\in\mathbb{Z}$, not all zero and with $\mathrm{gcd}(n_1,\dots,n_m)=1$; $c \in \mathbb{Z}$; and an element $f\in k$, such that \begin{equation} \label{diffprod-eq}n_1\frac{\delta (a_1)}{a_1}+\dots+n_m\frac{\delta (a_m)}{a_m}  =\sigma(f)-f+c.\end{equation}\end{cor}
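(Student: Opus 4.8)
The plan is to reduce the multiplicative statement of Corollary~\ref{diffprod} to the additive statement of Proposition~\ref{diffsum} by passing to logarithmic derivatives, and then to use the $q$-discrete residue machinery (Proposition~\ref{indsum} and the computation in Lemma~\ref{qres-computation}) to control the shape of the exponents that appear. Set $b_i=\tfrac{\delta(a_i)}{a_i}$ for $i=1,\dots,m$. Differentiating the relation $\sigma(z_i)=a_iz_i$ logarithmically gives $\sigma\bigl(\tfrac{\delta z_i}{z_i}\bigr)-\tfrac{\delta z_i}{z_i}=b_i$, so each $w_i:=\tfrac{\delta z_i}{z_i}$ satisfies the first-order inhomogeneous relation $\sigma(w_i)-w_i=b_i$ to which Proposition~\ref{diffsum} applies. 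The $z_i$ being $\delta$-dependent over $k$ is essentially equivalent (via this logarithmic-derivative substitution, as in \cite[Cor.~3.3]{hardouin-singer:2008}) to the $w_i$ being $\delta$-dependent, so Proposition~\ref{diffsum} already furnishes a nonzero linear $\delta$-polynomial $\mathcal{L}$ with coefficients in $C=k^\sigma$ and an element $f\in k$ with $\mathcal{L}(b_1,\dots,b_m)=\sigma(f)-f$. This is the ``soft'' half and gives \eqref{diffprod-eq} but only with the coefficients $n_i$ allowed to be arbitrary linear $\delta$-polynomials rather than integers, and without the constant $c$.

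The substance of the corollary, and what sharpens \cite[Cor.~3.3]{hardouin-singer:2008}, is to show that the $\delta$-polynomial coefficients can be taken to be plain integers (up to clearing denominators and the overall normalization $\gcd(n_i)=1$), with the only genuine derivation-order-zero defect absorbed into the additive constant $c$. For this I would write the linear $\delta$-polynomial as $\mathcal{L}(Y_1,\dots,Y_m)=\sum_{i}\sum_{r\geq 0}c_{i,r}\,\delta^r(Y_i)$ with $c_{i,r}\in C$ and examine the $q$-discrete residues of both sides of $\mathcal{L}(b_1,\dots,b_m)=\sigma(f)-f$. By Proposition~\ref{indsum}, the right-hand side $\sigma(f)-f$ has vanishing $q$-discrete residue at every $q^{\mathbb{Z}}$-orbit and at infinity, so the same must hold for $\mathcal{L}(b_1,\dots,b_m)=\sum_{i,r}c_{i,r}\,\delta^r(b_i)$. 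Now for each $i$ the element $b_i=\tfrac{\delta(a_i)}{a_i}$ is exactly of the type analyzed in Lemma~\ref{qres-computation}: at a pole $\beta$ its higher $\delta$-derivatives $\delta^r(b_i)$ contribute to the residue at multiplicity $r+1$ a quantity proportional to $q\text{-}\mathrm{dres}\bigl(b_i,[\beta]_q,1\bigr)$ with the explicit nonzero factor $(-1)^r r!\,\beta^r$.

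The key step is therefore to exploit this multiplicity bookkeeping to force $c_{i,r}=0$ for $r\geq 1$. Because $b_i$ has only simple poles (its partial-fraction terms are of the form $\tfrac{e\beta q^\ell}{x-\beta q^\ell}$, so multiplicity one), each $\delta^r(b_i)$ has a pole of exact multiplicity $r+1$ at each such point, and terms of different $r$ live in different multiplicity strata. Reading off the residue at multiplicity $r+1$ and orbit $[\beta]_q$ from the vanishing condition, and using the nonvanishing factor from Lemma~\ref{qres-computation}, I would argue by descending induction on the top derivative order appearing in $\mathcal{L}$ that the coefficients $c_{i,r}$ for $r\geq 1$ must vanish (after noting that the simple-pole residues of the $b_i$ across distinct orbits cannot conspire to cancel these strata). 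This leaves $\mathcal{L}(Y_1,\dots,Y_m)=\sum_i c_{i,0}Y_i$ with $c_{i,0}\in C$; comparing logarithmic-derivative residues of the $a_i$ forces the $c_{i,0}$ to be (rational, hence after scaling integer) multiples, yielding $n_i\in\mathbb{Z}$ with $\gcd=1$, while the residue-at-infinity condition is precisely what the additive constant $c$ is introduced to accommodate. The main obstacle I anticipate is the last reduction: carefully justifying that the multiplicity-stratified residue conditions are independent enough to kill all positive-order coefficients, and that the remaining order-zero relation has rational (not merely $C$-linear) coefficients, which is where the hypothesis $a_i\in C^\delta(x)$ and $\beta\in C^\delta$ enters to guarantee the residues lie in $C^\delta$ and the factors $\beta^r$ are $\delta$-constant.
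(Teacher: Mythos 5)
Your opening reduction coincides with the paper's: set $b_i=\tfrac{\delta(a_i)}{a_i}$, pass to $w_i=\tfrac{\delta(z_i)}{z_i}$, and invoke Proposition~\ref{diffsum} to get a nonzero linear $\delta$-polynomial $\mathcal{L}=\sum_{i}\sum_{j}c_{i,j}\delta^j(Y_i)$ with $\mathcal{L}(b_1,\dots,b_m)=\sigma(f)-f$ (and in the converse direction one differentiates \eqref{diffprod-eq} to kill $c$; the paper instead checks directly that $\sum_i n_i\delta\bigl(\tfrac{\delta(z_i)}{z_i}\bigr)-\delta(f)$ is $\sigma$-invariant). But your key step --- the descending induction forcing $c_{i,r}=0$ for all $r\geq 1$ --- is not merely unjustified; it aims at a false statement. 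The top-stratum residue computation only yields the linear constraints $\sum_i c_{i,r}e_i(\beta)=0$ for each orbit $[\beta]_q$, which nonzero coefficients can perfectly well satisfy (take $a_1=a_2$, so $b_1=b_2$ and $\mathcal{L}=\delta(Y_1)-\delta(Y_2)$ works). More decisively, there are cases where \emph{every} valid relation has order at least $1$: for $m=1$ and $a_1=x$ one has $b_1=\tfrac{\delta(x)}{x}=1$, so $\mathcal{L}=\delta(Y_1)$ gives $\mathcal{L}(b_1)=0=\sigma(0)-0$, while no order-zero relation $c_{1,0}\cdot 1=\sigma(f)-f$ with $c_{1,0}\neq 0$ can exist, since by Proposition~\ref{indsum} the $q$-discrete residue at infinity of the left-hand side is $c_{1,0}$; yet the corollary holds there with $n_1=1$, $c=1$, $f=0$. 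This also shows your closing remark is backwards: an exact order-zero relation $\sum_i c_{i,0}b_i=\sigma(f)-f$ automatically has vanishing residue at infinity, so the constant $c$ is not there to ``accommodate'' it; rather, $c$ is needed precisely because applying $\delta$ destroys the residue at infinity ($\delta^r(b_i)$ for $r\geq 1$ has zero residue at infinity no matter what $b_i$ is), so a higher-order relation carries no information about that residue. A further technical obstruction to your induction: Lemma~\ref{qres-computation} controls only the top multiplicity stratum, and at multiplicity $r'+1<r+1$ the higher derivatives $\delta^r(b_i)$ also contribute through their lower-order pole terms, so the strata are not independent below the top one.

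The paper's actual argument never prunes $\mathcal{L}$ to order zero; it reads the integer relation off the top stratum of the given $\mathcal{L}$. With $r$ the maximal order occurring, Lemma~\ref{qres-computation} identifies the multiplicity-$(r+1)$ residue of $g=\mathcal{L}(b_1,\dots,b_m)$ at $[\beta]_q$ as $(-1)^r\,r!\,\beta^{r+1}\sum_i c_{i,r}e_i$, where $q\text{-}\mathrm{dres}\bigl(b_i,[\beta]_q,1\bigr)=\beta e_i$ with $e_i\in\mathbb{Z}$ because $a_i\in C^\delta(x)^\times$. Vanishing for every orbit says the nonzero vector $(c_{1,r},\dots,c_{m,r})$ annihilates all the integer residue vectors, so this homogeneous system with integer coefficients has a nonzero rational, hence integer, solution $(n_1,\dots,n_m)$. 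Since each $b_i$ has only simple finite poles, $\sum_i n_i b_i$ then has all finite $q$-discrete residues zero; setting $c:=\sum_i n_i\,q\text{-}\mathrm{dres}\bigl(b_i,\infty\bigr)\in\mathbb{Z}$, Proposition~\ref{indsum} supplies $f\in k$ (which may be taken in $\bar{\mathbb{Q}}(x)$) with $\sum_i n_i b_i=\sigma(f)-f+c$, and one divides through by $\gcd(n_1,\dots,n_m)$. So your multiplicity bookkeeping is pointed at exactly the right lemma, but the correct use of the top stratum is extraction of a new order-zero integer relation with the constant $c$ recovered at infinity, not elimination of the positive-order coefficients.
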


\begin{proof} First suppose there exist integers $n_1,\dots, n_m,c\in\mathbb{Z}$ as in \eqref{diffprod-eq}. Since for each $i=1,\dots,m$ we have that $\sigma\bigl(\frac{\delta(z_i)}{z_i}\bigr) =\frac{\delta(z_i)}{z_i} + \frac{\delta(a_i)}{a_i}$, it follows that \begin{multline*}\sigma\left[\left(\sum_{i=1}^mn_i\delta\left(\frac{\delta(z_i)}{z_i}\right)\right)-\delta(f)\right]=\sum_{i=1}^mn_i\delta\left(\frac{\delta(z_i)}{z_i}\right)+\delta\left(\sum_{i=1}^mn_i\frac{\delta(a_i)}{a_i}\right)-\sigma\bigl(\delta(f)\bigr)=\\
=\sum_{i=1}^mn_i\delta\left(\frac{\delta(z_i)}{z_i}\right)+\delta\bigl(\sigma(f)-f+c\bigr)-\sigma\bigl(\delta(f)\bigr)=\sum_{i=1}^mn_i\delta\left(\frac{\delta(z_i)}{z_i}\right)-\delta(f).\end{multline*} Therefore, \[\sum_{i=1}^mn_i\delta\left(\frac{\delta(z_i)}{z_i}\right)=\delta(f)+e\] for some $e\in R^\sigma=k^\sigma$. This shows that $z_1,\dots,z_m$ are $\delta$-dependent over $k$, after multiplying by $(z_1\dots z_m)^2$ on both sides.

Since $\sigma(\frac{\delta(z_i)}{z_i})=\frac{\delta(z_i)}{z_i}+\frac{\delta(a_i)}{a_i}$ for each $i=1,\dots,m$, Proposition~\ref{diffsum} implies that the $z_i$ are differentially dependent over $k$ if and only if there exists an element $f\in k$ and a nonzero linear $\delta$-polynomial $$\mathcal{L}(Y_1,\dots,Y_m)=\sum_{i=1}^m\sum_{j=0}^{r_i} c_{i,j}\delta^jY_i, \quad c_{i,j}\in C,$$ such that \begin{equation}\label{ord1rel}g=\mathcal{L}\left(\frac{\delta(a_1)}{a_1},\dots,\frac{\delta(a_m)}{a_m}\right)=\sigma(f)-f.\end{equation}

Let $r=\mathrm{max}\{\ r_i \ | \ c_{i,r_i}\neq 0 \ \text{for some} \ i\}$. For each $0\neq \beta\in C$, it follows from \eqref{ord1rel}, Proposition~\ref{indsum}, and Lemma~\ref{qres-computation}, that 
\begin{equation} \label{EQ:qres1}
q\text{-}\mathrm{dres} (g,[\beta]_q,r+1)=(-1)^r \cdot r! \cdot \beta^r \cdot  \sum_{i=1}^m c_{i,r} \cdot q\text{-}\mathrm{dres} \left(\frac{\delta(a_i)}{a_i},[\beta]_q,1\right)=0.
\end{equation}
On the other hand, it follows from Definition~\ref{DEF:qres} that for each $i=1,\dots,m$ we have that
\begin{equation} \label{EQ:qres2}
q\text{-}\mathrm{dres} \left(\frac{\delta(a_i)}{a_i},[\beta]_q,1\right) = \beta \cdot e_i \qquad \text{ for some } \qquad e_i \in \mathbb{Z}. 
\end{equation}
Substituting~\eqref{EQ:qres2} into~\eqref{EQ:qres1}, we have 
\begin{equation} \label{EQ:qres3}
q\text{-}\mathrm{dres} (g,[\beta]_q,r+1)=(-1)^r\cdot r! \cdot \beta^{r + 1} \cdot \sum_{i=1}^m c_{i,r} \cdot e_i = 0.
\end{equation}
Since $\beta \neq 0$, the above equation is equivalent to $\sum_{i=1}^m c_{i,r} \cdot e_i = 0$. 
Since $e_i \in \mathbb{Z}$ for each $i = 1, \ldots, m$, we may take the $c_{i,r}=n_i$ to be integers. 
Set $c =  \sum_{i = 1}^m n_i \cdot q\text{-}\mathrm{dres} \left(\frac{\delta(a_i)}{a_i}, \infty \right)$.
Since both $n_i$ and $q\text{-}\mathrm{dres} \left(\frac{\delta(a_i)}{a_i}, \infty \right)$ are integers, we see that $c \in \mathbb{Z}$ is divisible by $\mathrm{gcd}(n_1,\dots,n_m)$. 
Moreover, we have 
\begin{equation} \label{EQ:qres4}
q\text{-}\mathrm{dres} \left(n_1\frac{\delta (a_1)}{a_1}+\dots+n_m\frac{\delta (a_m)}{a_m} - c ,\infty\right) = 0.
\end{equation}
By~\eqref{EQ:qres3} and~\eqref{EQ:qres4}, the conclusion follows from another application of Proposition~\ref{indsum} and dividing both sides by $\mathrm{gcd}(n_1,\dots,n_m)$.
\end{proof}


\section{Hendriks' algorithm} \label{hendriks-sec}

In this section, we summarize the results of \cite{hendriks:1997} that we will need in our algorithm, and explain how to refine them to meet our goals. From now on, we restrict our attention to equations of the form \begin{equation}\label{difeq} \sigma^2(y) + a\sigma(y) + by = 0,\end{equation} where $a,b\in\bar{\mathbb{Q}}(x)$ with $b\neq 0$, and $\sigma$ is the $\bar{\mathbb{Q}}$-linear automorphism of $\bar{\mathbb{Q}}(x)$ defined by $\sigma(x)=qx$, where $q\in\bar{\mathbb{Q}}$ is neither zero nor a root of unity. Our discussion here could be generalized to drop the assumption that $q$ is an algebraic number and allowing $a,b\in C_0(x)$, for any computable algebraically closed field $C_0$ containing $\mathbb{Q}(q)$, as we mentioned in the introduction (see also the introduction to \cite{hendriks:1997}), but at the cost of overburdening the notation.

The matrix equation corresponding to \eqref{difeq} is \begin{equation}\label{mateq} \sigma(Y)=AY,\qquad \text{where}\quad A:=\begin{pmatrix} 0 & 1 \\ -b & -a\end{pmatrix}\in\mathrm{GL}_2(k).\end{equation}We consider $\bar{\mathbb{Q}}(x)$ as a $\sigma\delta$-field by setting $\delta=x\tfrac{d}{dx}$, the Euler derivation. In this section only we will denote $k=\bar{\mathbb{Q}}(x)$, but in future sections we will recycle notation and denote by $k$ the larger $\sigma\delta$-field $C(x)$, where $C$ is a $\delta$-closure of $\bar{\mathbb{Q}}$, and $\sigma$ is the $C$-linear automorphism of $C(x)$ defined by $\sigma(x)=qx$.

The algorithm of \cite{hendriks:1997} computes the $\sigma$-Galois group of \eqref{difeq} over the larger basefield $k_\infty$ defined as follows. Let $\{q_n \in\bar{\mathbb{Q}}\ |\ n\in\mathbb{N}\}$ denote a compatible system of $n$-th roots of $q=q_1$, so that for any factorization $\ell m=n$ we have $q_n^\ell=q_m$, and consider the cyclic $\sigma$-field extension $k_n=\bar{\mathbb{Q}}(x_n)$ of $\bar{\mathbb{Q}}(x)$ such that $x_n^n=x_1=x$ and $x_n^\ell=x_m$ for any factorization $n=\ell m$, with the $\sigma$-field structure given by $\sigma(x_n)=q_nx_n$. Then the $\bar{\mathbb{Q}}$-linear maps $k_m\hookrightarrow k_n$ defined by $x_m\mapsto x_n^\ell$ are embeddings of $\sigma$-fields. Let $k_\infty=\varinjlim k_n =\bigcup_{n\geq 1}k_n$. By \cite[Lemmas~9 and 10]{hendriks:1997}, the $\sigma$-field $k_\infty$ has property $\mathcal{P}$:

\begin{defn} \label{property_P-def} We say a $\sigma$-field $k$ has property $\mathcal{P}$ if:
\begin{enumerate}
\item $k$ is a $\mathcal{C}^1$ field; and
\item if $k'$ is a finite algebraic extension of $k$ such that $\sigma$ extends to an automorphism of $k'$ then $k'=k$.
\end{enumerate}
\end{defn}
\noindent This allows Hendriks to compute the $\sigma$-Galois group of \eqref{difeq} over $k_\infty$ by finding a gauge transformation $T\in\mathrm{GL}_2(k_\infty)$ that puts \eqref{mateq} in the standard form of \cite[Definition~8]{hendriks:1997}.

Another special consequence of the fact that $k_\infty$ enjoys property $\mathcal{P}$ (Definition~\ref{property_P-def}) is that the $\sigma$-Galois group $H_\infty$ for \eqref{difeq} over $k_\infty$ (and in fact every difference Galois group over $k_\infty$) is such that its quotient $H_\infty/H_\infty^\circ$ by the connected component of the identity $H_\infty^\circ$ must be a (finite) cyclic group (cf.~\cite[Thm.~6]{hendriks:1997}). This facilitates the algorithm of \cite{hendriks:1997} by ruling out a priori the consideration of algebraic groups whose group of connected components is not cyclic (cf~\cite[Lem.~12]{hendriks:1997}). The situation for $\sigma$-Galois groups over $k_1$ is less restrictive, but we still know by \cite[Prop.~12.2(1)]{vanderput-singer:1997} that the $\sigma$-Galois group $H_1$ for \eqref{difeq} over $k_1$ (and in fact every difference Galois group over $k_1$) has the property that the quotient $H_1/H_1^\circ$ is (finite) bicyclic, i.e., a product of two finite cyclic groups. Thus it is possible for us to realize additional algebraic groups $H_1$ as Galois groups for \eqref{difeq} over $k_1$ that do not occur in the list \cite[Lem.~16 and Lem.~20]{hendriks:1997} of possible $\sigma$-Galois groups over $k_\infty$. In particular, any reducible algebraic subgroup of $\mathrm{GL}_2(\bar{\mathbb{Q}})$ can (and does) occur as the $\sigma$-Galois group for some difference equation \eqref{difeq}, and any irreducible imprimitive algebraic subgroup of $\mathrm{GL}_2(\bar{\mathbb{Q}})$ with bicyclic group of connected components can (and does) occur as a Galois group over $k_1$. 

The algorithm developed in \cite{hendriks:1997} to compute $H_\infty$ proceeds as follows. We first decide whether there exists a solution $u\in k_\infty$ to the \emph{Riccati equation} \begin{equation}\label{ric1} u\sigma(u)+au+b=0.\end{equation} If such a solution $u$ exists, then the $\sigma$-Galois group $H_\infty$ of \eqref{mateq} over $k_\infty$ is \emph{reducible}, i.e., conjugate to an algebraic subgroup of $$\mathbb{G}_m(\bar{\mathbb{Q}})^2\ltimes\mathbb{G}_a(\bar{\mathbb{Q}})\simeq\left\{\begin{pmatrix} \alpha & \beta \\ 0 & \lambda \end{pmatrix} \ \middle| \ \alpha,\beta,\lambda\in \bar{\mathbb{Q}}, \ \alpha\lambda\neq 0\right\}.$$ Moreover, if there exist at least two distinct solutions $u_1, u_2\in k_\infty$ to \eqref{ric1} then $H_\infty$ is \emph{diagonalizable}, i.e., conjugate to an algebraic subgroup of $$\mathbb{G}_m(\bar{\mathbb{Q}})^2\simeq\left\{\begin{pmatrix} \alpha & 0 \\ 0 & \lambda\end{pmatrix} \ \middle| \ \alpha,\lambda\in \bar{\mathbb{Q}}, \ \alpha\lambda\neq 0\right\};$$ and if there are at least three distinct solutions in $k_\infty$ to \eqref{ric1} then there are infinitely many, and this occurs if and only if $H_\infty$ is an algebraic subgroup of $$\mathbb{G}_m(\bar{\mathbb{Q}})\simeq\left\{\begin{pmatrix} \alpha & 0 \\ 0 & \alpha\end{pmatrix} \ \middle| \ \alpha\in \bar{\mathbb{Q}}, \ \alpha\neq 0\right\}.$$

If there is no solution $u\in k_\infty$ to the Riccati equation \eqref{ric1}, then $H_\infty$ is irreducible by \cite[Thm.~13]{hendriks:1997}. In this case, the next step is to attempt to find $T\in\mathrm{GL}_2(k_\infty)$ and $r\in k_\infty$ such that \begin{equation}\label{impequiv}\sigma(T)\begin{pmatrix} 0 & 1 \\ -b & -a\end{pmatrix}T^{-1}=\begin{pmatrix} 0 & 1 \\ -r & 0\end{pmatrix}.\end{equation} If $a=0$ already, then we may take $T=\left(\begin{smallmatrix}1 & 0 \\ 0 & 1\end{smallmatrix}\right)$ and $r=b$. If $a\neq 0$, we then attempt to find a solution $e\in k_\infty$ to the Riccati equation \begin{equation}\label{ric2} e\sigma^2(e) + \bigl(\sigma^2(\tfrac{b}{a}) - \sigma(a) + \tfrac{\sigma(b)}{a}\bigr)e + \tfrac{\sigma(b)b}{a^2}=0.\end{equation} If there exists such a solution $e\in k_\infty$ to \eqref{ric2}, then it is proved in \cite[Thm.~18]{hendriks:1997} that there exists a matrix $T\in\mathrm{GL}_2(k_\infty)$ such that \eqref{impequiv} is satisfied with \begin{equation}\label{imprimitive-r}r=-a\sigma(a)+\sigma(b)+a\sigma^2(\tfrac{b}{a})+a\sigma^2(e),\end{equation} and $H_\infty$ is \emph{imprimitive}, i.e., conjugate to an algebraic subgroup of \begin{equation}\label{dihedral}\{\pm1\}\ltimes\mathbb{G}_m(\bar{\mathbb{Q}})^2\simeq\left\{\begin{pmatrix} \alpha & 0 \\ 0 & \lambda\end{pmatrix} \ \middle| \ \alpha,\lambda\in \bar{\mathbb{Q}}, \ \alpha\lambda\neq 0\right\} \cup\left\{\begin{pmatrix} 0 & \beta \\ \epsilon & 0\end{pmatrix} \ \middle| \ \beta,\epsilon\in \bar{\mathbb{Q}}, \ \beta\epsilon\neq 0\right\}.\end{equation}

Finally, if $a\neq 0$ and neither \eqref{ric1} nor \eqref{ric2} admits a solution in $k_\infty$, then $\mathrm{SL}_2(\bar{\mathbb{Q}})\subseteq H_\infty$, and we compute $H_\infty$ as in \cite[\S4.4]{hendriks:1997}, by determining the image $\mathrm{det}(H_\infty)\subseteq \mathbb{G}_m(\bar{\mathbb{Q}})$ of the determinant homomorphism.

In order to produce an algorithm that computes the $\sigma$-Galois group of \eqref{difeq} over $k=k_1$, we introduce additional notation and state some ancillary results. Let $\zeta_n\in\bar{\mathbb{Q}}$ for $n\in\mathbb{N}$ denote a compatible system of $n$-th roots of unity, so that for any factorization $\ell m=n$ we have $\zeta_n^\ell=\zeta_m$. Then $k_n$ is a $\sigma$-PV ring over $k_m$ for $\sigma(y)=q_ny$ with fundamental solution ($1\times 1$ matrix) $y=x_n$ and cyclic $\sigma$-Galois group $\langle\tau_{n,m}\rangle=\mathrm{Gal}_{\sigma}(k_n/k_m)$ given by $\tau_{n,m}(x_n)=\zeta_\ell x_n$. Let $S_\infty$ denote a $\sigma$-PV ring over $k_\infty$ for \eqref{mateq} with fundamental solution matrix $Y$. Then $S_n=k_n[Y,1/\mathrm{det}(Y)]$ is a $\sigma$-PV ring over $k_n$ for \eqref{mateq}. Let us write $H_n=\mathrm{Gal}_\sigma(S_n/k_n)$ for $n\in\mathbb{N}\cup\{\infty\}$. Then we see that $S_n$ is a $\sigma$-PV ring over $k_1$ for the system \[\sigma(Y_n)=\begin{pmatrix} 0 & 1 & 0 \\ -b & -a & 0 \\ 0 & 0 & q_n\end{pmatrix}Y_n, \ \ \text{with fundamental solution matrix}\ \ Y_n=\begin{pmatrix} y_1 & y_2 & 0\\ \sigma(y_1) & \sigma (y_2) & 0 \\ 0 & 0 & x_n\end{pmatrix}.\] The following result is proved formally as in~\cite[Lem.~3.1 and Prop.~3.2]{arreche:2014b} and \cite[Lem.~12 and Prop.~13]{arreche:2015}. Full proofs will appear in \cite{arreche:2020}.

\begin{prop} \label{fiber-product}Let $\tilde{H}_n:=\mathrm{Gal}_\sigma(S_n/k_1)$ and $\mu_n$ denote cyclic group of $n$-th roots of unity. Then the intersection $S_1\cap k_n=k_m$ for some factorization $n=\ell m$, and the map \begin{align*}\varphi:\tilde{H}_n &\rightarrow H_1\times \mu_n\\ \gamma &\mapsto (\gamma|_{S_1},\gamma|_{k_n})\end{align*} is an isomorphism onto the fiber product \begin{equation}\label{fiber-product-eq}H_1\times_{\mu_m}\mu_n=\{(\gamma,\zeta) \ | \ \gamma\in H_1 \ \text{and} \ \zeta\in\mu_n \ \text{such that} \ \gamma(x_m)=\zeta^\ell x_m\}.\end{equation} 
\end{prop}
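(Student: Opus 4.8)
The plan is to follow the strategy of \cite{arreche:2014b, arreche:2015}, realizing $S_n$ as the compositum of the two intermediate $\sigma$-PV rings $S_1$ and $k_n$ over their intersection, and then reading off $\tilde{H}_n$ as the corresponding fiber product of Galois groups. The first task is to pin down $S_1\cap k_n$. Since $\bar{\mathbb{Q}}$ contains all roots of unity and $k_n=\bar{\mathbb{Q}}(x_n)$ is generated over $k_1=\bar{\mathbb{Q}}(x)$ by $x_n$ with $x_n^n=x$, the extension $k_n/k_1$ is cyclic Galois of degree $n$, whose intermediate fields are exactly the $k_{m'}$ for $m'\mid n$. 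Now $S_1\cap k_n$ is a $k_1$-subalgebra of the finite field extension $k_n/k_1$; being a domain that is finite-dimensional over $k_1$, it is itself a field, hence equal to $k_m$ for some factorization $n=\ell m$.

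Next I would verify that $\varphi$ is a well-defined injective homomorphism landing in the fiber product. Each $\gamma\in\tilde{H}_n$ sends the fundamental matrix $Y$ to $YM_\gamma$ with $M_\gamma\in\mathrm{GL}_2(\bar{\mathbb{Q}})$, so it leaves $S_1=k_1[Y,1/\det(Y)]$ invariant; and since $\gamma$ fixes $x=x_n^n\in k_1$ it must send $x_n\mapsto\zeta x_n$ for some $\zeta\in\mu_n$, so it leaves $k_n$ invariant. Thus $\gamma\mapsto(\gamma|_{S_1},\zeta)$ is well defined, and it is a homomorphism because restriction is compatible with composition while the $\mu_n$-component multiplies. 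It is injective because $S_n$ is generated as a ring by $S_1$ and $x_n$, so a $\gamma$ fixing both is the identity. Finally $\gamma(x_m)=\gamma(x_n^\ell)=\zeta^\ell x_m$ with $x_m\in k_m\subseteq S_1$, which is precisely the defining relation of $H_1\times_{\mu_m}\mu_n$; hence $\mathrm{Im}(\varphi)\subseteq H_1\times_{\mu_m}\mu_n$.

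The heart of the matter is surjectivity, which I would obtain by proving the $\sigma$-ring isomorphism $S_n\cong S_1\otimes_{k_m}k_n$. Observe that $S_1=k_m[Y,1/\det(Y)]$ is also a $\sigma$-PV ring over $k_m$ (it is $\sigma$-simple and $k_m^\sigma=\bar{\mathbb{Q}}$ is algebraically closed), while $k_n$ is the $\sigma$-PV ring over $k_m$ for $\sigma(y)=q_ny$; the multiplication map $S_1\otimes_{k_m}k_n\to S_n$ is then a surjection of $\sigma$-$k_m$-algebras. To see it is injective it suffices, by the standard simplicity criterion for difference Picard–Vessiot rings (cf.~\cite{vanderput-singer:1997}), to check that $S_1\otimes_{k_m}k_n$ is $\sigma$-simple, equivalently that $(S_1\otimes_{k_m}k_n)^\sigma=\bar{\mathbb{Q}}$. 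Expanding a $\sigma$-invariant in the $k_m$-basis $\{x_n^i\}_{i=0}^{\ell-1}$ of $k_n$ as $\sum_i s_i\otimes x_n^i$ forces $\sigma(s_i)=q_n^{-i}s_i$; a nonzero such $s_i$ would generate a nonzero $\sigma$-ideal of the simple ring $S_1$ and hence be a unit, whence $x_n^i=c\,s_i^{-1}\in S_1\cap k_n=k_m$ for some $c\in\bar{\mathbb{Q}}$, which is impossible for $0<i<\ell$. Thus only $s_0\in\bar{\mathbb{Q}}$ survives, so the map is an isomorphism. This is exactly the step where the sharp equality $S_1\cap k_n=k_m$ is indispensable, and I expect it to be the main obstacle.

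With the identification $S_n\cong S_1\otimes_{k_m}k_n$ in hand the conclusion is formal. Since $S_1$ is an intermediate $\sigma$-PV ring, restriction yields a short exact sequence $1\to\mathrm{Gal}_\sigma(S_n/S_1)\to\tilde{H}_n\xrightarrow{\pi_1}H_1\to 1$, where $\pi_1=\varphi$ followed by projection to $H_1$. Under the tensor identification one computes $\mathrm{Gal}_\sigma(S_n/S_1)=\mu_\ell$: any $\gamma$ fixing $S_1$ satisfies $\gamma(x_n)/x_n\in S_n^\sigma=\bar{\mathbb{Q}}$ and $(\gamma(x_n)/x_n)^\ell=\gamma(x_m)/x_m=1$, so $\gamma(x_n)=\zeta x_n$ with $\zeta\in\mu_\ell$, and conversely each such $\zeta$ is realized by $\mathrm{id}_{S_1}\otimes g_\zeta$ with $g_\zeta(x_n)=\zeta x_n$. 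On the other side, the fiber product fits in $1\to\mu_\ell\to H_1\times_{\mu_m}\mu_n\to H_1\to 1$, the surjectivity onto $H_1$ coming from surjectivity of the $\ell$-th power map $\mu_n\to\mu_m$. The map $\varphi$ is compatible with these two sequences, inducing the identity on $H_1$ and an isomorphism $\mu_\ell\to\mu_\ell$ on kernels, so by the short five lemma $\varphi$ is an isomorphism onto $H_1\times_{\mu_m}\mu_n$.
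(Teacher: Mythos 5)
Your overall architecture---pinning down $S_1\cap k_n=k_m$ via the cyclic extension $k_n/k_1$, checking that $\varphi$ is a well-defined injective homomorphism into the fiber product, proving $S_n\cong S_1\otimes_{k_m}k_n$, and deducing the isomorphism---is exactly the strategy of the sources the paper points to (the paper gives no inline proof, citing \cite[Lem.~3.1, Prop.~3.2]{arreche:2014b} and \cite[Lem.~12, Prop.~13]{arreche:2015} and deferring full details to \cite{arreche:2020}), and most of your steps are sound. But there is a genuine gap at the step you yourself identify as the heart of the matter: you justify injectivity of the multiplication map $S_1\otimes_{k_m}k_n\to S_n$ by asserting that $\sigma$-simplicity of this ring is \emph{equivalent} to $(S_1\otimes_{k_m}k_n)^\sigma=\bar{\mathbb{Q}}$. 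Only one direction is standard: a $\sigma$-simple ring finitely generated over $k$ has $\sigma$-constants $k^\sigma$ (cf.~\cite[Lem.~1.8]{vanderput-singer:1997}). The converse is false in general: $C[y]$ with $\sigma(y)=qy$ has $\sigma$-constants exactly $C$, yet $(y)$ is a proper nonzero $\sigma$-stable ideal. So your (correct) computation that the constants of the tensor product reduce to $\bar{\mathbb{Q}}$ does not, by itself, show that the kernel of the surjection $S_1\otimes_{k_m}k_n\to S_n$ vanishes.

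The good news is that your own twisted-eigenvector computation closes the gap if you aim it at the kernel rather than at the constants. Take a kernel element $\sum_i s_i\otimes x_n^i$ of minimal support $J$; the coefficients at a fixed $i_0\in J$ of kernel elements supported in $J$ form a nonzero $\sigma$-stable ideal of $S_1$ (the kernel is $\sigma$-stable, and one rescales by powers of $q_n$), so $\sigma$-simplicity lets you normalize $s_{i_0}=1$; applying $\sigma$ and subtracting $q_n^{i_0}$ times the relation yields a shorter relation, hence zero by minimality, giving $\sigma(s_i)=q_n^{i_0-i}s_i$ for $i\in J\setminus\{i_0\}$; each such $s_i$ is then a unit of $S_1$ (its ideal is $\sigma$-stable), and $s_ix_n^{i-i_0}\in S_n^\sigma=\bar{\mathbb{Q}}$ forces $x_n^{i-i_0}\in S_1\cap k_n=k_m$ with $0<|i-i_0|<\ell$, a contradiction. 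This proves injectivity directly, and $\sigma$-simplicity of the tensor product then follows a posteriori from $S_n$ being a $\sigma$-PV ring over $k_n$. One smaller point: in your closing five-lemma argument, the surjectivity of the restriction $\tilde{H}_n\to H_1$ is asserted rather than proved; it is not automatic, but it does follow at once from your tensor identification, since for $(\gamma_1,\zeta)$ in the fiber product the map $\gamma_1\otimes g_\zeta$ is a well-defined $\sigma$-$k_1$-automorphism of $S_1\otimes_{k_m}k_n$ precisely because $\gamma_1$ and $g_\zeta$ agree on $k_m$---and indeed this observation gives surjectivity of $\varphi$ onto the fiber product in one stroke, making the five lemma unnecessary.
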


We record the following two consequences of Proposition~\ref{fiber-product}.

\begin{cor} \label{fiber-cor-1} $S_1\cap k_n=k_1$ if and only if $H_n\simeq H_1$, and $S_1\cap k_n=k_n$ if and only if $H_n$ is a normal subgroup of $H_1$ of index $n$.
\end{cor}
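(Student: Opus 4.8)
The plan is to recognize $H_n$ as a subgroup of $H_1$ via restriction to $S_1$, and to read off its image and index directly from the fiber-product description of $\tilde H_n$ furnished by Proposition~\ref{fiber-product}. Write $n=\ell m$ with $S_1\cap k_n=k_m$ as there, and recall the isomorphism $\varphi\colon\tilde H_n\xrightarrow{\sim}H_1\times_{\mu_m}\mu_n$. The organizing object will be a character $\chi\colon H_1\to\mu_m$ recording the action of $H_1$ on the intermediate field $k_m\subseteq S_1$; once $\ker\chi$ is understood, both equivalences become numerical statements about $m$.

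First I would construct $\chi$. Since $x_m\in k_m\subseteq S_1$ satisfies $\sigma(x_m)=q_mx_m$ and $x_m^m=x\in k_1$, for any $\gamma\in H_1$ the ratio $\gamma(x_m)/x_m$ is a $\sigma$-constant (because $\gamma$ commutes with $\sigma$ and fixes $q_m$), hence lies in $(S_1)^\sigma=k_1^\sigma=\bar{\mathbb{Q}}$, and it is an $m$-th root of unity because $\bigl(\gamma(x_m)/x_m\bigr)^m=\gamma(x)/x=1$. Thus $\gamma\mapsto\gamma(x_m)/x_m$ is a homomorphism $\chi\colon H_1\to\mu_m$ with $\ker\chi=\{\gamma\in H_1\mid\gamma(x_m)=x_m\}$. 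Because $k_m$ is a $\sigma$-PV extension of $k_1$ with $\mathrm{Gal}_\sigma(k_m/k_1)\simeq\mu_m$, the restriction map $H_1\to\mathrm{Gal}_\sigma(k_m/k_1)$---which is precisely $\chi$ under this identification---is surjective, so $\ker\chi$ is a normal subgroup of $H_1$ of index exactly $m$.

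Next I would identify $H_n$ with $\ker\chi$. By definition $H_n=\mathrm{Gal}_\sigma(S_n/k_n)$ is the subgroup of $\tilde H_n$ acting trivially on $k_n$, i.e.\ on $x_n$; under $\varphi$ this is exactly the slice $\zeta=1$ of the fiber product, so $\varphi(H_n)=\{(\gamma,1)\mid\gamma\in H_1,\ \gamma(x_m)=1^{\ell}x_m=x_m\}$. The forward inclusion shows every such $\gamma$ lies in $\ker\chi$; conversely each $\gamma\in\ker\chi$ produces a legitimate pair $(\gamma,1)$ in the fiber product whose $\varphi$-preimage fixes $k_n$ and therefore lies in $H_n$. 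Projecting to the first coordinate is thus an isomorphism $H_n\xrightarrow{\sim}\ker\chi$, and it is nothing but restriction to $S_1$; so $H_n$ is identified with the normal, index-$m$ subgroup $\ker\chi\leq H_1$.

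With this identification both equivalences follow formally from $S_1\cap k_n=k_m$ and $n=\ell m$: one has $S_1\cap k_n=k_1\iff m=1\iff\ker\chi=H_1$, which says restriction carries $H_n$ onto all of $H_1$, i.e.\ $H_n\simeq H_1$; and $S_1\cap k_n=k_n\iff m=n$, which holds iff $\ker\chi$ (equivalently $H_n$) is normal of index $n$ in $H_1$. I expect the single genuinely non-formal step to be the surjectivity of $\chi$ onto all of $\mu_m$: this is where the normality of the intermediate $\sigma$-PV extension $k_m/k_1$---equivalently, that \emph{every} $m$-th root of unity is realized by some element of $H_1$ acting on $x_m$---is essential, and it is exactly what forces the index to be $m$ rather than merely a divisor of $m$.
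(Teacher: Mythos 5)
Your proof is correct and takes essentially the approach the paper intends: Corollary~\ref{fiber-cor-1} is recorded there as a direct consequence of Proposition~\ref{fiber-product}, and your character $\chi$ is exactly the bookkeeping that makes the fiber product $H_1\times_{\mu_m}\mu_n$ explicit, with $H_n$ identified as the slice $\zeta=1$, i.e.\ $\ker\chi$, a normal subgroup of index $m$. Your flagged non-formal step---surjectivity of the restriction $H_1\to\mathrm{Gal}_\sigma(k_m/k_1)\simeq\mu_m$, which holds since $k_m/k_1$ is itself a $\sigma$-PV extension (or directly: a proper image $\mu_{m'}$ would force $x_m^{m'}\in k_1$ by the Galois correspondence, which is absurd)---is the right one, and the two equivalences then reduce to $m=1$ versus $m=n$ exactly as the paper intends.
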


\begin{cor}\label{fiber-cor-2} The intersection in $S_\infty$ given by $k_\infty\cap S_1=k_m$ for some $m\in\mathbb{N}$, and $H_\infty\simeq H_{\ell m}$ for every $\ell\in \mathbb{N}$. In particular, $H_\infty$ is a normal subgroup of $H_1$ of index $m$.
\end{cor}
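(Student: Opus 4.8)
The plan is to reduce the whole statement to the single fact that the intersection $k_\infty\cap S_1$ stabilises, after which both assertions about $H_\infty$ drop out of the Galois correspondence combined with Proposition~\ref{fiber-product} and Corollary~\ref{fiber-cor-1}. I will use throughout that, for every $n\in\mathbb{N}\cup\{\infty\}$, the group $H_n$ is realised inside $\mathrm{GL}_2(\bar{\mathbb{Q}})$ through the single fundamental matrix $Y$: since any $\gamma$ acts by $\gamma(Y)=YM_\gamma$ with $M_\gamma$ a constant matrix, restriction sends $\gamma\in H_n$ to the element of $H_{n'}$ with the \emph{same} matrix whenever $n'\mid n$. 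Thus the $H_n$ form a chain $H_\infty\subseteq\dots\subseteq H_{n'}\subseteq H_n\subseteq\dots\subseteq H_1$ of algebraic subgroups of $\mathrm{GL}_2(\bar{\mathbb{Q}})$ that is decreasing under divisibility, and $H_\infty=\varprojlim_n H_n$.

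First I would prove that $k_\infty\cap S_1=k_m$ for some $m\in\mathbb{N}$. By Proposition~\ref{fiber-product} each $k_n\cap S_1=k_{m_n}$ with $m_n\mid n$, and for $n\mid n'$ one has $m_n\mid m_{n'}$, so $k_\infty\cap S_1=\bigcup_n k_{m_n}$ is a subfield of $k_\infty$ contained in $S_1$ and algebraic over $k_1$. The key point is that $S_1=k_1[Y,1/\mathrm{det}(Y)]$ is a finitely generated $k_1$-algebra which, being a $\sigma$-PV ring, is a finite product of domains (the $\sigma$-analogue of Proposition~\ref{idemp}); hence the relative algebraic closure of $k_1$ in the fraction field of each factor is \emph{finite} over $k_1$. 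Embedding the field $k_\infty\cap S_1$ into one such factor therefore bounds its degree over $k_1$, so the $m_n$ are bounded and stabilise at some value $m$; since $\mathrm{Gal}_\sigma(k_n/k_1)$ is cyclic, every finite subextension of $k_\infty/k_1$ is one of the $k_d$, and we conclude $k_\infty\cap S_1=k_m$. I expect this finiteness to be the \emph{main obstacle}: Proposition~\ref{fiber-product} controls each level in isolation, and the content here is exactly the uniform bound on the $m_n$.

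Granting this, the index statement is quick. Now $k_m\subseteq S_1$, so $S_1\cap k_m=k_m$ and Corollary~\ref{fiber-cor-1} gives that $H_m$ is normal in $H_1$ of index $m$. Moreover $k_m\subseteq S_1$ forces $S_m=k_m[Y,1/\mathrm{det}(Y)]=S_1$, so I may identify $H_m=\mathrm{Gal}_\sigma(S_1/k_m)$, which under the Galois correspondence for $\sigma$-PV extensions \cite{vanderput-singer:1997} is the kernel of the surjection $H_1\twoheadrightarrow\mathrm{Gal}_\sigma(k_m/k_1)\cong\mu_m$ recording the action on $x_m$.

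Finally I would identify $H_\infty$ with $H_{\ell m}$. Fix $n=\ell m$; then $k_n\cap S_1=k_m$ and $k_n\cdot S_1=S_n$, so inside $\mathrm{Gal}_\sigma(S_n/k_m)$ the two normal subgroups $H_n=\mathrm{Gal}_\sigma(S_n/k_n)$ and $\mathrm{Gal}_\sigma(S_n/S_1)$ intersect trivially and have product all of $\mathrm{Gal}_\sigma(S_n/k_m)$; restriction to $S_1$ therefore induces an isomorphism $H_{\ell m}=H_n\xrightarrow{\ \sim\ }\mathrm{Gal}_\sigma(S_1/k_m)=H_m$, which in terms of matrices is the identity, so $H_{\ell m}=H_m$ in $\mathrm{GL}_2(\bar{\mathbb{Q}})$ for every $\ell$. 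As the multiples of $m$ are cofinal in $(\mathbb{N},\mid)$, we have $H_\infty=\varprojlim_\ell H_{\ell m}$ with all transition maps these isomorphisms, whence $H_\infty\cong H_m\cong H_{\ell m}$; together with the previous paragraph this shows $H_\infty$ is normal in $H_1$ of index $m$. The only remaining checks---that these restriction maps are genuine $\sigma$-isomorphisms and are mutually compatible---are routine from the Galois correspondence and Proposition~\ref{fiber-product}.
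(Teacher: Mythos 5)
Your proof is correct; note, though, that the paper records Corollary~\ref{fiber-cor-2} without proof, as a consequence of Proposition~\ref{fiber-product} whose full details are deferred to \cite{arreche:2020}, so the comparison is with a derivation the paper only gestures at. Your steps all check out: the directedness $k_n\cap S_1=k_{m_n}$ with $m_n\mid m_{n'}$ for $n\mid n'$; the observation that $k_m\subseteq S_1$ forces $S_m=S_1$ and hence $H_m=\mathrm{Gal}_\sigma(S_1/k_m)=\ker\bigl(H_1\twoheadrightarrow\mathrm{Gal}_\sigma(k_m/k_1)\bigr)$, so Corollary~\ref{fiber-cor-1} gives normality and index $m$; and for $n=\ell m$ the fiber product of Proposition~\ref{fiber-product}, taken over $k_m$ rather than $k_1$, degenerates to a direct product $H_m\times\mu_\ell$ because $S_1\cap k_n=k_m$ --- which is precisely your trivial-intersection/product decomposition of $\mathrm{Gal}_\sigma(S_n/k_m)$ into $H_n$ and $\mathrm{Gal}_\sigma(S_n/S_1)$ --- yielding $H_{\ell m}=H_m$ as matrix groups (equality, not merely isomorphism, which is what the normality and index assertions for $H_\infty$ require), and cofinality finishes it. You also correctly identified the one point of genuine content, the uniform bound on the $m_n$, and your commutative-algebra argument for it is valid: $k_\infty\cap S_1$ is a field (a directed union of the $k_{m_n}$), any idempotent projection embeds it into a domain factor $R_i$ of $S_1$, and the relative algebraic closure of $k_1$ in the finitely generated field $\mathrm{Frac}(R_i)$ is finite over $k_1$. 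There is, however, a shorter route to the same bound, more in the spirit of the Galois-theoretic framework and of the analogous proofs in \cite{arreche:2014b, arreche:2015} that the paper cites: by the same Artin-type surjectivity argument you use for $H_m$, each $H_n$ is a closed normal subgroup of $H_1$ of index $m_n$, and any closed finite-index subgroup of an algebraic group contains $H_1^\circ$, so $m_n$ divides $\lvert H_1/H_1^\circ\rvert$ and the stabilization is immediate; this also shows directly that $m$ divides the order of the component group, consonant with the bicyclicity discussion in \S\ref{hendriks-sec}. Either finiteness argument is acceptable; yours buys independence from algebraic-group facts at the cost of a commutative-algebra digression through Proposition~\ref{idemp}.
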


Having computed the $\sigma$-Galois group $H_\infty$ of \eqref{difeq} over $k_\infty$ as in \cite{hendriks:1997}, we can then compute the $\sigma$-Galois group $H_1$ of \eqref{difeq} over $k_1$ according to the following possibilities. The explicit computation of $H_1$ is obtained in each case as a by-product of our computation of the corresponding differential Galois group of \eqref{difeq} in the following sections.

\begin{prop} \label{hendriks-possibilities}Precisely one of the following possibilities occurs.
\begin{enumerate}
\item There are infinitely many solutions to \eqref{ric1} in $k_1$. In this case, $H_1$ is a subgroup of $\mathbb{G}_m(\bar{\mathbb{Q}})$ (included in $\mathrm{GL}_2(\bar{\mathbb{Q}})$ as scalar matrices).
\item There are exactly two solutions $u_1,u_2\in k_1$ to \eqref{ric1}. In this case, $H_1$ is diagonalizable (but not contained in the group of scalar matrices).
\item There is exactly one solution $u\in k_1$ to \eqref{ric1}. In this case, $H_1$ is reducible but not diagonalizable.
\item There are no solutions to \eqref{ric1} in $k_1$, but there are exactly two solutions $u_1,u_2\in k_2\backslash k_1$ to \eqref{ric1}, and $u_2=\bar{u}_1$ is the Galois conjugate of $u_1$ over $k_1$. In this case, $H_1$ is irreducible and imprimitive.
\item There are no solutions to \eqref{ric1} in $k_2$, and either $a=0$ or there is a solution $e\in k_2$ to \eqref{ric2}. In this case, $H_1$ is irreducible and imprimitive.
\item There are no solutions to \eqref{ric1} nor to \eqref{ric2} in $k_2$ and $a\neq 0$. In this case, $H_1$ is irreducible and primitive, and $\mathrm{SL}_2(\bar{\mathbb{Q}})\subseteq H_1$.
\end{enumerate}
\end{prop}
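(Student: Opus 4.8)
The plan is to show that the six cases are mutually exclusive and exhaustive by analyzing the number and location of solutions to the Riccati equation~\eqref{ric1} (and, when none exist over $k_1$ or $k_2$, to~\eqref{ric2}), and then to translate each solution-counting scenario into a structural statement about $H_1$. The organizing principle is the correspondence already spelled out in Hendriks' analysis recalled above: solutions $u\in k_\infty$ to~\eqref{ric1} correspond to $\sigma$-stable lines (eigenvectors) for the action of $H_\infty$, and hence to reducibility, and the \emph{number} of such solutions pins down how diagonalizable $H_\infty$ is. The essential new ingredient is that we must track where the solutions live---in $k_1$, in $k_2\setminus k_1$, or not even in $k_2$---and use Proposition~\ref{fiber-product} together with Corollary~\ref{fiber-cor-1} and Corollary~\ref{fiber-cor-2} to descend the structural information from $H_\infty$ (over $k_\infty$) to $H_1$ (over $k_1$).

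First I would dispose of the reducible cases, items (1)--(3). A solution $u\in k_1$ to~\eqref{ric1} gives a rational eigenvector, so that~\eqref{mateq} is equivalent over $k_1$ to an upper-triangular system and $H_1$ is reducible. The key point is that the count of solutions in $k_1$ is insensitive to passing to $k_\infty$ in these cases: if $u\in k_1$ solves~\eqref{ric1}, then so does every $\sigma$-conjugate, but since $u$ already lies in $k_1$ it is fixed, so the solutions in $k_1$ are exactly the solutions in $k_\infty$ that happen to be rational over $k_1$. Three or more solutions force infinitely many (a standard consequence of the Riccati equation being equivalent, after clearing the eigenvector, to a first-order inhomogeneous equation whose solution space is an affine line once it is nonempty), and infinitely many solutions correspond to $H_1\subseteq\Gm$ as scalars; exactly two correspond to two independent eigenlines, hence $H_1$ diagonalizable but not scalar; exactly one corresponds to a single stable line, hence $H_1$ reducible but not diagonalizable. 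This gives (1), (2), (3).

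Next I would treat the irreducible cases. If~\eqref{ric1} has no solution in $k_1$, then $H_1$ is irreducible by the descent of Hendriks' irreducibility criterion (\cite[Thm.~13]{hendriks:1997}) combined with Proposition~\ref{fiber-product}: any $H_1$-stable line would produce a solution in $k_1$. Within irreducibility, the imprimitive versus primitive dichotomy is governed by the existence of a pair of lines permuted by $H_1$, equivalently by a solution of the second Riccati equation~\eqref{ric2}, whose solutions parametrize the $\sigma$-stable \emph{pairs} of lines via the equivalence~\eqref{impequiv}. The genuinely new subtlety, and the step I expect to be the main obstacle, is separating items (4) and (5): over $k_\infty$ both manifest as imprimitivity, but over $k_1$ the two eigenlines can be defined either over a quadratic extension and swapped by the nontrivial element of $\mathrm{Gal}(k_2/k_1)$ (item (4), where the two Riccati solutions $u_1,u_2\in k_2\setminus k_1$ are Galois-conjugate), or else be individually irrational while the imprimitivity is detected by a solution $e\in k_2$ to~\eqref{ric2} (item (5)); I would use Corollary~\ref{fiber-cor-1} to show that in exactly one of these situations $H_2\simeq H_1$ fails in a way that distinguishes them, pinning down whether the bicyclic component group of $H_1$ arises from conjugate eigenvectors or from the permutation of an unordered pair. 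Finally, if neither~\eqref{ric1} nor~\eqref{ric2} has a solution in $k_2$ and $a\neq 0$, then $H_\infty\supseteq\SL_2$ by Hendriks, and since $H_\infty$ is normal of finite index in $H_1$ by Corollary~\ref{fiber-cor-2} while $\SL_2$ admits no proper subgroup of finite index containing it, we get $\SL_2\subseteq H_1$, yielding (6). Exhaustiveness then follows because every equation falls into exactly one branch of this decision tree (a solution to~\eqref{ric1} exists in $k_1$, or not; if not, in $k_2$, or not; if not in $k_2$, a solution to~\eqref{ric2} exists in $k_2$, or not), and mutual exclusivity is built into the counts.
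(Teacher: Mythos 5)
Your overall strategy (count the solutions of the Riccati equation \eqref{ric1}, track the field each solution lives in, and descend structural information about $H_\infty$ to $H_1$) is the paper's strategy, but note that the paper's own proof of this proposition only establishes mutual exclusivity and exhaustiveness, deliberately deferring every structural claim about $H_1$ to \S\ref{diagonalizable-sec}--\S\ref{large-sec}; your attempt to prove the structure claims inline is where the genuine gaps appear. The first and most serious gap: you never invoke the key descent result \cite[Thm.~15]{hendriks:1997}, that any solution of \eqref{ric1} (resp.\ \eqref{ric2}) in $k_\infty$ already yields a solution in $k_2$, together with the companion fact (extracted by the paper from the proofs of \cite[Thm.~15]{hendriks:1997} and \cite[Thm.~4.2]{hendriks:1998}) that infinitely many solutions in $k_\infty$ force at least three, hence infinitely many, in $k_1$. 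Without these, your decision tree tested only over $k_1$ and $k_2$ does not control $H_\infty$: nothing in your argument excludes a solution of \eqref{ric1} lying in $k_4\setminus k_2$, which would land you in branch (5) or (6) while $H_\infty$ is in fact reducible, destroying the conclusions ``$H_1$ irreducible and imprimitive'' and ``$\mathrm{SL}_2(\bar{\mathbb{Q}})\subseteq H_1$''; likewise nothing excludes exactly two solutions in $k_1$ coexisting with infinitely many in $k_\infty$, undermining case (2). Your assertion that ``the count of solutions in $k_1$ is insensitive to passing to $k_\infty$'' is precisely the statement that needs proof, and it is exactly what these two theorems of Hendriks supply.

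The second gap is your plan for separating cases (4) and (5): you propose to use Corollary~\ref{fiber-cor-1} to show that ``$H_2\simeq H_1$ fails in a way that distinguishes them,'' but this cannot work as stated, because $H_2$ has index exactly $2$ in $H_1$ both in case (4) (shown in \S\ref{imprimitive-sec:1} via an antidiagonal element $\tau$ induced by $x_2\mapsto -x_2$) and in the sub-case of (5) where \eqref{ric2} has only a conjugate pair of solutions in $k_2\setminus k_1$ (shown in \S\ref{imprimitive-sec:3}). The correct distinguishing feature is simply whether \eqref{ric1} has solutions in $k_2$, i.e., whether $H_2$ is diagonalizable or irreducible---which is already encoded in the hypotheses of (4) versus (5)---and the identification of $H_1$ as irreducible and imprimitive in these cases is a substantive argument (the explicit gauge computation producing an antidiagonal matrix, plus the classification of Lemma~\ref{imprimitive-classification} constrained by the bicyclicity of $H_1/H_1^\circ$ from \cite[Prop.~12.2(1)]{vanderput-singer:1997}), not a formal consequence of the fiber-product machinery. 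A minor point: in case (6) the normality and finite-index considerations are unnecessary, since once one knows (again via \cite[Thm.~15]{hendriks:1997}) that \eqref{ric1} and \eqref{ric2} have no solutions in $k_\infty$, the containments $\mathrm{SL}_2(\bar{\mathbb{Q}})\subseteq H_\infty\subseteq H_1$ give the claim immediately.
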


\begin{proof} It is clear that the possibilities above are mutually exclusive. It remains to show that these possibilities are exhaustive, and that the $\sigma$-Galois group $H_1$ is as stated in each case.

Let us first show that these possibilities are exhaustive. By \cite[Thm.~13]{hendriks:1997}, there are either zero, one, two, or infinitely many solutions to \eqref{ric1} in $k_\infty$. By \cite[Thm.~15]{hendriks:1997}, if there exists a solution $u\in k_\infty$ to the Riccati equation \eqref{ric1}, then there exists a solution in $k_2$. Since the coefficients $a,b\in k=k_1$, for any solution $u\in k_2\backslash k_1$ to \eqref{ric1} the Galois conjugate $\bar{u}:=\tau_{2,1}(u)$ must also satisfy \eqref{ric1}. Hence, if there is exactly one solution $u\in k_\infty$ to \eqref{ric1}, then $u\in k_1$, and if there are exactly two solutions $u_1,u_2\in k_\infty$ to \eqref{ric1}, then either $u_1,u_2\in k_1$, or else $u_1,u_2\in k_2\backslash k_1$ and $u_2=\bar{u}_1$ is the Galois conjugate of $u_1$ over $k_1$. In the case where there are infinitely many solutions to \eqref{ric1} in $k_\infty$, the proof of \cite[Thm.~15]{hendriks:1997} shows that at least three of these solutions actually belong to $k_1$, in which case the proof of \cite[Thm.~4.2]{hendriks:1998} shows that there are infinitely many solutions to \eqref{ric1} in $k_1$. This shows that cases (1)--(4) exhaust the possibilities where there is at least one solution to \eqref{ric1} in $k_\infty$. Supposing now that there are no solutions to \eqref{ric1} in $k_\infty$ and $a\neq 0$, by \cite[Thm.~15]{hendriks:1997} we again have that if there exists at least one solution in $k_\infty$ to \eqref{ric2}, then there exists a solution in $k_2$. This concludes the proof that the possibilities listed in Proposition~\ref{hendriks-possibilities} are exhaustive and mutually exclusive.

The statements corresponding the form of the $\sigma$-Galois group $H_1$ will be established separetely in the following sections according to the possibilities listed above, depending on the existence of solutions to \eqref{ric1} or \eqref{ric2} in $k_1$ or $k_2$ as discussed above.\end{proof}

In view of Proposition~\ref{dense}, in order to compute the $\sigma\delta$-group $G$ of \eqref{difeq}, we will first apply the results of \cite{hendriks:1997} to compute the solutions to \eqref{ric1} and/or \eqref{ric2} in $k_2$, which according to the possibilities in Proposition~\ref{hendriks-possibilities} (and as we will show in each case in the following sections) results in knowing whether the corresponding $\sigma$-Galois group $H$ is: diagonalizable; reducible (but not diagonalizable); irreducible and imprimitive; or irreducible and primitive. We will then compute the additional $\delta$-algebraic equations that define $G$ as a subgroup of $H$ in each case (and obtain the explicit computation of $H$ itself along the way). In order to apply the theory of \cite{hardouin-singer:2008} to study \eqref{difeq}, we will consider \eqref{difeq} as a difference equation over the larger basefield $C(x)$ mentioned at the beginning of this section, where we recall $C$ is a $\delta$-closed field extension of $(\bar{\mathbb{Q}},\delta)$ such that $C^\delta=\bar{\mathbb{Q}}$ (the existence of such a $C$ is guaranteed by \cite{kolchin:1974, trushin:2010}), and the $\sigma\delta$-structure of $C(x)$ extends that of $\bar{\mathbb{Q}}(x)$: $\sigma$ is the $C$-linear automorphism of $C(x)$ defined by $\sigma(x)=qx$.

\begin{rem}[Descent from $C(x)$ to $\bar{\mathbb{Q}}(x)$] \label{c-ok} The application of the results of \cite{hendriks:1997} and Proposition~\ref{hendriks-possibilities} to compute the $\sigma$-Galois group of \eqref{difeq} over $C(x)$, rather than over $\bar{\mathbb{Q}}(x)$, requires some justification. The point is that the number of solutions to the Riccati equations \eqref{ric1} and \eqref{ric2} in $C(x_2)$ is the same as the number of solutions in $\bar{\mathbb{Q}}(x_2)$. This follows from an elementary argument: suppose that a given polynomial $\sigma$-equation over $\bar{\mathbb{Q}}(x)$ admits a solution $\frac{p}{q}\in C(x)$, where $p=a_nx^n+\dots+a_1x+a_0$ and $q=b_mx^m+\dots+b_1x+b_0$. This is equivalent to the coefficients $a_i$ and $b_j$ satisfying $b_m\neq 0$ and a system of polynomial equations defined over $\bar{\mathbb{Q}}$, which defines an affine algebraic variety $V$ over $\bar{\mathbb{Q}}$. Since $\bar{\mathbb{Q}}$ is algebraically closed and $C$ is countable, $V(C)$ and $V(\bar{\mathbb{Q}})$ must have the same cardinality.

Moreover, the possible defining equations for the $\sigma$-Galois groups of \eqref{difeq} over $\bar{\mathbb{Q}}(x)$ and over $C(x)$, whether in the reducible, irreducible and imprimitive, or irreducible and primitive cases, are all witnessed by monomial relations among (the standard form of) elements in $\bar{\mathbb{Q}}(x)$. Though we will see this explicitly in each situation in the following sections, it is worthwhile to emphasize now that the $\sigma$-Galois group of \eqref{difeq} over $C(x)$ consists of the $C$-points of the $\sigma$-Galois group over $\bar{\mathbb{Q}}(x)$, i.e., the former is defined as an algebraic subgroup of $\mathrm{GL}_2(C)$ by the same algebraic equations defining the latter as an algebraic subgroup of $\mathrm{GL}_2(\bar{\mathbb{Q}})$.\end{rem}


\section{Diagonalizable groups} \label{diagonalizable-sec} We recall the notation introduced in the previous sections: $k=C(x)$, $C$ is a $\delta$-closure of $\bar{\mathbb{Q}}$ with $C^\delta=\bar{\mathbb{Q}}$, $\sigma$ denotes the $C$-linear automorphism of $k$ defined by $\sigma(x)=qx$, and $\delta(x)=x$. Let us first suppose that there exist at least two distinct solutions $u_1,u_2\in\bar{\mathbb{Q}}(x)$ to the Riccati equation \eqref{ric1} as in items~(1) or (2) of Proposition~\ref{hendriks-possibilities}. Then \eqref{mateq} is equivalent over $\bar{\mathbb{Q}}(x)$ to $$\sigma(Y)=\begin{pmatrix} u_1 & 0 \\ 0 & u_2\end{pmatrix}Y,$$ in view of the following remark.

\begin{rem} \label{imprimitive-gauge} Given two distinct solutions $u_1$ and $u_2$ to \eqref{ric1}, the gauge transformation (which is different from the one specified in the proof of \cite[Thm.~4.2]{hendriks:1998}) \[T:=\frac{1}{u_1-u_2}\cdot\begin{pmatrix}u_2 & -1 \\ u_1 & -1\end{pmatrix}\] satisfies $\sigma(T)AT^{-1}=\left(\begin{smallmatrix} u_1 & 0 \\ 0 & u_2\end{smallmatrix}\right)$.\end{rem}

In this case, we compute $G$ with the following result.

\begin{prop} \label{galdiag} Assume that $u_1,u_2\in \bar{\mathbb{Q}}(x)$ are both different from $0$, and let $R$ be the $\sigma\delta$-PV ring over $k$ corresponding to the system\begin{equation}\label{diagonal-mateq}\sigma(Y)=\begin{pmatrix} u_1 & 0 \\ 0 & u_2\end{pmatrix}Y.\end{equation} Then $G=\mathrm{Gal}_{\sigma\delta}(R/k)$ is the subgroup of \begin{equation}\label{galdiag-eq}\mathbb{G}_m(C)^2=\left\{\begin{pmatrix} \alpha_1 & 0 \\ 0 & \alpha_2\end{pmatrix} \ \middle| \ \alpha_1,\alpha_2\in C, \ \alpha_1\alpha_2\neq 0 \right\}\end{equation} defined by the following conditions on $\alpha_1$ and $\alpha_2$.

\begin{enumerate}
\item[(i)] There exist $m_1,m_2\in\mathbb{Z}$, not both zero, and $f\in\bar{\mathbb{Q}}(x)^\times$ such that $u_1^{m_1}u_2^{m_2}=\frac{\sigma(f)}{f}$ if and only if $\alpha_1^{m_1}\alpha_2^{m_2}=1$.
\item[(ii)] There exist $m_1,m_2\in\mathbb{Z}$, not both zero and with $\mathrm{gcd}(m_1,m_2)=1$; $c\in\mathbb{Z}$; and $f\in \bar{\mathbb{Q}}(x)$ such that $m_1\frac{\delta(u_1)}{u_1}+m_2\frac{\delta(u_2)}{u_2}=\sigma(f)-f+c$ if and only if $\delta(m_1\frac{\delta(\alpha_1)}{\alpha_1}+m_2\frac{\delta(\alpha_2)}{\alpha_2})=0$. Moreover, $c=0$ if and only if $\delta(\alpha_1^{m_1}\alpha_2^{m_2})=0$.
\item[(iii)] If neither of the conditions above is satisfied, then $G=H=\mathbb{G}_m(C)^2$.
\end{enumerate}
\end{prop}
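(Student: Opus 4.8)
The strategy is to exploit the diagonal structure of \eqref{diagonal-mateq}. Since $G=\mathrm{Gal}_{\sigma\delta}(R/k)$ is a $\delta$-algebraic subgroup of $\mathbb{G}_m(C)^2$ via the faithful representation $\gamma\mapsto(\alpha_1,\alpha_2)$, where $\gamma(z_i)=\alpha_i z_i$ for a fundamental solution matrix $Z=\mathrm{diag}(z_1,z_2)$ with $\sigma(z_i)=u_iz_i$, the plan is to translate each defining condition on $(\alpha_1,\alpha_2)$ into a statement about $\delta$-algebraic relations among $z_1,z_2$ over $k$, and then invoke the Galois correspondence (Theorem~\ref{correspondence}) together with the order-one criteria of \S\ref{order1-sec}.

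First I would record the two key principles. By the Galois correspondence, a monomial relation $z_1^{m_1}z_2^{m_2}\in k$ holds if and only if $\gamma(z_1^{m_1}z_2^{m_2})=z_1^{m_1}z_2^{m_2}$ for all $\gamma\in G$, i.e.\ $\alpha_1^{m_1}\alpha_2^{m_2}=1$ on $G$; and $z_1^{m_1}z_2^{m_2}\in k^\times$ happens precisely when $\sigma(z_1^{m_1}z_2^{m_2})=z_1^{m_1}z_2^{m_2}$ up to the factor $u_1^{m_1}u_2^{m_2}$, which forces $u_1^{m_1}u_2^{m_2}=\sigma(f)/f$ for $f=z_1^{m_1}z_2^{m_2}\in\bar{\mathbb{Q}}(x)^\times$ (the descent to $\bar{\mathbb{Q}}(x)$ is justified by Remark~\ref{c-ok}). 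This yields part~(i). For part~(ii), the relevant $\delta$-relations are not multiplicative but come from Corollary~\ref{diffprod}: the elements $z_1,z_2$ are $\delta$-dependent over $k$ exactly when there exist integers $m_1,m_2,c$ and $f\in k$ with $m_1\tfrac{\delta(u_1)}{u_1}+m_2\tfrac{\delta(u_2)}{u_2}=\sigma(f)-f+c$. The task is then to match this analytic criterion to the group-theoretic condition on $(\alpha_1,\alpha_2)$. Applying $\delta$ to $\sigma(z_i)=u_iz_i$ gives $\sigma\bigl(\tfrac{\delta(z_i)}{z_i}\bigr)=\tfrac{\delta(z_i)}{z_i}+\tfrac{\delta(u_i)}{u_i}$, and one checks that the combination $m_1\tfrac{\delta(z_1)}{z_1}+m_2\tfrac{\delta(z_2)}{z_2}-f$ is $\sigma$-invariant (hence lies in $k^\sigma=C$) precisely when the displayed relation holds; differentiating once more and applying the structure of differential algebraic subgroups of $\mathbb{G}_m$ (Proposition~\ref{classification}) converts this into $\delta\bigl(m_1\tfrac{\delta(\alpha_1)}{\alpha_1}+m_2\tfrac{\delta(\alpha_2)}{\alpha_2}\bigr)=0$ as a defining equation for $G$.

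The most delicate point will be the refinement ``$c=0$ if and only if $\delta(\alpha_1^{m_1}\alpha_2^{m_2})=0$'' in part~(ii). Here one must track the constant $c$ separately: the condition $c=0$ means that $m_1\tfrac{\delta(u_1)}{u_1}+m_2\tfrac{\delta(u_2)}{u_2}$ is itself of the form $\sigma(f)-f$, which by Proposition~\ref{diffsum} makes $\tfrac{\delta(z_1^{m_1}z_2^{m_2})}{z_1^{m_1}z_2^{m_2}}$ equal to $\delta(f)+e$ for some $e\in C$, equivalently $\delta\bigl(\tfrac{\delta(\alpha_1^{m_1}\alpha_2^{m_2})}{\alpha_1^{m_1}\alpha_2^{m_2}}\bigr)=0$ with the right normalization yielding $\delta(\alpha_1^{m_1}\alpha_2^{m_2})=0$ on $G$. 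Disentangling the role of $c$ from the role of the first-order versus second-order $\delta$-conditions, and verifying that these are genuinely the \emph{only} defining equations (so that $G$ is exactly the indicated subgroup and not something smaller), is where the Zariski-density input of Proposition~\ref{dense} and the classification in Proposition~\ref{classification} must be combined carefully. Finally, part~(iii) is immediate: if no nontrivial integer relation of either type exists, then no proper $\delta$-algebraic condition cuts $G$ down, so $z_1,z_2$ are $\delta$-independent, forcing $G=H=\mathbb{G}_m(C)^2$.
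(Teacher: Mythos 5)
Your outline reproduces the paper's argument for item~(i), item~(iii), and the forward implications of item~(ii): the identification $\gamma(z_i)=\alpha_iz_i$, the use of Theorem~\ref{correspondence} to detect monomial invariants (with descent of $f$ to $\bar{\mathbb{Q}}(x)$ via Remark~\ref{c-ok}), the elements $g_i=\delta(z_i)/z_i$ satisfying $\sigma(g_i)-g_i=\delta(u_i)/u_i$, and the appeal to Corollary~\ref{diffprod} together with \cite[Prop.~6.26]{hardouin-singer:2008} to get $G=\mathbb{G}_m(C)^2$ when no relation exists are all exactly the paper's steps.

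The genuine gap is the converse direction of item~(ii), precisely the point you flag as delicate and then defer to ``Zariski-density and the classification combined carefully.'' Suppose $\delta\bigl(m_1\tfrac{\delta(\alpha_1)}{\alpha_1}+m_2\tfrac{\delta(\alpha_2)}{\alpha_2}\bigr)=0$ holds on $G$ for a \emph{given} coprime pair $(m_1,m_2)$. The Galois correspondence gives $m_1\delta(g_1)+m_2\delta(g_2)=g\in k$, hence $m_1\delta\bigl(\tfrac{\delta(u_1)}{u_1}\bigr)+m_2\delta\bigl(\tfrac{\delta(u_2)}{u_2}\bigr)=\sigma(g)-g$; but one must still ``integrate'' this to produce $f\in k$ and, crucially, an \emph{integer} $c$ with $m_1\tfrac{\delta(u_1)}{u_1}+m_2\tfrac{\delta(u_2)}{u_2}=\sigma(f)-f+c$. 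Neither Proposition~\ref{dense} nor Proposition~\ref{classification} can deliver this: they constrain the possible shapes of $G$ but say nothing about whether the telescoping obstruction for this particular rational function is an integer constant. Nor can you invoke Corollary~\ref{diffprod} as a black box at this stage, since it only yields \emph{some} pair $(n_1,n_2)$, not the pair $(m_1,m_2)$ appearing in the group equation. The paper closes this step with the $q$-discrete-residue calculus: Proposition~\ref{indsum} applied to $m_1\delta\bigl(\tfrac{\delta(u_1)}{u_1}\bigr)+m_2\delta\bigl(\tfrac{\delta(u_2)}{u_2}\bigr)=\sigma(g)-g$ forces the order-$2$ residues to vanish, Lemma~\ref{qres-computation} then shows that the order-$1$ residues of $\tfrac{\delta(u_1^{m_1}u_2^{m_2})}{u_1^{m_1}u_2^{m_2}}$ vanish at every finite $q^{\mathbb{Z}}$-orbit, and $c$ is taken to be the residue at infinity as in \eqref{diagonal-crels}, which is an integer because the $q$-discrete residue at infinity of $\delta(u)/u$ for $u\in\bar{\mathbb{Q}}(x)^\times$ is an integer; a second application of Proposition~\ref{indsum} then supplies $f$. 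Without this step (or an equivalent substitute) the equivalence in item~(ii) is unproved, and in particular the integrality of $c$ --- on which the case analysis of Remark~\ref{diagonalizable-rem} later depends --- is never established.
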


\begin{proof} We begin by observing that, if we can find $ f\in k$ witnessing the relations in items (i) or (ii), then we may take $f\in \bar{\mathbb{Q}}(x)$, since $u_i\in\bar{\mathbb{Q}}(x)$ (cf. \cite[Lem.~2.4, Lem.~2.5]{hardouin:2008} and Remark~\ref{c-ok}). Note that by Theorem~\ref{nnc}, $R^\sigma=C$. Let $y_1,y_2\in R$ be non-zero elements such that $\sigma(y_i)=u_iy_i$. Then $\left(\begin{smallmatrix}y_1 & 0 \\ 0 & y_2\end{smallmatrix}\right)$ is a fundamental solution matrix for \eqref{diagonal-mateq}, so $y_1,y_2\in R^\times$ and the embedding of $G$ into \eqref{galdiag-eq} is given by $\gamma(y_i)=\alpha_{\gamma,i}y_i$ for $i=1,2$ and $\gamma\in G$.

The proof of item~(i) is standard: given $m_1,m_2\in\mathbb{Z}$ we have that $\alpha_{\gamma,1}^{m_1}\alpha_{\gamma,2}^{m_2}=1$ for every $\gamma\in G$ if and only if $\gamma(y_1^{m_1}y_2^{m_2})=y_1^{m_1}y_2^{m_2}$ for every $\gamma \in G$. By Theorem~\ref{correspondence}, this is equivalent to $y_1^{m_1}y_2^{m_2}=f\in k$, which in turn is equivalent to $\frac{\sigma(f)}{f}=u_1^{m_1}u_2^{m_2}$.

Setting $\frac{\delta(y_i)}{y_i}=:g_i\in R$ for $i=1,2$, we see that \begin{equation}\label{diagonal-grels}\sigma(g_i)-g_i=\frac{\delta(u_i)}{u_i} \qquad \text{and}\qquad \gamma(g_i)=g_i+\frac{\delta(\alpha_{\gamma,i})}{\alpha_{\gamma,i}}.\end{equation} By Corollary~\ref{diffprod}, $y_1$ and $y_2$ are differentially dependent over $k$ if and only if there exist $m_1,m_2\in\mathbb{Z}$, not both zero and with $\mathrm{gcd}(m_1,m_2)=1$, $c\in\mathbb{Z}$, and $f\in k$ such that \begin{equation}\label{diagonal-urels}m_1\frac{\delta(u_1)}{u_1}+m_2\frac{\delta(u_2)}{u_2}=\sigma(f)-f+c.\end{equation} Hence, if there do not exist such $m_1,m_2,c\in\mathbb{Z}$ and $f\in k$, $y_1$ and $y_2$ are $\delta$-independent over $k$, which implies that $ G=\mathbb{G}_m(C)^2$ by \cite[Prop.~6.26]{hardouin-singer:2008}. This proves item~(iii).

Let us establish item~(ii). It follows from \eqref{diagonal-grels} that for any $m_1,m_2\in\mathbb{Z}$ we have \begin{gather*}\sigma(m_1g_1+m_2g_2)-(m_1g_1+m_2g_2)=m_1\frac{\delta(u_1)}{u_1}+m_2\frac{\delta(u_2)}{u_2};\qquad\text{and} \\ \gamma(m_1g_1+m_2g_2)=(m_1g_1+m_2g_2)+m_1\frac{\delta(\alpha_{\gamma,1})}{\alpha_{\gamma,1}}+ m_2\frac{\delta(\alpha_{\gamma,2})}{\alpha_{\gamma,2}}.\end{gather*}

Suppose there exists $f\in  k$ satisfying \eqref{diagonal-urels} with $c=0$ and $\mathrm{gcd}(m_1,m_2)=1$. Then $m_1g_1+m_2g_2-f\in k^\sigma$, which implies that $m_1g_1+m_2g_2\in k$ and therefore $\delta(\alpha_{\gamma,1}^{m_1}\alpha_{\gamma,2}^{m_2})=0$ for every $\gamma\in G$. On the other hand, if $\delta(\alpha_{\gamma,1}^{m_1}\alpha_{\gamma,2}^{m_2})=0$ for every $\gamma\in G$ with at least one $m_i\neq 0$, then the same relation holds after replacing $m_i$ with $\frac{m_i}{\mathrm{gcd}(m_1,m_2)}$  and we see that $m_1g_1+m_2g_2=f\in k$ satisfies \eqref{diagonal-urels} with $c=0$.

More generally, suppose there exist $f\in k$ and $c\in\mathbb{Z}$ satisfying \eqref{diagonal-urels} with $\mathrm{gcd}(m_1,m_2)=1$. Then we see that $m_1\delta(g_1)+m_2\delta(g_2)-\delta(f)\in k^\sigma$, and therefore $m_1\delta(g_1)+m_2\delta(g_2)\in k$, which implies that \begin{equation}\label{diagonal-arels}\delta\left(m_1\frac{\delta(\alpha_{\gamma,1})}{\alpha_{\gamma,1}}+ m_2\frac{\delta(\alpha_{\gamma,2})}{\alpha_{\gamma,2}}\right)=0\qquad \text{for every} \qquad \gamma\in G.\end{equation} On the other hand, assuming \eqref{diagonal-arels} with at least one $m_i\neq 0$, then the same relation holds after replacing $m_i$ with $\frac{m_i}{\mathrm{gcd}(m_1,m_2)}$, and we have that $m_1\delta(g_1)+m_2\delta(g_2)=g\in k$, and therefore \[m_1\delta\left(\frac{\delta(u_1)}{u_1}\right)+m_2\delta\left(\frac{\delta(u_2)}{u_2}\right)=\sigma(g)-g.\] By Proposition~\ref{indsum}, for each $\beta\in\mathbb{Q}^\times$ we have that \[0=q\text{-}\mathrm{dres} \left(\delta\left(\frac{\delta(u_1^{m_1}u_2^{m_2})}{u_1^{m_1}u_2^{m_2}}\right),[\beta]_q,2\right)=-\beta\cdot q\text{-}\mathrm{dres} \left(\frac{\delta(u_1^{m_1}u_2^{m_2})}{u_1^{m_1}u_2^{m_2}},[\beta]_q,1\right),\] where the second equality follows from Lemma~\ref{qres-computation}. Hence, letting \begin{equation}\label{diagonal-crels}c:=q\text{-}\mathrm{dres}\left(\frac{\delta(u_1^{m_1}u_2^{m_2})}{u_1^{m_1}u_2^{m_2}},\infty\right)=m_1\cdot q\text{-}\mathrm{dres}\left(\frac{\delta(u_1)}{u_1},\infty\right)+ m_2\cdot q\text{-}\mathrm{dres}\left(\frac{\delta(u_2)}{u_2},\infty\right),\end{equation} we have that $c\in\mathbb{Z}$ and every $q$-discrete residue of $m_1\frac{\delta(u_1)}{u_1}+m_2\frac{\delta(u_2)}{u_2}-c$ is $0$. By another application of Proposition~\ref{indsum}, there exists $f\in k$ satisfying \eqref{diagonal-urels} with $c$ as in \eqref{diagonal-crels}. \end{proof}

\begin{rem} \label{diagonalizable-rem} To compute the difference-differential Galois group $G$ for \eqref{difeq} when there exist at least two distinct solutions $u_1,u_2\in\bar{\mathbb{Q}}(x)$ to the Riccati equation \eqref{ric1}, we apply Proposition~\ref{galdiag} as follows. First, compute the $q$-discrete residues $r_i([\beta]_q):=q$-$\mathrm{dres}\left(\frac{\delta(u_i)}{u_i},[\beta]_q,1\right)$ at each $q^\mathbb{Z}$-orbit $[\beta]_q$ for $\beta\in\bar{\mathbb{Q}}^\times$ as in Definition~\ref{DEF:qres} (note these will be zero for any $\beta$ that is neither a zero nor a pole of $u_1$ or $u_2$). Then decide whether there exist relatively prime $m_1,m_2\in\mathbb{Z}$ such that $m_1r_1([\beta]_q)+m_2r_2([\beta]_q)=0$ for every $q^\mathbb{Z}$-orbit $[\beta]_q$ simultaneously (in general this will be an overdetermined linear system over $\bar{\mathbb{Q}}$, so the task is to decide whether there exists a non-zero solution in $\bar{\mathbb{Q}}^2$ and then whether such a solution can be taken to be in $\mathbb{Z}^2$). For any such pair $(0,0)\neq(m_1,m_2)\in\mathbb{Z}^2$, taking $c\in\mathbb{Z}$ as in \eqref{diagonal-crels} the proof of Proposition~\ref{galdiag} shows that there exists $f\in\bar{\mathbb{Q}}(x)$ satisfying \eqref{diagonal-urels}; it is not necessary to determine what the certificate $f$ actually is.

The $\mathbb{Z}$-module $M$ generated by all pairs $(m_1,m_2)\in\mathbb{Z}^2$ as in Proposition~\ref{galdiag}(ii) is free of rank $r\leq 2$. It follows from the proof of Corollary~\ref{diffprod} that $\mathbb{Z}^2/M$ is torsion-free, and therefore also free of rank $2-r$, since if $(dm_1,dm_2)\in M$ then $(m_1,m_2)\in M$ also for any $d\in\mathbb{Z}$. Thus, if the rank of $M$ is $r=2$ then $M=\mathbb{Z}^2$. It follows that the defining equations for $G$ arising from Proposition~\ref{galdiag}(ii) are given by either: a single pair $(m_1,m_2)$, unique up to multiplication by $\pm1$ and with the form of the defining equation determined by whether the corresponding value of $c$ in~\eqref{diagonal-crels} is $0$; or else the two relations corresponding to $(1,0)$ and $(0,1)$, with an additional relation occurring only in case $c_i:=q$-$\mathrm{dres}(\frac{\delta(u_i)}{u_i})\neq 0$ for both $i=1,2$, in which case we obtain an additional relation given by $\delta(\alpha_1^{d_1}\alpha_2^{\varepsilon d_2})=0$ with \begin{equation}\label{diagonal-remark-eq}d_i:=\frac{\mathrm{lcm}(|c_1|,|c_2|)}{c_i}\qquad \text{and}\qquad \varepsilon=\begin{cases} 1 \ \text{if} \ c_1c_2\in \mathbb{Z}_{<0};\\-1 \ \text{if} \ c_1c_2\in\mathbb{Z}_{>0}.\end{cases}\end{equation}

Having computed all possible relations arising from Proposition~\ref{galdiag}(ii), let us now show how to find the possible relations arising from Proposition~\ref{galdiag}(i), and thus determine all defining equations for $G\subseteq\mathbb{G}_m(C)^2$. We still denote by $M\subseteq \mathbb{Z}^2$ the $\mathbb{Z}$-submodule generated by pairs $(m_1,m_2)$ as in Proposition~\ref{galdiag}. If $M=\{(0,0)\}$ then $G=\mathbb{G}_m(C)^2$ as in Proposition~\ref{galdiag}(iii), so from now on we assume $M$ is not trivial. We saw above that either $M=\mathbb{Z}^2$ or else $M=\mathbb{Z}\cdot (m_1,m_2)$ with $\mathrm{gcd}(m_1,m_2)=1$.

Suppose $M=\mathbb{Z}\cdot(m_1,m_2)$. If the value of $c$ given in \eqref{diagonal-crels} is not $0$, then $G$ is defined by the single equation $\delta\left(\frac{\delta(\alpha_1^{m_1}\alpha_2^{m_2})}{\alpha_1^{m_1}\alpha_2^{m_2}}\right)=0$ as in Proposition~\ref{galdiag}(ii). On the other hand, if this $c=0$, then we must decide whether there exist: a primitive $n$-th root of unity $\zeta_n$, integers $r,s$ such that $0\leq r <s$ and $\mathrm{gcd}(r,s)=1$, and $g\in\bar{\mathbb{Q}}(x)^\times$ such that $u_1^{m_1}u_2^{m_2}=\zeta_nq_s^r\frac{\sigma(g)}{g}$. If so, then $G$ is defined by the single equation $(\alpha_1^{m_1}\alpha_2^{m_2})^\ell=1$ as in Proposition~\ref{galdiag}(i), where $\ell:=\mathrm{lcm}(n,s)$, the least common multiple of $n$ and $s$; otherwise, $\alpha_1^{m_1}\alpha_2^{m_2}$ has infinite order in $\mathbb{G}_m(C^\delta)$ for every $\left(\begin{smallmatrix}\alpha_1 & 0 \\ 0 & \alpha_2\end{smallmatrix}\right)\in G$, and $G$ is defined by the single equation $\delta(\alpha_1^{m_1}\alpha_2^{m_2})=0$ as in Proposition~\ref{galdiag}(ii) only.

If $M=\mathbb{Z}^2$, let again $c_i:=q$-$\mathrm{dres}(\frac{\delta(u_i)}{u_i})$. If exactly one $c_i$ is $0$, say $c_1=0\neq c_2$, then we must decide whether there exist:  a primitive $n$-th root of unity $\zeta_n$, integers $r,s$ such that $0\leq r <s$ and $\mathrm{gcd}(r,s)=1$, and $g\in\bar{\mathbb{Q}}(x)^\times$ such that $u_1=\zeta_nq_s^r\frac{\sigma(g)}{g}$. If so, then $G$ is defined by the equations: $\alpha_1^\ell=1$ as in Proposition~\ref{galdiag}(i), with $\ell:=\mathrm{lcm}(n,s)$, and $\delta\left(\frac{\delta(\alpha_2)}{\alpha_2}\right)=0$ as in Proposition~\ref{galdiag}(ii); otherwise, $G$ is defined instead by $\delta(\alpha_1)=0$ and $\delta\left(\frac{\delta(\alpha_2)}{\alpha_2}\right)=0$. The case where $c_2=0\neq c_1$ is analogous. If $c_1,c_2\neq 0$, then we must decide whether there exist:  a primitive $n$-th root of unity $\zeta_n$, integers $r,s$ such that $0\leq r <s$ and $\mathrm{gcd}(r,s)=1$, and $g\in\bar{\mathbb{Q}}(x)^\times$ such that $u_1^{d_1}u_2^{\varepsilon d_2}=\zeta_nq_s^r\frac{\sigma(g)}{g}$, with $d_1,d_2,\varepsilon$ defined as in \eqref{diagonal-remark-eq}. If so, then $G$ is defined by the equations $\delta\left(\frac{\delta(\alpha_i)}{\alpha_i}\right)=0$ for $i=1,2$ as in Proposition~\ref{galdiag}(ii), together with $(\alpha_1^{d_1}\alpha_2^{\varepsilon d_2})^\ell=1$ as in Proposition~\ref{galdiag}(i), where $\ell:=\mathrm{lcm}(n,s)$; otherwise, $G$ is defined by $\delta\left(\frac{\delta(\alpha_i)}{\alpha_i}\right)=0$ for $i=1,2$ only.

The case where $M=\mathbb{Z}^2$ and $c_i:=q$-$\mathrm{dres}(\frac{\delta(u_i)}{u_i})=0$ for both $i=1,2$ is similar in principle: we must decide whether there exist $m_1,m_2\in\mathbb{Z}$, a primitive $n$-th root of unity $\zeta_n$, integers $r,s$ such that $0\leq r<s$ and $\mathrm{gcd}(r,s)=1$, and $g\in\bar{\mathbb{Q}}(x)^\times$ such that $u_1^{m_1}u_2^{m_2}=\zeta_nq_s^r\frac{\sigma(g)}{g}$. If so, then $G$ is defined by $\delta(\alpha_i)=0$ for $i=1,2$ as in Proposition~\ref{galdiag}(ii), together with $(\alpha_1^{m_1}\alpha_2^{m_2})^\ell=1$ as in Proposition~\ref{galdiag}(i), where $\ell:=\mathrm{lcm}(n,s)$; otherwise, $G$ is defined by $\delta(\alpha_i)=0$ for $i=1,2$ only.

The problem of deciding whether $u:=u_1^{m_1}u_2^{m_2}=\zeta_nq_s^r\frac{\sigma(g)}{g}$ as above for a given \emph{single} pair $(m_1,m_2)$ is addressed in \cite[\S3]{hendriks:1997}: a straightforward modification of the algorithm given there allows us to compute a \emph{reduced form} $\tilde{u}=hx^n\frac{p}{q}$ where: $h\in\bar{\mathbb{Q}}^\times$; $n\in\mathbb{Z}$;  $p,q\in\bar{\mathbb{Q}}[x]$ are monic such that $\mathrm{gcd}(p,\sigma^m(q))=1$ for every $m\in\mathbb{Z}$; if $h=\zeta q^t$ for some root of unity $\zeta$ and $t\in\mathbb{Q}$, then $0\leq t<1$; and such that there exists $g\in\bar{\mathbb{Q}}(x)^\times$ with $u=\tilde{u}\frac{\sigma(g)}{g}$ for some $g\in\bar{\mathbb{Q}}(x)^\times$. Thus we only need to check whether $\tilde{u}=\zeta_nq_s^r$.

In the case where $M=\mathbb{Z}^2$ and $c_i:=q$-$\mathrm{dres}(\frac{\delta(u_i)}{u_i})=0$ for both $i=1,2$, one can show that the standard form $\tilde{u}_i=h_i$ with $h_i\in\bar{\mathbb{Q}}^\times$, and one needs to decide whether $h_1$ and $h_2$ are multiplicatively independent modulo $q^\mathbb{Z}$. We do not know how to produce \emph{a priori} bounds on the possible coefficients $(m_1,m_2)$ such that $h_1^{m_1}h_2^{m_2}\in q^\mathbb{Z}$ in general, so in this case only we offer no improvements on the algorithm in \cite[\S4.2]{hendriks:1997}. But in the remaining cases, we have reduced the computation of all the possible relations in Proposition~\ref{galdiag}(i) to checking a finite list of possibilities for $(m_1,m_2)\in\mathbb{Z}^2$, although this requires the ability to compute the $q$-discrete residues of $\frac{\delta(u_i)}{u_i}$ (cf.~\cite[\S2.2]{vanderput-singer:1997}).
\end{rem}


\section{Reducible non-diagonalizable groups} \label{reducible-sec}
We recall the notation introduced in the previous sections: $k=C(x)$, where $C$ is a $\delta$-closure of $\bar{\mathbb{Q}}$, $\sigma$ denotes the $C$-linear automorphism of $k$ defined by $\sigma(x)=qx$, and $\delta(x)=x$.

We now proceed to define the additional notation that we will use throughout this section. We will assume that there exists exactly one solution $u\in\bar{\mathbb{Q}}(x)$ to the Riccati equation \eqref{ric1}, so that the $\sigma$-Galois group $H$ for \eqref{difeq} is reducible but not diagonalizable as in Proposition~\ref{hendriks-possibilities}(3), and the difference operator implicit in \eqref{difeq} factors as $$\sigma^2+a\sigma+b=(\sigma-\tfrac{b}{u})\circ(\sigma - u),$$ as we saw in \S\ref{hendriks-sec}.
This means that there is a $C$-basis of solutions $\{y_1,y_2\}$ in any $\sigma\delta$-PV ring $R$ for \eqref{difeq} such that $y_1,y_2\neq 0$ satisfy $\sigma(y_1)=uy_1$ and $\sigma(y_2)-uy_2=y_0$, where $y_0\neq 0$ satisfies $\sigma(y_0)=\tfrac{b}{u}y_0$. A fundamental solution matrix for \eqref{mateq} is given by \begin{equation}\label{fundsol}\begin{pmatrix} y_1 & y_2 \\ \sigma(y_1) & \sigma(y_2)\end{pmatrix}=\begin{pmatrix} y_1 & y_2 \\ uy_1 & uy_2+y_0\end{pmatrix}.\end{equation}

If we now let $A=\left(\begin{smallmatrix} 0 & 1 \\ -b & -a\end{smallmatrix}\right)$, $T=\left(\begin{smallmatrix} 1-u & 1 \\ -u & 1\end{smallmatrix}\right)$, and $v=\tfrac{b}{u}=-\sigma(u)-a$ (since $u$ satisfies \eqref{ric1}), we have that \[\sigma(T)AT^{-1}=\begin{pmatrix} 1-\sigma(u) & 1 \\ -\sigma(u) & 1\end{pmatrix}\begin{pmatrix} 0 & 1 \\ -b & -a \end{pmatrix}\begin{pmatrix}1 & -1 \\ u & 1-u\end{pmatrix}  =\begin{pmatrix}u & 1-u+v \\ 0 & v\end{pmatrix}=:B.\] Therefore, the systems \eqref{mateq} and $\sigma(Z)=BZ$ are equivalent (in the sense of Definition~\ref{equivalent-def}), and a fundamental solution matrix for the latter system is given by $$TY=\begin{pmatrix} 1-u & 1 \\ -u & 1\end{pmatrix}\begin{pmatrix} y_1 & y_2 \\ uy_1 & uy_2+y_0\end{pmatrix}=\begin{pmatrix} y_1 & y_2+y_0 \\ 0 & y_0\end{pmatrix}=Z.$$

For any $\gamma\in H$, the $\sigma$-Galois group for \eqref{difeq}, we have that \begin{equation}\label{h-emb} \gamma\begin{pmatrix} y_1 & y_2+y_0 \\ 0 & y_0\end{pmatrix}=\begin{pmatrix} y_1 & y_2+y_0 \\ 0 & y_0\end{pmatrix}\begin{pmatrix} \alpha_\gamma & \xi_\gamma \\ 0 & \lambda_\gamma\end{pmatrix} = \begin{pmatrix} \alpha_\gamma y_1 & \xi_\gamma y_1+\lambda_\gamma y_2+\lambda_\gamma y_0 \\ 0 & \lambda_\gamma y_0\end{pmatrix},\end{equation} and therefore the action of $H$ on the solutions is defined by \begin{equation}\label{h-action}\gamma(y_1)=\alpha_\gamma y_1; \qquad \gamma(y_0)=\lambda_\gamma y_0; \qquad \text{and}\qquad \gamma(y_2)=\lambda_\gamma y_2 + \xi_\gamma y_1.\end{equation} It will be convenient to define the auxiliary elements \begin{equation} \label{zw-def} w=\frac{y_0}{uy_1}  \qquad \text{and}\qquad z=\frac{y_2}{y_1},\end{equation} on which $\sigma$ acts via \begin{gather} \label{zw-sigma}  \sigma(w)=\frac{b}{u\sigma(u)}w; \qquad \sigma(z)=z+w, \intertext{and $H$ acts via}  \label{zw-h}   \gamma(w)=\frac{\lambda_\gamma}{\alpha_\gamma}w; \qquad   \gamma(z)=\frac{\lambda_\gamma}{\alpha_\gamma}z+\frac{\xi_\gamma}{\alpha_\gamma}.\end{gather}
We observe that the $\sigma$-PV ring $$S=k[y_1, y_2+y_0, y_0, (y_1y_0)^{-1}]=k[y_1,w, z, (y_1w)^{-1}]$$ and the $\sigma\delta$-PV ring $$R=k\{y_1,y_2+y_0,y_0,(y_1y_0)^{-1}\}_\delta=k\{y_1,w, z, (y_1w)^{-1}\}_\delta.$$

Our computation of the $\sigma\delta$-Galois group $G$ for \eqref{difeq} in this section will be accomplished by studying the action of $G$ on $y_1$, $w$, and $z$. We begin by defining the \emph{unipotent radicals} \begin{equation} \label{uniprad-def} R_u(H)=H\cap\left\{\begin{pmatrix} 1& \xi \\ 0 & 1\end{pmatrix} \ \middle| \ \xi\in C\right\}\quad\text{and}\quad R_u(G)=G\cap\left\{\begin{pmatrix} 1& \xi \\ 0 & 1\end{pmatrix} \ \middle| \ \xi\in C\right\},\end{equation} and observe that $R_u(H)$ (resp., $R_u(G)$) is an algebraic (resp., differential algebraic) subgroup of $\mathbb{G}_a(C)$, the additive group of $C$. By \cite[Thm.~13(2)]{hendriks:1997}, $R_u(H)=\mathbb{G}_a(C)$ if and only if there exists exactly one solution $u\in k$ to \eqref{ric1}. We observe that $$R_u(G)=\{\gamma\in G \ | \ \gamma(y_i)=y_i \ \text{for} \ i=0,1\}.$$The reductive quotient $$G/R_u(G)\simeq\left\{\begin{pmatrix} \alpha_\gamma & 0 \\ 0 & \lambda_\gamma\end{pmatrix} \ \middle| \ \gamma\in G\right\}$$ is the $\sigma\delta$-Galois group corresponding to the matrix equation \begin{equation}\label{redeq}\sigma(Y)=\begin{pmatrix} u & 0 \\ 0 & v \end{pmatrix}Y,\end{equation} which we compute with Proposition~\ref{galdiag} and Remark~\ref{diagonalizable-rem}.

In the following result, we compute the defining equations for the $\sigma\delta$-Galois group $G$ for \eqref{difeq} in a special case. Recall that $u\in\bar{\mathbb{Q}}(x)$ denotes the unique solution to the Riccati equation \eqref{ric1}, $H$ denotes the $\sigma$-Galois group for \eqref{difeq}, and $w$ is as in \eqref{zw-def}.

\begin{prop} \label{wrat} Suppose there is exactly one solution $u\in k$ to \eqref{ric1} and $H$ is commutative. Then $H$ is a subgroup of \begin{equation} \label{bigut}\mathbb{G}_m(C)\times\mathbb{G}_a(C)=\left\{\begin{pmatrix} \alpha & \xi \\ 0 & \alpha\end{pmatrix} \ \middle| \ \alpha,\xi\in C, \ \alpha\neq 0\right\}\end{equation} with $R_u(H)=\mathbb{G}_a(C)$. Moreover, there exists $ w\in\bar{\mathbb{Q}}(x)$ satisfying \eqref{zw-sigma}, and $G$ is the subgroup of \eqref{bigut} defined by the following conditions on $\alpha$ and $\xi$.
\begin{enumerate}
\item[(i)] There exist $m\in\mathbb{N}$ and $f\in\bar{\mathbb{Q}}(x)^\times$ such that $u^m=\frac{\sigma(f)}{f}$ if and only if $\alpha^m=1$.
\item[(ii)] There exist $c\in\mathbb{Z}$ and $f\in \bar{\mathbb{Q}}(x)$ such that $\frac{\delta(u)}{u}=\sigma(f)-f+c$ if and only if $\delta\left(\frac{\delta(\alpha)}{\alpha}\right)=0$. Moreover, $c=0$ if and only if $\delta(\alpha)=0$.
\item[(iii)] There exist: $c\in\bar{\mathbb{Q}}$; $f\in \bar{\mathbb{Q}}(x)$; and a linear $\delta$-polynomial $\mathcal{L}\in \bar{\mathbb{Q}}\{Y\}_\delta$ such that $\mathcal{L}(\frac{\delta(u)}{u})-w=\sigma(f)-f+c$ if and only if $\delta\left(\frac{\xi}{\alpha}\right)=\mathcal{L}\left(\delta\left(\frac{\delta(\alpha)}{\alpha}\right)\right)$. Moreover, $c=0$ if and only if $\xi=\alpha\mathcal{L}(\frac{\delta(\alpha)}{\alpha})$.
\item[(iv)] If none of the conditions above is satisfied, then $G=H=\mathbb{G}_m(C)\times\mathbb{G}_a(C)$.
\end{enumerate}
\end{prop}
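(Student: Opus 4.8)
The plan is to realize each of the conditions (i)--(iii) as an instance of the Galois correspondence (Theorem~\ref{correspondence}), applied to carefully chosen elements of $R$ built from $y_1$, $w$, and $z$, and then to deduce (iv) by classifying the possibilities left open when (i)--(iii) all fail. I would first settle the structural claims. Since there is exactly one solution $u\in k$ to \eqref{ric1}, \cite[Thm.~13(2)]{hendriks:1997} gives $R_u(H)=\mathbb{G}_a(C)$, so $H$ contains every unipotent matrix $\left(\begin{smallmatrix}1&\mu\\0&1\end{smallmatrix}\right)$. Writing a general element of the reducible group $H$ as $\left(\begin{smallmatrix}\alpha&\xi\\0&\lambda\end{smallmatrix}\right)$ and imposing commutativity against these unipotent elements forces $\alpha\mu=\lambda\mu$ for all $\mu\in C$, hence $\alpha=\lambda$; this places $H$ inside \eqref{bigut}. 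With $\lambda_\gamma=\alpha_\gamma$, the formula \eqref{zw-h} shows $\gamma(w)=w$ for every $\gamma\in H$, so $w$ lies in the fixed ring of the full $\sigma$-Galois group and therefore $w\in k$; a descent argument as in Remark~\ref{c-ok} (or \cite[Lem.~2.4, Lem.~2.5]{hardouin:2008}), using that $\sigma(w)/w=b/(u\sigma(u))\in\bar{\mathbb{Q}}(x)$, lets me take $w\in\bar{\mathbb{Q}}(x)$ satisfying \eqref{zw-sigma}.

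For the defining equations I would set $g_1:=\delta(y_1)/y_1$, so that $\sigma(g_1)-g_1=\delta(u)/u$ and $\gamma(g_1)=g_1+\delta(\alpha_\gamma)/\alpha_\gamma$, while $z$ satisfies $\sigma(z)-z=w$ and $\gamma(z)=z+\xi_\gamma/\alpha_\gamma$ by \eqref{zw-sigma}--\eqref{zw-h}. Condition~(i) is the standard argument: $\alpha^m=1$ on $G$ if and only if $y_1^m$ is $G$-invariant, i.e.\ $y_1^m\in k$ by Theorem~\ref{correspondence}, which is in turn equivalent to $u^m=\sigma(f)/f$. Condition~(ii) is exactly the single-variable case ($m_1=1$, $m_2=0$) of Proposition~\ref{galdiag}(ii) applied to $y_1$ alone, together with its proof via Corollary~\ref{diffprod} and the attendant residue bookkeeping.

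The crux is condition~(iii). For a linear $\delta$-polynomial $\mathcal{L}\in\bar{\mathbb{Q}}\{Y\}_\delta$ I would introduce the auxiliary element $\Theta:=z-\mathcal{L}(g_1)\in R$, for which a direct computation gives $\sigma(\Theta)-\Theta=w-\mathcal{L}(\delta(u)/u)$ and $\gamma(\Theta)=\Theta+\xi_\gamma/\alpha_\gamma-\mathcal{L}(\delta(\alpha_\gamma)/\alpha_\gamma)$. The $c=0$ statement is then immediate: $\xi/\alpha=\mathcal{L}(\delta(\alpha)/\alpha)$ on $G$ if and only if $\Theta$ is $G$-invariant, i.e.\ $\Theta\in k$ (Theorem~\ref{correspondence}), which says precisely that $w-\mathcal{L}(\delta(u)/u)$ is a $\sigma$-coboundary. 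For the general statement I apply $\delta$ (which commutes with $\mathcal{L}$, as $\mathcal{L}$ has constant coefficients): $\delta(\xi/\alpha)=\mathcal{L}(\delta(\delta(\alpha)/\alpha))$ on $G$ if and only if $\delta(\Theta)\in k$, i.e.\ $\delta(w-\mathcal{L}(\delta(u)/u))$ is a coboundary. The technical heart is to show this last condition is equivalent to $\mathcal{L}(\delta(u)/u)-w=\sigma(f)-f+c$ with $c=q\text{-}\mathrm{dres}(\,\cdot\,,\infty)\in\bar{\mathbb{Q}}$: the backward direction is the identity $\delta(\sigma(f)-f+c)=\sigma(\delta f)-\delta f$, while the forward direction uses Proposition~\ref{indsum} together with the residue-shift identity $q\text{-}\mathrm{dres}(\delta\psi,[\beta]_q,j+1)=-j\beta\cdot q\text{-}\mathrm{dres}(\psi,[\beta]_q,j)-(j+1)\cdot q\text{-}\mathrm{dres}(\psi,[\beta]_q,j+1)$ (a computation in the spirit of Lemma~\ref{qres-computation}) and a descending induction on the pole order $j$ to force every finite-orbit $q$-discrete residue of $\psi=w-\mathcal{L}(\delta(u)/u)$ to vanish.

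Finally, for (iv) I would argue that failure of (i)--(iii) leaves only the full group. If (i) and (ii) fail then, by the single-variable form of Corollary~\ref{diffprod} and the classification in Proposition~\ref{classification}, the image $\pi_1(G)\leq\mathbb{G}_m(C)$ of $G$ under $\gamma\mapsto\alpha_\gamma$ (the $\sigma\delta$-Galois group of $\sigma(y_1)=uy_1$) is all of $\mathbb{G}_m(C)$. Since $G$ sits inside $\mathbb{G}_m(C)\times\mathbb{G}_a(C)$ in the coordinates $(\alpha,\xi/\alpha)$, once $\pi_1(G)=\mathbb{G}_m(C)$ and $R_u(G)=\mathbb{G}_a(C)$ every fibre of $\pi_1$ is a full coset of $R_u(G)$, and hence $G=\mathbb{G}_m(C)\times\mathbb{G}_a(C)$; so it remains to show that failure of (iii) forces $R_u(G)=\mathbb{G}_a(C)$. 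Suppose instead that $R_u(G)\subsetneq\mathbb{G}_a(C)$ is cut out by a nonzero linear $\delta$-polynomial $\mathcal{L}_0$ in the additive coordinate $\xi/\alpha$ (Proposition~\ref{classification}). Then $\gamma\mapsto\mathcal{L}_0(\xi_\gamma/\alpha_\gamma)$ is a differential-algebraic homomorphism $G\to\mathbb{G}_a(C)$ vanishing on $R_u(G)$, so it factors through $\pi_1(G)=\mathbb{G}_m(C)$ as $\mathcal{N}(\delta(\alpha)/\alpha)$ for some linear $\delta$-polynomial $\mathcal{N}$; thus $\mathcal{L}_0(z)-\mathcal{N}(g_1)\in k$, i.e.\ $\mathcal{L}_0(w)-\mathcal{N}(\delta(u)/u)$ is a $\sigma$-coboundary. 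The main obstacle is precisely to show, via the residue analysis above and the special shapes of $w\in\bar{\mathbb{Q}}(x)$ and $\delta(u)/u$ (whose finite poles are simple, by Lemma~\ref{qres-computation}), that any such relation reduces to one in which $\mathcal{L}_0$ has order at most $1$ --- equivalently, that $R_u(G)$ can only be trivial, $\mathbb{G}_a(C^\delta)$, or $\mathbb{G}_a(C)$, and that the first two cases are exactly the ones recorded by condition~(iii). This reduction of an a priori arbitrary operator $\mathcal{L}_0$ to order $\leq 1$ is the new technical difficulty flagged in the introduction, and it is what ultimately yields Corollary~\ref{unipotent-possibilities}.
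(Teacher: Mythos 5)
Your setup and items (i)--(iii) track the paper's own proof almost exactly: the same commutativity argument forcing $\alpha=\lambda$ and hence $w\in k$, descending to $w\in\bar{\mathbb{Q}}(x)$; the same citation of Proposition~\ref{galdiag} for (i)--(ii); and your auxiliary element $\Theta=z-\mathcal{L}(g_1)$ is just the negative of the paper's $\mathcal{L}(g)-z$, manipulated through Theorem~\ref{correspondence} in the same way. Your one substitution in (iii) --- proving the forward implication by the residue-shift identity $q\text{-}\mathrm{dres}(\delta\psi,[\beta]_q,j+1)=-j\beta\cdot q\text{-}\mathrm{dres}(\psi,[\beta]_q,j)-(j+1)\cdot q\text{-}\mathrm{dres}(\psi,[\beta]_q,j+1)$ (which is correct for $\delta=x\tfrac{d}{dx}$) and descending induction on pole order, instead of invoking \cite[Prop.~3.10(2.a)]{hardouin-singer:2008} --- is sound, and is in fact the same technique the paper deploys inside the proof of Proposition~\ref{galdiag}(ii).

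The genuine gap is in item (iv). Your quotient-homomorphism reformulation --- factoring $\gamma\mapsto\mathcal{L}_0(\xi_\gamma/\alpha_\gamma)$ through $\pi_1(G)=\mathbb{G}_m(C)$ as $\mathcal{N}(\delta\alpha/\alpha)$ via Cassidy's classification, then landing on $\mathcal{N}\bigl(\tfrac{\delta(u)}{u}\bigr)-\mathcal{L}_0(w)\in(\sigma-1)\bigl(\bar{\mathbb{Q}}(x)\bigr)$ --- is a workable variant of the paper's route via \cite[Cor.~3.2 and Prop.~6.26]{hardouin-singer:2008}, which arrives at the same relation \eqref{reducible-2ops}. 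But you then declare the decisive implication --- that any such relation with $\mathcal{L}_0\neq 0$ yields one of the exact shape $\mathcal{L}(\tfrac{\delta(u)}{u})-w=\sigma(f)-f+c$ demanded by (iii) --- to be ``the main obstacle'' and never prove it. That normalization is the bulk of the paper's proof: writing $m_2=\mathrm{ord}(\mathcal{L}_0)$ and $r$ for the largest pole order of $w$, one compares $q$-discrete residues at order $m_2+r$, where (because $\tfrac{\delta(u)}{u}$ has only simple finite poles, via Lemma~\ref{qres-computation}) only the coefficient of $\delta^{m_2+r-1}$ in $\mathcal{N}$ and the leading coefficient of $\mathcal{L}_0$ contribute; one solves for a coefficient $c_{r-1}$ (a ratio of these two, up to an explicit nonzero factor) so that $q\text{-}\mathrm{dres}\bigl(c_{r-1}\delta^{r-1}(\tfrac{\delta(u)}{u})-w,[\beta]_q,r\bigr)=0$ for all orbits simultaneously, and then descends through the remaining pole orders of $w$ to assemble $\mathcal{L}$, finishing with Proposition~\ref{indsum}. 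Note also that your stated target is slightly off: one does not reduce $\mathrm{ord}(\mathcal{L}_0)$ to $\leq 1$ (nor would order $\leq 1$ alone be equivalent to the trichotomy $R_u(G)\in\{0,\mathbb{G}_a(C^\delta),\mathbb{G}_a(C)\}$ --- e.g.\ $\ker(\delta+a)$ with $a\neq 0$ also has order $1$); rather one produces a new relation in which $w$ appears undifferentiated with coefficient $1$, and Corollary~\ref{unipotent-possibilities} is then read off from (iii) a posteriori (cf.\ Remark~\ref{reducible-rem}), not used as an input. Without executing this residue construction, your proof of (iv), and hence of the proposition, is incomplete.
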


\begin{proof} First recall that when there is exactly one solution $u\in k$ to \eqref{ric1} the $\sigma$-Galois group $H$ of \eqref{difeq} is reducible but not diagonalizable by \cite[Thm.~13]{hendriks:1997}, and therefore $H$ is a non-diagonalizable subgroup of \[\mathbb{G}_m(C)\ltimes\mathbb{G}_a(C)=\left\{\begin{pmatrix} \alpha & \xi \\ 0 & \lambda\end{pmatrix} \ \middle| \ \alpha,\xi,\lambda\in C, \ \alpha\lambda\neq 0\right\}.\] In particular, $R_u(H)=\mathbb{G}_a(C)$ and a straightforward computation shows that $H$ is commutative if and only if it is actually a subgroup of \eqref{bigut}. We recall the notation introduced at the beginning of this section: $v=\frac{b}{u}$, $\{y_1,y_2\}$ is a $C$-basis of solutions for \eqref{difeq} such that $\sigma(y_1)=uy_1$ and $\sigma(y_2)-uy_2=y_0$, where $y_0\neq 0$ satisfies $\sigma(y_0)=vy_0$. The embedding $H\hookrightarrow \mathrm{GL}_2(C):\gamma\mapsto M_\gamma$ is as in \eqref{h-emb}, and the action of $H$ on the solutions is given in \eqref{h-action}. The auxiliary elements $w$ and $z$ are defined as in \eqref{zw-def}; they are acted upon by $\sigma$ as in \eqref{zw-sigma} and by $H$ as in \eqref{zw-h}. The relation $\gamma(w)=\frac{\lambda_\gamma}{\alpha_\gamma}w$ for each $\gamma\in H$ from \eqref{zw-h}, together with Theorem~\ref{correspondence}, imply that $w\in k$. Since $\sigma(w)=\frac{b}{u\sigma(u)}w$ from \eqref{zw-sigma} and $b,u\in\bar{\mathbb{Q}}(x)$, if $w\in k$ we may actually take $w\in \bar{\mathbb{Q}}(x)$ by \cite[Lem.~2.5]{hardouin:2008} (cf.~Remark~\ref{c-ok}). Thus, if we can find $f\in k$ witnessing the relations in items (i) or (ii), then we may take $f\in\bar{\mathbb{Q}}(x)$, as already discussed in the proof of Proposition~\ref{galdiag}, and similarly if we can find $f\in k$ and $c\in C$ witnessing the relation in item~(iii), then we may take $f\in\bar{\mathbb{Q}}(x)$ and $c\in\bar{\mathbb{Q}}$.

Items (i) and (ii) were already established in Proposition~\ref{galdiag}. Let us prove item~(iii). Setting $\frac{\delta(y_1)}{y_1}=:g\in R$ we have that \begin{equation}\label{reducible-urels} \sigma(g)-g=\frac{\delta(u)}{u}\qquad\text{and}\qquad \gamma(g)=g+\frac{\delta(\alpha_\gamma)}{\alpha_\gamma}\end{equation} for $\gamma\in H$ (cf.~the proof of Proposition~\ref{galdiag}). On the other hand, the actions of $\sigma$ and $\gamma\in H$ on the element $z\in R$ defined in \eqref{zw-def} in this case is given by \begin{equation}\label{reducible-wrels} \sigma(z)-z=w\qquad \text{and} \qquad \gamma(z)=z+\frac{\xi_\gamma}{\alpha_\gamma}.\end{equation} Consider the relation stipulated in item~(iii): \begin{equation}\label{reducible-uwrel}\mathcal{L}\left(\frac{\delta(u)}{u}\right)-w=\sigma(f)-f+c,\end{equation} where $\mathcal{L}\in\bar{\mathbb{Q}}\{Y\}_\delta$ is a linear differential polynomial, $f\in\bar{\mathbb{Q}}(x)$, and $c\in\bar{\mathbb{Q}}$. It follows from \eqref{reducible-urels} and \eqref{reducible-wrels} that, for any linear differential polynomial $\mathcal{L}\in C\{Y\}_\delta$ and $\gamma\in G$, we have that \begin{gather*} \sigma\left(\mathcal{L}(g) -z\right)-\left(\mathcal{L}(g)-z\right)=\mathcal{L}\left(\frac{\delta(u)}{u}\right)-w;\qquad\text{and}\\ \gamma\left(\mathcal{L}(g)-z\right)=\left(\mathcal{L}(g)-z\right) + \mathcal{L}\left(\frac{\delta(\alpha_\gamma)}{\alpha_\gamma}\right)-\frac{\xi_\gamma}{\alpha_\gamma}.\end{gather*}

Suppose there exist $f\in\bar{\mathbb{Q}}(x)$ and a linear differential polynomial $\mathcal{L}\in\bar{\mathbb{Q}}\{Y\}_\delta$ satisfying \eqref{reducible-uwrel} with $c=0$. Then $\mathcal{L}(g)-z-f\in k^\sigma$, which implies that $\mathcal{L}(g)-z\in k$, and therefore $\mathcal{L}\left(\frac{\delta(\alpha_\gamma)}{\alpha_\gamma}\right)=\frac{\xi_\gamma}{\alpha_\gamma}$ for every $\gamma\in G$ by Theorem~\ref{correspondence}. On the other hand, if $\mathcal{L}\in\bar{\mathbb{Q}}\{Y\}_\delta$ is a linear differential polynomial such that $\mathcal{L}\left(\frac{\delta(\alpha_\gamma)}{\alpha_\gamma}\right)=\frac{\xi_\gamma}{\alpha_\gamma}$ for every $\gamma\in G$, then $\mathcal{L}(g)-z=f\in k$ satisfies \eqref{reducible-uwrel} with $c=0$.

More generally, suppose there exist $f\in \bar{\mathbb{Q}}(x)$, $c\in \bar{\mathbb{Q}}$, and a linear differential polynomial $\mathcal{L}\in\bar{\mathbb{Q}}\{Y\}_\delta$ satisfying \eqref{reducible-uwrel}. Then we see that $\mathcal{L}(\delta(g))-\delta(z)-\delta(f)\in k_\sigma$, and therefore $\mathcal{L}(\delta(g))-\delta(z)\in k$, which implies that \begin{equation}\label{reducible-lrel}\mathcal{L}\left(\delta\left(\frac{\delta(\alpha_\gamma)}{\alpha_\gamma}\right)\right)=\delta\left(\frac{\xi_\gamma}{\alpha_\gamma}\right)\end{equation} for every $\gamma\in G$. On the other hand, if $\mathcal{L}\in\bar{\mathbb{Q}}\{Y\}_\delta$ is a linear differential polynomial such that \eqref{reducible-lrel} holds for every $\gamma\in G$, then $\mathcal{L}(\delta(g))-\delta(g)=:h\in k$, and therefore the element $\mathcal{L}(g)-z\in R$ is differentially dependent over $k$. It then follows from \cite[Prop.~3.10(2.a)]{hardouin-singer:2008} that there exist $f\in k$ and $c\in C^\delta=\bar{\mathbb{Q}}$ satisfying \eqref{reducible-uwrel}. This concludes the proof of item~(iii).

By \cite[Cor.~3.2]{hardouin-singer:2008}, $g$ and $z$ are differentially dependent over $k$ if and only if there exist linear differential polynomials $\mathcal{L}_1,\mathcal{L}_2\in\bar{\mathbb{Q}}\{Y\}_\delta$, not both zero, and $\tilde{f}\in\bar{\mathbb{Q}}(x)$, such that \begin{equation}\label{reducible-2ops}\mathcal{L}_1\left(\frac{\delta(u)}{u}\right)-\mathcal{L}_2(w)=\sigma(\tilde{f})-\tilde{f}.\end{equation} Hence if there do not exist such $\mathcal{L}_i$ and $\tilde{f}$, the elements $g,z\in R$ are differentially independent over $k$, which implies that $G=\mathbb{G}_m(C)\ltimes\mathbb{G}_a(C)$ by \cite[Prop.~6.26]{hardouin-singer:2008}. Thus, assume there do exist $\mathcal{L}_1,\mathcal{L}_2\in\bar{\mathbb{Q}}\{Y\}_\delta$, not both zero, and $\tilde{f}$ satisfying \eqref{reducible-2ops}. If $\mathcal{L}_2=0$, then $\mathcal{L}_1\neq 0$ and it follows from \eqref{reducible-2ops} and \eqref{reducible-urels} that $g$ is differentially dependent over $k$, and therefore so is $y_1$. By Corollary~\ref{diffprod}, this implies that there exist $f\in\bar{\mathbb{Q}}(x)$ and $c\in\mathbb{Z}$ such that $\frac{\delta(u)}{u}=\sigma(f)-f+c$, as in item~(ii). To prove item~(iv), let us show that if there exist linear differential polynomials $\mathcal{L}_1,\mathcal{L}_2\in\bar{\mathbb{Q}}\{Y\}_\delta$ with $\mathcal{L}_2\neq 0$ and $\tilde{f}\in\bar{\mathbb{Q}}(x)$ satisfying \eqref{reducible-2ops}, then we can construct a linear $\delta$-polynomial $\mathcal{L}\in \bar{\mathbb{Q}}\{Y\}_\delta$ and $c \in \bar{\mathbb{Q}}$ such that 
$$\mathcal{L}\left(\frac{\delta(u)}{u}\right)-w=\sigma(f)-f + c\quad\text{for some}\quad f\in \bar{\mathbb{Q}}(x),$$  
as in item~(iii). Let $\mathrm{ord}(\mathcal{L}_i)=m_i$ and $\mathcal{L}_i=\sum_{j=0}^{m_i}c_{i,j}\delta^j(Y)$ for $i=1,2$; if $\mathcal{L}_1=0$, we set $m_1=0$, and we adopt the convention that $c_{i,j}:=0$ for every $j>m_i$. 
By Proposition~\ref{indsum}, the existence of $\tilde{f}\in k$ as in \eqref{reducible-2ops} implies that 
\begin{equation}\label{wrat-5} 
0=q\text{-}\mathrm{dres} \left(\mathcal{L}_1\left(\frac{\delta(u)}{u}\right)-\mathcal{L}_2(w), [\beta]_q, j\right)
\end{equation} 
for every $q^\mathbb{Z}$-orbit $[\beta]_q$ with $\beta\in\bar{\mathbb{Q}}^\times$ and every $j\in\mathbb{N}$. 
Let $r\in\mathbb{N}$ be the largest order such that 
$$q\text{-}\mathrm{dres} (w,[\beta]_q,r)\neq 0 \quad \text{for some} \quad q^\mathbb{Z}\text{-orbit} \quad [\beta]_q.$$ 
Then it follows from~\eqref{wrat-5} and Lemma~\ref{qres-computation} that, for each $q^\mathbb{Z}$-orbit $[\beta]_q$ with $\beta\in\bar{\mathbb{Q}}^\times$, 
the $q$-discrete residues 
\begin{gather*}
c_{1,m_2+r-1}(-1)^{m_2+r-1}(m_2+r-1)! \beta^{m_2 + r - 1} q\text{-}\mathrm{dres} \!\bigl(\tfrac{\delta(u)}{u},[\beta]_q,1\bigr) = q\text{-}\mathrm{dres} \!\Bigl(\mathcal{L}_1\bigl(\tfrac{\delta(u)}{u}\bigr),[\beta]_q,m_2+r\Bigr) 
\intertext{and} 
c_{2,m_2}(-1)^{m_2}\frac{(m_2+r)!}{(r-1)!} \beta^{r - 1} q\text{-}\mathrm{dres} (w,[\beta]_q, r) = q\text{-}\mathrm{dres} (\mathcal{L}_2(w),[\beta]_q, m_2+r)
\end{gather*} 
are equal. Since $\beta \neq 0$, the above equality is equivalent to 
\begin{equation} \label{wrat-6}
\frac{c_{1,m_2+r-1}}{c_{2,m_2}} (-1)^{r - 1} (r - 1)! \beta^{r - 1} q\text{-}\mathrm{dres} \!\bigl(\tfrac{\delta(u)}{u},[\beta]_q,1\bigr) = q\text{-}\mathrm{dres} (w,[\beta]_q, r)
\end{equation}
Set $c_{r - 1} = \frac{c_{1,m_2+r-1}}{c_{2,m_2}}$. Then~\eqref{wrat-6} is equivalent to 
\[
q\text{-}\mathrm{dres} \left(c_{r-1}\delta^{r-1}\left(\frac{\delta(u)}{u}\right)-w, [\beta]_q, r\right) = 0
\] for every $q^\mathbb{Z}$-orbit $[\beta]_q$ with $\beta\in\bar{\mathbb{Q}}^\times$ simultaneously.

We continue by taking the next highest $r'\leq r-1$ such that $q\text{-}\mathrm{dres} (w,[\beta]_q,r')\neq 0$ for some $[\beta]_q$, and proceed as above to find the coefficient $c_{r'-1}\in \bar{\mathbb{Q}}$ of $\mathcal{L}$ such that 
\[
q\text{-}\mathrm{dres} \left(c_{r-1}\delta^{r-1}\left(\frac{\delta(u)}{u}\right) + c_{r'-1} \delta^{r'- 1} \left(\frac{\delta(u)}{u}\right) - w, [\beta]_q, r'\right) = 0.
\]
Eventually we will have constructed a linear $\delta$-polynomial $\mathcal{L}\in \bar{\mathbb{Q}}\{Y\}_\delta$ such that $$q\text{-}\mathrm{dres} \left(\mathcal{L}\left(\frac{\delta(u)}{u}\right)-w,[\beta]_q, j\right)=0$$ for every $q^\mathbb{Z}$-orbit $[\beta]_q$ with $\beta\in\bar{\mathbb{Q}}^\times$ and every $j\in\mathbb{N}$. 
Set $c = q\text{-}\mathrm{dres} \left(\mathcal{L}\left(\frac{\delta(u)}{u}\right)-w, \infty\right)\in\bar{\mathbb{Q}}$. Then it follows from Proposition~\ref{indsum} that $\mathcal{L}(\frac{\delta(u)}{u})-w-c=\sigma(f)-f$ for some $f\in k$. By \cite[Lem.~2.4]{hardouin:2008} (cf.~Remark~\ref{c-ok}), we may take $f\in \bar{\mathbb{Q}}(x)$, so we are indeed in case (iii), as we wanted to show.
\end{proof}

The main ideas for the proof of the following result were communicated to the first author in \cite{singer-letter}, during the development of the algorithm in \cite{arreche:2017}.

\begin{prop}\label{wnotrat}Suppose there exists exactly one solution $u\in\bar{\mathbb{Q}}(x)$ to \eqref{ric1} and $H$ is not commutative. Then $R_u(G)=R_u(H)=\mathbb{G}_a(C)$.
\end{prop}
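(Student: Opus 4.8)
The plan is to study $R_u(G)$ as a \emph{normal} differential algebraic subgroup of $R_u(H)=\mathbb{G}_a(C)$, the latter equality being \cite[Thm.~13(2)]{hendriks:1997} under the hypothesis of a unique solution to \eqref{ric1}. By Proposition~\ref{classification} we may write $R_u(G)=\{\xi\in\mathbb{G}_a(C)\mid\mathcal{L}(\xi)=0\}$ for a linear $\delta$-polynomial $\mathcal{L}\in C\{Y\}_\delta$, and the goal is to prove $\mathcal{L}=0$. First I would record the conjugation action of $G$ on $R_u(G)$: from the embedding \eqref{h-emb}, conjugating $\left(\begin{smallmatrix}1&\xi\\0&1\end{smallmatrix}\right)$ by $M_\gamma=\left(\begin{smallmatrix}\alpha_\gamma&\xi_\gamma\\0&\lambda_\gamma\end{smallmatrix}\right)$ produces $\left(\begin{smallmatrix}1&(\alpha_\gamma/\lambda_\gamma)\xi\\0&1\end{smallmatrix}\right)$, so $R_u(G)$ is stable under multiplication by $\alpha_\gamma/\lambda_\gamma$ for every $\gamma\in G$; since conjugation is invertible this stability is an equality $(\alpha_\gamma/\lambda_\gamma)\cdot R_u(G)=R_u(G)$.

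Next I would split on $\mathrm{ord}(\mathcal{L})$. If $\mathrm{ord}(\mathcal{L})=0$ then $R_u(G)$ is trivial; but every commutator of elements of $G$ is unipotent (the diagonal parts cancel in \eqref{h-emb}), hence lies in $G\cap\mathbb{G}_a(C)=R_u(G)=\{1\}$, so $G$ is abelian. As $G$ is Zariski-dense in $H$ by Proposition~\ref{dense}, $H$ would be abelian, contradicting the hypothesis. If $\mathrm{ord}(\mathcal{L})=d\geq1$, write $[\theta]$ for multiplication by $\theta:=\alpha_\gamma/\lambda_\gamma$; the stability $\theta\cdot\ker\mathcal{L}=\ker\mathcal{L}$ forces $\mathcal{L}\circ[\theta]=\theta\,\mathcal{L}$ (equal order, equal kernel, equal leading coefficient), i.e. $[\mathcal{L},[\theta]]=0$. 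Comparing the coefficient of $\delta^{d-1}$ in the commutator gives $c_d\,d\,\delta(\theta)=0$, whence $\delta(\theta)=0$. Thus every $\alpha_\gamma/\lambda_\gamma\in C^\delta$, which by \eqref{zw-h} and Theorem~\ref{correspondence} is equivalent to $\delta(w)/w\in k$.

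It remains to rule out this last situation, which is the crux. Set $\omega:=\delta(w)/w\in k$ and $\rho:=\sigma(w)/w=b/\bigl(u\sigma(u)\bigr)\in k$; then $\delta^i(w)=P_iw$ with $P_i\in k$. Since $w\in L:=K^{R_u(G)}$ and $L^\sigma=k^\sigma=C$, Proposition~\ref{diffsum} (applied over $L$ to $z$, which satisfies $\sigma(z)-z=w$) shows $R_u(G)\subsetneq\mathbb{G}_a(C)$ is equivalent to the existence of $0\neq\mathcal{N}=\sum_i c_i\delta^i\in C\{Y\}_\delta$ and $f\in L$ with $\mathcal{N}(w)=\sigma(f)-f$. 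Decomposing $f$ into isotypic components for the diagonalizable group $\mathrm{Gal}_{\sigma\delta}(L/k)$ and using that $\gamma(f)=\tfrac{\lambda_\gamma}{\alpha_\gamma}f+c_\gamma$ with $c_\gamma\in C$, one finds $f=p_0w$ up to an additive element of $C$, with $p_0\in k$. Because $\mathcal{N}(w)=\bigl(\sum_i c_iP_i\bigr)w$, the relation collapses to the purely rational identity
\[
\textstyle\sum_i c_i P_i=\sigma(p_0)\rho-p_0,\qquad p_0\in k,\ (c_i)\neq 0 .
\]
The plan is to show, using the $q$-discrete residue calculus of Section~\ref{order1-sec} (Lemma~\ref{qres-computation} and Proposition~\ref{indsum}), that no such identity can hold. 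The existence of $\omega\in k$ already forces the total order of $\rho$ along every $q^\mathbb{Z}$-orbit to vanish; a finer residue analysis of the displayed identity should then force $\rho$ to be a genuine $\sigma$-coboundary $\sigma(g)/g$ with $g\in k^\times$ — equivalently $w\in k$, contradicting the non-commutativity of $H$ — or else produce a second rational solution of \eqref{ric1}, contradicting the standing uniqueness hypothesis.

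The main obstacle is exactly this last $q$-discrete-residue bookkeeping in the regime $\delta(w)/w\in k$: here all the coarse orbit-order invariants that separate commutative from non-commutative $H$ (and that drive the simpler computations of Proposition~\ref{galdiag} and Proposition~\ref{wrat}) are degenerate, so the contradiction must be extracted from the finer residue data at $0$ and $\infty$ and from the interaction between the multiplicative twist $\rho$ and the additive datum $\omega$. I expect the non-commutativity to enter precisely as the statement that $\rho$ is \emph{not} a $\sigma$-coboundary, and the sharpened summability criteria of Corollary~\ref{diffprod} and Proposition~\ref{indsum}, rather than the softer structural arguments used in the reducible commutative case, to be what ultimately closes the argument.
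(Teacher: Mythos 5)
Your opening reduction is sound and matches the paper's own first step: Cassidy's classification (Proposition~\ref{classification}) writes $R_u(G)=\ker\mathcal{L}$, normality of $R_u(G)$ in $G$ gives stability of that kernel under multiplication by $\alpha_\gamma/\lambda_\gamma$, the order-$0$ case forces $G$ (hence $H$, by Zariski density from Proposition~\ref{dense}) to be commutative, and for $\mathrm{ord}(\mathcal{L})\geq 1$ your coefficient comparison correctly recovers $\delta(\alpha_\gamma/\lambda_\gamma)=0$ — the paper cites \cite[Lem.~3.7]{hardouin-singer:2008} for exactly this. Your translation of that conclusion into $\delta(w)/w\in k$, and the isotypic-component argument producing $f=p_0w$ plus a constant, are also essentially the paper's coboundary step in different clothing: the paper phrases it as vanishing of $H^1\bigl(\bar G, C\cdot\tilde w^{-1}\bigr)$ via Sah's lemma, and noncommutativity is used there exactly where you need the character $\gamma\mapsto\lambda_\gamma/\alpha_\gamma$ to be nontrivial.

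However, the proof is not complete: the crux — ruling out a proper $R_u(G)$ in the regime $\delta(w)/w\in k$ but $w\notin k$ — is left as a plan, and the plan as stated would not go through. Your terminal identity $\sum_i c_iP_i=\sigma(p_0)\rho-p_0$ is a $\rho$-\emph{twisted} telescoping equation, whereas the entire residue calculus of \S\ref{order1-sec} (Proposition~\ref{indsum}, Lemma~\ref{qres-computation}) applies only to untwisted equations $\sigma(f)-f=g$; neither the paper nor your sketch supplies machinery for extracting obstructions from a twisted equation with arbitrary twist $\rho\in k$, so the ``finer residue analysis'' you defer to has nothing behind it. The paper closes this gap with inputs absent from your proposal: since $\{\lambda_\gamma\alpha_\gamma^{-1} \mid \gamma\in G\}$ is $\delta$-constant, Proposition~\ref{dconst} makes the first-order system $\sigma(W)=\bigl(b/(u\sigma(u))\bigr)W$ integrable, and \cite[Prop.~3.6]{arreche-singer:2016} together with \cite[Thm.~2]{schaefke-singer:2019} normalize $w=t\tilde w$ with $t\in\bar{\mathbb{Q}}(x)^\times$, $\sigma(\tilde w)=c\tilde w$, $\delta(\tilde w)=d\tilde w$, and — crucially — $c\notin q^{\mathbb{Z}}$ (otherwise $w\in k$ and $H$ would be commutative). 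This replaces your arbitrary $\rho$ by the constant twist $c$, after which \cite[Prop.~6.4(2)]{hardouin-singer:2008}, the twisted summability criterion for $c\notin q^{\mathbb{Z}}$, upgrades the telescoped relation to $c^{-1}t=\sigma(h)-c^{-1}h$; then $\sigma(z-h\tilde w)=z-h\tilde w$, so $z\in k[w]$ is fixed by $R_u(H)$, contradicting $R_u(H)=\mathbb{G}_a(C)$. Note also that your expected endgame is misdirected: the contradiction is obtained against $R_u(H)=\mathbb{G}_a(C)$ directly, not by producing a second rational solution of \eqref{ric1} or by residue degeneration, and noncommutativity enters twice (to get $c\notin q^{\mathbb{Z}}$ and in the coboundary step) but never through $q$-discrete residues, which play no role in the paper's proof of this proposition.
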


\begin{proof}  We recall the notation introduced at the beginning of this section: $u\in\bar{\mathbb{Q}}(x)$ is the unique solution in $k$ to the Riccati equation~\eqref{ric1}, $v=\frac{b}{u}$, $\{y_1,y_2\}$ is a $C$-basis of solutions for \eqref{difeq} such that $\sigma(y_1)=uy_1$ and $\sigma(y_2)-uy_2=y_0$, where $y_0\neq 0$ satisfies $\sigma(y_0)=vy_0$. The embedding $G\hookrightarrow \mathrm{GL}_2(C):\gamma\mapsto M_\gamma$ is as in \eqref{h-emb}, and the action of $G$ on the solutions is given in \eqref{h-action}. The auxiliary elements $w$ and $z$ are defined as in \eqref{zw-def}; they are acted upon by $\sigma$ as in \eqref{zw-sigma} and by $G$ as in \eqref{zw-h}.

By Proposition~\ref{classification}, either $R_u(G)=\mathbb{G}_a(C)$, or else \begin{equation}\label{radop}R_u(G)=\left\{\begin{pmatrix} 1 & \xi \\ 0 & 1\end{pmatrix} \ \middle| \ \beta\in C, \ \mathcal{L}(\xi)=0\right\}.\end{equation} for some nonzero, monic linear $\delta$-polynomial $\mathcal{L}\in C\{Y\}_\delta$. Since $R_u(G)$ is normal in $G$, this implies that (cf. \cite[Lem.~3.6]{hardouin-singer:2008})$$M_\gamma\begin{pmatrix} 1 & \beta \\ 0 &1\end{pmatrix}M_\gamma^{-1}=\begin{pmatrix}\alpha_\gamma & \xi_\gamma \\ 0 & \lambda_\gamma\end{pmatrix}\begin{pmatrix} 1 & \xi \\ 0 &1\end{pmatrix}\begin{pmatrix}\alpha_\gamma^{-1} & -\alpha_\gamma^{-1}\lambda_\gamma^{-1}\xi_\gamma \\ 0 & \lambda_\gamma^{-1}\end{pmatrix}=\begin{pmatrix} 1 & \alpha_\gamma\lambda_\gamma^{-1}\xi \\ 0 & 1\end{pmatrix}\in R_u(G)$$ for each $\gamma\in G$ and $\left(\begin{smallmatrix} 1 & \xi \\ 0 & 1\end{smallmatrix}\right)\in R_u(G)$. If $\mathcal{L}$ is as in \eqref{radop}, then $\mathcal{L}(\xi)=0\Rightarrow\mathcal{L}(\alpha_\gamma\lambda_\gamma^{-1}\xi)=0$. By \cite[Lem.~3.7]{hardouin-singer:2008}, this implies that if $\mathrm{ord}(\mathcal{L})\neq 0$, then $\delta(\alpha_\gamma\lambda_\gamma^{-1})=0$ for every $\gamma\in G$. But since $\mathcal{L}\neq 0$, $\mathrm{ord}(\mathcal{L})=0$ if and only if $R_u(G)=\{0\}$, which is impossible, for then we would have that \[G\simeq G/R_u(G)\simeq\left\{\begin{pmatrix}\alpha_\gamma & 0 \\ 0 & \lambda_\gamma\end{pmatrix} \ \middle| \ \gamma\in G\right\}\] is commutative, and since $G$ is Zariski-dense in $H$ by Proposition~\ref{dense}, this would force $H$ to be commutative also, contradicting our hypotheses.

We proceed by contradiction: assuming $R_u(G)\neq\mathbb{G}_a(C)$, we will show that $R_u(H)=\{0\}$, contradicting our hypotheses. We have shown above that if $R_u(G)\neq\{0\}$ then there exists a monic linear $\delta$-polynomial $\mathcal{L}\in C\{Y\}_\delta$ with $\mathrm{ord}(\mathcal{L})\geq 1$ such that $R_u(G)$ is as in \eqref{radop} and $\delta\left(\frac{\alpha_\gamma}{\lambda_\gamma}\right)=0$ for every $\gamma\in G$. It follows from \eqref{zw-h} and Theorem~\ref{correspondence} that the group $\{\lambda_\gamma\alpha_\gamma^{-1} \ | \ \gamma\in G\}\subseteq\mathbb{G}_m(\bar{\mathbb{Q}})$ is the $\sigma\delta$-Galois group for the system \[\sigma(W)=\left(\frac{b}{u\sigma(u)}\right)W,\] which by Proposition~\ref{dconst} must be integrable over $k$ in the sense of \cite[Def.~3.3]{arreche-singer:2016}. It is shown in \cite[Prop.~3.6]{arreche-singer:2016} that this system must then be integrable over $\bar{\mathbb{Q}}(x)$, and therefore by \cite[Thm.~2]{schaefke-singer:2019}, there exist $t\in\bar{\mathbb{Q}}(x)^\times$ and $c,d\in\bar{\mathbb{Q}}$ with $c\neq 0$ such that $\tilde{w}:=t^{-1}w$ satisfies \begin{align*}\sigma(\tilde{w}) &=c\tilde{w};\\ \delta(\tilde{w})&=d\tilde{w}.\end{align*} It is convenient to point out now that $c\neq q^r$ for any $r\in \mathbb{Z}$, because otherwise we would have $\tilde{w}=ex^r$ for some $e\in C$, which would imply that $w\in k$, contradicting our hypothesis that $H$ is not commutative (cf.~the proof of Proposition~\ref{wrat}: $H$ is commutative if and only if $\alpha_\gamma=\lambda_\gamma$ for every $\gamma\in H$ if and only if $w\in k$ by \eqref{zw-h} and Theorem~\ref{correspondence}). We will need to use the fact that $c\notin q^\mathbb{Z}$ at the end of the proof.

We claim that \begin{equation}\label{w-operator}\tilde{w}^{-1}\mathcal{L}(w)=:f_\mathcal{L}\in k,\qquad\text{and moreover}\qquad f_\mathcal{L}=\tilde{\mathcal{L}}(t)\end{equation} for some linear differential polynomial $0\neq\tilde{\mathcal{L}}\in C\{Y\}_\delta$. In fact, this is true for any non-zero linear differential polynomial in $C\{Y\}_\delta$, not just for the specific $\mathcal{L}\in C\{Y\}_\delta$ in \eqref{radop}. It suffices to show that $\tilde{w}^{-1}\delta^n(w)$ belongs to the $C$-linear span $\mathcal{D}$ of $\{\delta^j(t) \ | \ j\in\mathbb{Z}_{\geq 0}\}$ for every $n\in\mathbb{N}$. We prove this by induction: the case $n=0$ is clear, since $\tilde{w}^{-1}w=t\in \mathcal{D}$. Assuming that $\tilde{w}^{-1}\delta^n(w)=f_n\in\mathcal{D}$, we see that \[\tilde{w}^{-1}\delta^{n+1}(w)=\tilde{w}^{-1}\delta(\delta^n(w))=\tilde{w}^{-1}\delta(f_n\tilde{w})=\delta(f_n)+df_n\in \mathcal{D}\] as well. Moreover, this computation also shows that $\mathcal{L},\tilde{\mathcal{L}}\in C\{Y\}_\delta$ have the same order and the same leading coefficient.

By \eqref{zw-sigma} and \eqref{zw-h}, the element $\mathcal{L}(z)\in R$ satisfies \[\sigma(\mathcal{L}(z))-\mathcal{L}(z)=\mathcal{L}(w),\qquad \text{and}\qquad \gamma(\mathcal{L}(z))=\frac{\lambda_\gamma}{\alpha_\gamma}\mathcal{L}(z)+\mathcal{L}\left(\frac{\xi_\gamma}{\alpha_\gamma}\right)\] for every $\gamma\in G$, since $\delta(\lambda_\gamma\alpha_\gamma^{-1})=0$ for $\gamma\in G$. Hence $\gamma(\mathcal{L}(z))=\mathcal{L}(z)$ for every $\gamma\in R_u(G)$, and therefore by Theorem~\ref{correspondence} we have that $\mathcal{L}(z)\in k\langle y_0,y_1\rangle_\delta=:F$, the total ring of fractions of the $\sigma\delta$-PV ring $k\{y_0,y_1,(y_0y_1)^{-1}\}_\delta$ for \eqref{redeq}; we emphasize that the latter ring is not necessarily a domain, so $F$ is not necessarily a field.

For $\gamma\in G/R_u(G)\simeq\mathrm{Gal}_{\sigma\delta}(F/k)=:\bar{G}$ given by $\left(\begin{smallmatrix}\alpha_\gamma & 0 \\ 0 & \lambda_\gamma\end{smallmatrix}\right)\in\mathbb{G}_m(C)^2$, let \begin{equation}\label{cocycle-def} \tau_\gamma:=\gamma(\tilde{w}^{-1}\mathcal{L}(z))-\tilde{w}^{-1}\mathcal{L}(z),\end{equation} where we note that since $\mathcal{L}(z)\in F$ is fixed by $R_u(G)$, the action of the reductive quotient $\bar{G}$ on $\mathcal{L}(z)$ is well-defined. We claim that $\{\tau_\gamma \ | \ \gamma\in\bar{G}\}$ is a $1$-cocycle of $\bar{G}$ with values in the $\bar{G}$-module $M:=C\cdot \tilde{w}^{-1}$ (see~\cite[VI.10]{lang:2002}). Since $M$ is the solution space for $\sigma(W)=c^{-1}W$ in $F$, it is clear that $M$ is stabilized by $\bar{G}$. Moreover, it follows from \eqref{w-operator} that \[\sigma(\tau_\gamma)=((\gamma-1)\circ\sigma)(\tilde{w}^{-1}\mathcal{L}(z))=(\gamma-1)\bigl(c^{-1}\tilde{w}^{-1}\mathcal{L}(z)+c^{-1}\tilde{w}^{-1}\mathcal{L}(w)\bigr)=c^{-1}\tau_\gamma,\] since $c^{-1}\tilde{w}^{-1}\mathcal{L}(w)=c^{-1}f_\mathcal{L}\in k$ and therefore $\gamma(c^{-1}f_\mathcal{L})=c^{-1}f_\mathcal{L}$ for every $\gamma\in\bar{G}$. Hence $\tau_\gamma\in M$ for each $\gamma\in\bar{G}$. To verify the cocycle condition, note that for $\gamma,\theta\in\bar{G}$ we have that \begin{align*}\tau_{\gamma\theta}=\gamma\theta(\tilde{w}^{-1}\mathcal{L}(z))-\tilde{w}^{-1}\mathcal{L}(z) &=\gamma\bigl(\theta(\tilde{w}^{-1}\mathcal{L}(z))-\tilde{w}^{-1}\mathcal{L}(z)\bigr)+\bigl(\gamma(\tilde{w}^{-1}\mathcal{L}(z))-\tilde{w}^{-1}\mathcal{L}(z)\bigr)\\ &=\gamma(\tau_\theta)+\tau_\gamma.\end{align*}

Since $G$ is not commutative (for otherwise $H$ would be commutative, as discussed above and contrary to our hypotheses), there exists $\gamma\in\bar{G}$ such that $\alpha_\gamma\neq\lambda_\gamma$, and therefore $m\mapsto \gamma(m)-m$ is a $\bar{G}$-automorphism of $M$ for such a $\gamma\in\bar{G}$, since $\bar{G}$ is commutative. By Sah's Lemma~\cite[Lem.~VI.10.2]{lang:2002}, the cohomology group $H^1(\bar{G},M)=\{0\}$, and in particular $\{\tau_\gamma \ | \ \gamma\in\bar{G}\}$ is a $1$-coboundary, i.e., there exists $e\tilde{w}^{-1}\in C\cdot\tilde{w}^{-1}=M$ such that $\tau_\gamma=\gamma(e\tilde{w}^{-1})-e\tilde{w}^{-1}$. It follows from the definition of $\tau_\gamma$ in \eqref{cocycle-def} that \[\gamma(\tilde{w}^{-1}\mathcal{L}(z)-e\tilde{w}^{-1})=\tilde{w}^{-1}\mathcal{L}(z)-e\tilde{w}^{-1}\] for every $\gamma\in\bar{G}$, which implies that $\tilde{w}^{-1}\mathcal{L}(z)-e\tilde{w}^{-1}=:g\in k$ by Theorem~\ref{correspondence}. Hence\[f_\mathcal{L}\tilde{w}=\mathcal{L}(w)=\sigma(\mathcal{L}(z))-\mathcal{L}(z)=\sigma(g\tilde{w})-g\tilde{w}=(c\sigma(g)-g)\tilde{w},\] and therefore, since $c\in\bar{\mathbb{Q}}^\times$, \[\tilde{\mathcal{L}}(c^{-1}t)=c^{-1}f_\mathcal{L}=\sigma(g)-c^{-1}g,\] where $0\neq\tilde{\mathcal{L}}\in C\{Y\}_\delta$ is the linear differential polynomial defined implicitly in \eqref{w-operator}. Since $c\notin q^{\mathbb{Z}}$, it follows from \cite[Prop.~6.4(2)]{hardouin-singer:2008} that there exists $h\in k$ such that \[c^{-1}t=\sigma(h)-c^{-1}h.\] But then $h\tilde{w}$ satisfies \[\sigma(h\tilde{w})-h\tilde{w}=(c\sigma(h)-h)\tilde{w}=t\tilde{w}=w,\] and therefore $\sigma(z-h\tilde{w})-(z-h\tilde{w})=0$ by \eqref{zw-sigma}, which implies that $z-h\tilde{w}\in C$ and therefore $z\in k[w]$ is fixed by $R_u(H)$. But $\gamma(z)=z+\xi_\gamma$ for every $\gamma\in R_u(H)$, and therefore $R_u(H)=\{0\}$, which contradicts our hypotheses and concludes the proof.
\end{proof}

\begin{rem}\label{reducible-rem} To compute the difference-differential Galois group $G$ for \eqref{difeq} when there exists exactly one solution $u\in\bar{\mathbb{Q}}(x)$ to \eqref{ric1}, we apply Propositions~\ref{galdiag}, \ref{wrat}, and~\ref{wnotrat} as follows. First, compute the defining equations for the reductive quotient \[\bar{G}:=G/R_u(G)=\left\{\begin{pmatrix}\alpha_\gamma & 0 \\ 0 & \lambda_\gamma\end{pmatrix} \ \middle| \ \gamma\in G\right\},\] which is the $\sigma\delta$-Galois group for the system \eqref{redeq}, as in Proposition~\ref{galdiag} and Remark~\ref{diagonalizable-rem}, with $u_1=u$ and $u_2=v$. In particular, this requires computing the $q$-discrete residues $q$-$\mathrm{dres}\left(\frac{\delta(u)}{u},[\beta]_q,1\right)$ for each $q^\mathbb{Z}$-orbit $[\beta]_q$ with $\beta\in\bar{\mathbb{Q}}^\times$. Note that this will produce all the defining equations for $G$ relating $\alpha$ and $\lambda$ only, and it remains to compute the remaining defining equations for $G$, if there are any.

If $uv^{-1}\neq\frac{\sigma(w)}{w}$ for any $w\in\bar{\mathbb{Q}}(x)$ as in Proposition~\ref{galdiag}(i), then $R_u(G)=\mathbb{G}_a(C)$ by Proposition~\ref{wnotrat}, and therefore there are no more defining equations for $G$. Otherwise, compute such a $w\in\bar{\mathbb{Q}}(x)$, as well as its $q$-discrete residues $q$-$\mathrm{dres}(w,[\beta]_q,j)$ for every $q^\mathbb{Z}$-orbit $[\beta]_q$ and $j\in\mathbb{N}$ (only finitely many of these are non-zero). In this case, \begin{equation}\label{reducible-grels}G\subseteq\bar{G}\times\mathbb{G}_a(C)=\left\{\begin{pmatrix}\alpha & \xi \\ 0 & \alpha\end{pmatrix} \ \middle| \ \begin{pmatrix}\alpha & 0 \\ 0 & \alpha\end{pmatrix}\in\bar{G}, \ \xi\in C \right\},\end{equation} and this containment is proper if and only if there exist $f\in\bar{\mathbb{Q}}(x)$, a linear differential polynomial $\mathcal{L}\in\bar{\mathbb{Q}}\{Y\}_\delta$, and $c\in\bar{\mathbb{Q}}$ as in Proposition~\ref{wrat}(iii).

Let us first compute the defining equations of $G$ in \eqref{reducible-grels} when $q$-$\mathrm{dres}(w,[\beta]_q,j)=0$ for every $q^\mathbb{Z}$-orbit $[\beta]_q$ and $j\in\mathbb{N}$, in which case $q$-$\mathrm{dres}(w,\infty)=:c\neq 0$ and $-w=\sigma(f)-f-c$ for some $f\in\bar{\mathbb{Q}}(x)$ by Proposition~\ref{indsum}, as in Proposition~\ref{wrat}(iii). In this case, $G$ is contained in the subgroup of \eqref{reducible-grels} defined by $\delta\left(\frac{\xi}{\alpha}\right)=0$, and $R_u(G)\subseteq\mathbb{G}_a(C^\delta)$. If $\frac{\delta(u)}{u}=\sigma(\tilde{f})-\tilde{f}$ for some $\tilde{f}\in\bar{\mathbb{Q}}(x)$ as in Proposition~\ref{galdiag}(ii), so that $\delta(\alpha_\gamma)=0$ for every $\gamma\in G$, then $G$ is the subgroup of \eqref{reducible-grels} defined by $\delta(\xi)=0$, and $R_u(G)=\mathbb{G}_a(C^\delta)$. If there exist $\tilde{f}\in\bar{\mathbb{Q}}(x)$ and $0\neq \tilde{c}\in\mathbb{Z}$ as in Proposition~\ref{galdiag}(ii), so that $\delta\left(\frac{\delta(\alpha_\gamma)}{\alpha_\gamma}\right)=0$ for every $\gamma\in G$ but there exists $\gamma\in G$ such that $\delta(\alpha_\gamma)\neq 0$, then $G$ is the subgroup of \eqref{reducible-grels} defined by $\tilde{c}\xi=c\delta(\alpha)$, and $R_u(G)=\{0\}$. If there are no $\tilde{f}\in\bar{\mathbb{Q}}(x)$ and $\tilde{c}\in\mathbb{Z}$ such that $\frac{\delta(u)}{u}=\sigma(\tilde{f})-\tilde{f}+\tilde{c}$, then $G$ is precisely the subgroup of \eqref{reducible-grels} defined by $\delta\left(\frac{\xi}{\alpha}\right)=0$, and $R_u(G)=\mathbb{G}_a(C^\delta)$.

Assuming now that some $q$-discrete residue $q$-$\mathrm{dres}(w,[\beta]_q,j)\neq 0$, let $r\in \mathbb{N}$ be as large as possible such that $q$-$\mathrm{dres}(w,[\beta]_q,r)\neq 0$ for some $q^\mathbb{Z}$-orbit $[\beta]_q$. Write the linear differential polynomial \[\mathcal{L}=\sum_{i=0}^{r-1}c_i\delta^i(Y)\in\bar{\mathbb{Q}}\{Y\}_\delta\] with undetermined coefficients, and decide whether the system of linear equations over $\bar{\mathbb{Q}}$ defined by setting \begin{equation}\label{reducible-op}q\text{-}\mathrm{dres}\left(\mathcal{L}\left(\frac{\delta(u)}{u}\right)-w,[\beta]_q,j\right)=0\end{equation} for every $q^\mathbb{Z}$-orbit $[\beta]_q$ and $1\leq j\leq r$ admits a solution. If there is no solution, then again we have that $R_u(G)=\mathbb{G}_a(C)$ and $G$ is precisely the group in \eqref{reducible-grels}. If there is a solution, then it it is unique and $c_{r-1}\neq 0$. In this case, setting \begin{equation}\label{reducible-rem-c}c:=c_0\cdot q\text{-}\mathrm{dres}\left(\frac{\delta(u)}{u},\infty\right)- q\text{-}\mathrm{dres}\left(w,\infty\right),\end{equation} there exists $f\in\bar{\mathbb{Q}}(x)$ as in Proposition~\ref{wrat}(iii) by Proposition~\ref{indsum}, and $G$ is the subgroup of \eqref{reducible-grels} defined by the corresponding relation stipulated in Proposition~\ref{wrat}, depending on whether the $c\in\bar{\mathbb{Q}}$ defined in \eqref{reducible-rem-c} is zero or not. If $c=0$ then $R_u(G)=0$, and if $c\neq 0$ then $R_u(G)=\mathbb{G}_a(C^\delta)$.
\end{rem}

Since $R_u(G)=\{0\}$ whenever there is not exactly one solution $u\in\bar{\mathbb{Q}}(x)$ to \eqref{ric1} (i.e., either there is no solution or there is more than one solution to \eqref{ric1} in $\bar{\mathbb{Q}}(x)$), we deduce the following result from Remark~\ref{reducible-rem}, which generalizes \cite[Prop.~4.3(2)]{hardouin-singer:2008}.

\begin{cor} \label{unipotent-possibilities} If $G$ is the $\sigma\delta$-Galois group of \eqref{difeq}, then the unipotent radical $R_u(G)$ is either $\{0\}$, $\mathbb{G}_a(C^\delta)$, or $\mathbb{G}_a(C)$.
\end{cor}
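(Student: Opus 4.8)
The plan is to reduce the statement to the one case in which the unipotent radical can fail to be trivial, namely the reducible non-diagonalizable case treated in \S\ref{reducible-sec}, where Remark~\ref{reducible-rem} has already carried out the complete computation. Throughout, $R_u(G)$ is understood as the unipotent radical of $G$ in the differential-algebraic sense; in the reducible case this coincides with the concrete group defined in \eqref{uniprad-def}. The overall strategy is a case analysis driven by Proposition~\ref{hendriks-possibilities}, combined with the fact that $G$ is Zariski-dense in $H$ by Proposition~\ref{dense}.

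First I would invoke Proposition~\ref{hendriks-possibilities} to split into its six mutually exclusive possibilities for $H$, and observe that exactly one of them, item~(3), corresponds to there being exactly one solution $u\in\bar{\mathbb{Q}}(x)$ to the Riccati equation \eqref{ric1}. In items~(1) and~(2) the group $H$ is diagonalizable, hence a subtorus of $\mathbb{G}_m(C)^2$; in items~(4)--(6) the group $H$ is irreducible (either imprimitive, or containing $\mathrm{SL}_2(\bar{\mathbb{Q}})$). In all five of these cases $H$ is reductive, i.e.\ it has no nontrivial connected normal unipotent algebraic subgroup.

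Next I would argue that reductivity of $H$ forces $R_u(G)=\{0\}$ in those five cases. Since $R_u(G)$ is a unipotent subgroup of $\mathrm{GL}_2(C)$, it is conjugate to a differential algebraic subgroup of $\mathbb{G}_a(C)$ and is therefore connected by Proposition~\ref{classification}; hence its Zariski closure $\overline{R_u(G)}$ is a connected unipotent algebraic subgroup of $H$. Because $R_u(G)$ is normalized by $G$ and the normalizer of $\overline{R_u(G)}$ in $\mathrm{GL}_2(C)$ is Zariski-closed, this normalizer contains the Zariski-dense subgroup $G$ and hence all of $H$; thus $\overline{R_u(G)}$ is a connected normal unipotent subgroup of $H$. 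Reductivity of $H$ then yields $\overline{R_u(G)}=\{1\}$, whence $R_u(G)=\{0\}$.

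Finally, in the remaining case (item~(3)) the group $H$ is reducible but not diagonalizable, which is precisely the setting of \S\ref{reducible-sec}. Here Remark~\ref{reducible-rem}, which rests on Propositions~\ref{galdiag}, \ref{wrat}, and~\ref{wnotrat}, exhibits an exhaustive list of possibilities for $R_u(G)$, and each of them is one of $\{0\}$, $\mathbb{G}_a(C^\delta)$, or $\mathbb{G}_a(C)$; invoking that computation completes the proof. Since the heavy lifting is done in Remark~\ref{reducible-rem}, the only genuinely new step is the reductive-case argument of the previous paragraph, and the point requiring care there is the passage to Zariski closures: one must use that $R_u(G)$ is connected (so that $\overline{R_u(G)}$ is a connected unipotent subgroup) in order to deduce its triviality from reductivity of $H$, rather than merely observing that $R_u(G)$ consists of unipotent elements—which alone would not suffice, as reductive groups such as $\mathrm{SL}_2$ contain many nontrivial unipotent elements.
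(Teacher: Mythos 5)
Your proof is correct and follows essentially the same route as the paper: reduce via Proposition~\ref{hendriks-possibilities} to the reducible non-diagonalizable case of \S\ref{reducible-sec} and invoke the exhaustive computation of Remark~\ref{reducible-rem} (resting on Propositions~\ref{wrat} and~\ref{wnotrat}), all other cases yielding $R_u(G)=\{0\}$. The only difference is that you carefully justify, via the Zariski-closure, connectedness, and normality argument, the step the paper merely asserts in the sentence preceding the corollary—namely that $R_u(G)=\{0\}$ whenever there is not exactly one solution to \eqref{ric1} in $\bar{\mathbb{Q}}(x)$, since then $H$ is reductive—which is a legitimate and welcome filling-in of detail rather than a different approach.
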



\section{Irreducible and imprimitive groups}\label{dihedral-sec}

In this section we will denote $k_1=C(x)$, where $C$ is a $\delta$-closure of $\bar{\mathbb{Q}}$, $\sigma$ denotes the $C$-linear automorphism of $k$ defined by $\sigma(x)=qx$, and $\delta(x)=1$. It will be convenient to use similar notation as that of Section~\ref{hendriks-sec}: fix once and for all $q_2\in\bar{\mathbb{Q}}$ such that $q_2^2=q$, and let $k_2:=C(x_2)$ be the $\sigma\delta$-field extension of $k_1$ defined by setting $x_2^2=x$, $\sigma(x_2)=q_2x_2$, and $\delta(x_2)=\frac{1}{2}x_2$.

Let us now suppose that there are no solutions in $\bar{\mathbb{Q}}(x)$ to the first Riccati equation \eqref{ric1}. According to Proposition~\ref{hendriks-possibilities}, under these conditions the $\sigma$-Galois group $H$ for \eqref{difeq} over $k$ should be irreducible, and $H$ should be imprimitive if and only if one of the following possibilities holds:
\begin{enumerate}
\item there exist two solutions $u_1,u_2\in \bar{\mathbb{Q}}(x_2)\backslash \bar{\mathbb{Q}}(x)$ to the first Riccati equation \eqref{ric1} such that $u_2=\bar{u}_1$ is the Galois conjugate of $u_1$ over $\bar{\mathbb{Q}}(x)$; or 
\item either $a=0$ or else there exists a solution $e\in\bar{\mathbb{Q}}(x)$ to the second Riccati equation \eqref{ric2}; or
\item $a\neq 0$ and there exist two solutions $e_1,e_2\in \bar{\mathbb{Q}}(x_2)\backslash\bar{\mathbb{Q}}(x)$ to the second Riccati equation \eqref{ric2} such that $e_2=\bar{e}_1$ is the Galois conjugate of $e_1$ over $\bar{\mathbb{Q}}(x)$.
\end{enumerate}
\noindent Note that (2) and (3) above are mutually exclusive and together exhaust the possibility that the more compact Proposition~\ref{hendriks-possibilities}(5) holds. We will address each of the possibilites (1), (2), and (3) above in turn, in Sections~\ref{imprimitive-sec:1}, \ref{imprimitive-sec:2}, and \ref{imprimitive-sec:3}, respectively, and establish in each case that $H$ is indeed irreducible and imprimitive in each of these scenarios, as stated in Proposition~\ref{hendriks-possibilities}.

By \cite[Prop.~12.2(1)]{vanderput-singer:1997}, in any case the group of connected components $H/H^\circ$ must be bicyclic. The irreducible and imprimitive algebraic subgroups of $\mathrm{GL}_2(\bar{\mathbb{Q}})$ with bicyclic group of connected components are listed in the following result, which we prove using the classification of the algebraic subgroups of $\mathrm{GL}_2(C)$ developed in \cite{gl2-classification}. In the classification below we denote $\{\pm1\}\times\{\pm1\}$ by $\{\pm1\}^2$ and $\mathbb{G}_m(C)\times\mathbb{G}_m(C)$ by $\mathbb{G}_m(C)^2$.

\begin{lem}\label{imprimitive-classification} If $H$ is an irreducible and imprimitive algebraic subgroup of $\mathrm{GL}_2(C)$ such that $H/H^\circ$ is bicyclic, then $H$ is the subgroup of \begin{equation}\label{imprimitive-form}\{\pm1\}\ltimes\mathbb{G}_m(C)^2=\left\{\begin{pmatrix} \alpha_1 & 0 \\ 0 & \alpha_2\end{pmatrix} \ \middle|\ \alpha_1,\alpha_2\in C^\times\right\}\cup\left\{\begin{pmatrix} 0 & \lambda_1 \\ \lambda_2 & 0\end{pmatrix} \ \middle| \ \lambda_1,\lambda_2\in C^\times\right\}\end{equation} defined by precisely one of the following sets of conditions on $\alpha_1,\alpha_2,\lambda_1$, and $\lambda_2$.
\begin{enumerate}
\item $H=D_m^-$ for some $m\in\mathbb{N}$, defined as the subgroup of \eqref{imprimitive-form} such that $(\alpha_1\alpha_2)^m=1$ and $(\lambda_1\lambda_2)^m=-1$; or
\item $H=D_m^+$ for some $m\in\mathbb{N}$, defined as the subgroup of \eqref{imprimitive-form} such that $(\alpha_1\alpha_2)^m=1$ and $(\lambda_1\lambda_2)^m=1$; or
\item $H=\{\pm1\}^2\ltimes \mathbb{G}_m(C)$, defined as the subgroup of \eqref{imprimitive-form} such that $\alpha_1^2=\alpha_2^2$ and $\lambda_1^2=\lambda_2^2$; or
\item $H=\{\pm1\}\ltimes\mathbb{G}_m(C)^2$ as in \eqref{imprimitive-form}, with no other conditions on $\alpha_1,\alpha_2,\lambda_1$, and $\lambda_2$.
\end{enumerate}
\end{lem}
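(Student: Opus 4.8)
The plan is to exploit the monomial structure directly rather than wading through the full list in \cite{gl2-classification}. Since $H$ is imprimitive we may assume, after conjugating, that $H$ is contained in the group of monomial matrices \eqref{imprimitive-form}; write $\mathbb{T}=\mathbb{G}_m(C)^2$ for the diagonal torus and $D=H\cap\mathbb{T}$. Irreducibility prevents $H\subseteq\mathbb{T}$, so the homomorphism $H\to\{\pm1\}$ recording whether a matrix is diagonal or antidiagonal is surjective with kernel $D$; thus $D$ has index $2$ in $H$ and $H=D\sqcup DJ$ for any fixed antidiagonal $J=\left(\begin{smallmatrix}0 & \lambda_1\\\lambda_2 & 0\end{smallmatrix}\right)\in H$. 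A direct computation shows that conjugation by $J$ induces the swap involution $\tau\colon(\alpha_1,\alpha_2)\mapsto(\alpha_2,\alpha_1)$ on $\mathbb{T}$, and since $D\trianglelefteq H$ the subgroup $D$ (and hence the subtorus $H^\circ=D^\circ$) is $\tau$-stable.

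First I would record two structural facts. (a) $D$ contains a non-scalar element: otherwise $D$ would lie in the scalars, and then both the scalars and $J$ would preserve each eigenline of $J$, forcing $H$ reducible; in particular $H$ is non-abelian, since a non-scalar diagonal element does not commute with $J$. (b) The $\tau$-stable subtori of $\mathbb{T}$ are exactly $\{1\}$, the scalars $T^+=\{(s,s)\}$, the antidiagonal torus $T^-=\{(s,s^{-1})\}$, and $\mathbb{T}$ itself; this holds because a $1$-dimensional subtorus is cut out by a primitive cocharacter $(a,b)$ and $\tau$-stability forces $(b,a)=\pm(a,b)$, i.e. $a=\pm b$. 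The bicyclic hypothesis, which in particular makes $H/H^\circ$ abelian, then rules out $\dim H^\circ=0$: a finite $H$ would equal $H/H^\circ$, contradicting (a). So $\dim H^\circ\in\{1,2\}$.

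The case $\dim H^\circ=2$ gives $D=\mathbb{T}$ and $H=\{\pm1\}\ltimes\mathbb{G}_m(C)^2$, which is (4). When $\dim H^\circ=1$ I would split according to (b). If $H^\circ=T^-$, the product map $(\alpha_1,\alpha_2)\mapsto\alpha_1\alpha_2$ has kernel $T^-$ and identifies $D/H^\circ$ with a finite subgroup $\mu_m$ of $\mathbb{G}_m(C)$, so $D=\{(\alpha_1\alpha_2)^m=1\}$; since $J^2=\lambda_1\lambda_2\cdot I\in D$ one gets $(\lambda_1\lambda_2)^{2m}=1$, and because the product of the two nonzero entries is a conjugation invariant, every antidiagonal element shares the common value $\varepsilon:=(\lambda_1\lambda_2)^m\in\{\pm1\}$; the two signs give exactly $D_m^+$ and $D_m^-$, i.e. (2) and (1). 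If instead $H^\circ=T^+$ (the scalars), the ratio map $(\alpha_1,\alpha_2)\mapsto\alpha_1/\alpha_2$ identifies $\mathbb{T}/T^+$ with $\mathbb{G}_m(C)$, on which $\tau$ acts by inversion; thus $D/H^\circ\cong\mu_n$ and $H/H^\circ$ is generated by $\mu_n$ together with the class of $J$, which acts by inversion. Abelianness forces this inversion to be trivial, so $n\le2$, while fact (a) gives $n\ge2$; hence $n=2$ and $\alpha_1^2=\alpha_2^2$ on $D$. A residual conjugation by a diagonal matrix normalizes $J$ so that $\lambda_1=\lambda_2$, after which every antidiagonal element satisfies $\lambda_1^2=\lambda_2^2$, yielding (3).

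I expect the main obstacle to be the honest use of the bicyclic hypothesis: the key point is that irreducible imprimitive groups are automatically non-abelian, so abelianness of $H/H^\circ$ is a genuine restriction that both forbids $\dim H^\circ=0$ and, in the scalar-component case, collapses the potentially dihedral group $H/H^\circ$ down to $(\mathbb{Z}/2)^2$ by killing the inversion action. The remaining care is bookkeeping: verifying that the constraints on the antidiagonal entries ($(\lambda_1\lambda_2)^m=\pm1$ for $D_m^\pm$, and $\lambda_1^2=\lambda_2^2$ for case (3)) hold for all antidiagonal elements simultaneously and are compatible with the residual diagonal-conjugation freedom, so that $H$ is equal to, and not merely contained in, one of the four listed groups.
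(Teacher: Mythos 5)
Your proof is correct, but it takes a genuinely different route from the paper's. The paper leans on the external classification of algebraic subgroups of $\mathrm{GL}_2$ from \cite{gl2-classification}: it sorts $H$ by its projective image $\bar{H}\subseteq\mathrm{PGL}_2(C)$ (a finite dihedral group, forced to be $D_2$ since the quotient map factors through the abelian group $H/H^\circ$, or else $\bar{D}_\infty$), identifies $H$ with the full preimage $\pi^{-1}(D_2)$ in the first case and with $\mu_n\cdot D_{\infty,\ell}$ in the second, and then pins down items (1) versus (2) by an explicit roots-of-unity computation (the diagonal part is $\langle\zeta_n,\zeta_{2^\ell}\rangle\cdot\bigl\{\bigl(\begin{smallmatrix}\alpha & 0\\ 0 & \alpha^{-1}\end{smallmatrix}\bigr)\bigr\}$, with $m=\tfrac{1}{2}\mathrm{lcm}(n,2^\ell)$ and so on). You instead argue internally: the index-$2$ diagonal subgroup $D=H\cap\mathbb{T}$, the swap involution $\tau$ induced by any antidiagonal $J\in H$, the cocharacter classification of $\tau$-stable subtori of $\mathbb{T}$ as $\{1\}$, the scalars $T^+$, the antidiagonal torus $T^-$, and $\mathbb{T}$, and a case split on $H^\circ$. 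Your route is self-contained, avoids the lcm bookkeeping, and makes transparent exactly where the bicyclic hypothesis acts: it excludes finite $H$ (your fact (a), that irreducibility forces non-commutativity, is the same observation the paper makes), and in the case $H^\circ=T^+$ it collapses the a priori dihedral component group $\mu_n\rtimes\mathbb{Z}/2$ down to the Klein four-group, forcing $n=2$; what the paper's route buys is uniformity with the taxonomy of minimal subgroups $D_{2,\ell}$, $D_{\infty,\ell}$ it cites. The two points of care you flag do go through: in the $T^-$ case you get equality $D=p^{-1}(\mu_m)$ (not mere containment) because $D$ contains $\ker p=T^-$ and surjects onto $\mu_m$, and then for any antidiagonal $J'$ with $(\lambda_1'\lambda_2')^m=\varepsilon$ one has $J'J^{-1}\in D$, so $H$ equals $D_m^{\pm}$ on the nose; and in case (3) the normalization $\lambda_1=\lambda_2$ needs one further conjugation by $\mathrm{diag}(t,1)$ with $t^2=\lambda_2/\lambda_1$, which is harmless since the classification is up to conjugacy and diagonal conjugation centralizes $\mathbb{T}$, leaving $D$ untouched.
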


\begin{proof} The algebraic subgroups $H\subseteq \mathrm{GL}_2(\bar{\mathbb{Q}})$ are classified in \cite{gl2-classification} according to their projective image $\bar{H}\subseteq \mathrm{PGL}_2(\bar{\mathbb{Q}})$. Since $H$ is irreducible and imprimitive with bicyclic group of connected components, it is an infinite non-commutative subgroup of \eqref{imprimitive-form}, and therefore its projective image is either $\bar{H}=D_n$, the dihedral group of order $2n$ for some $n\geq 2$, or else $\bar{H}=\bar{D}_\infty$, the projective image of \[D_\infty=\left\{\begin{pmatrix} \alpha & 0 \\ 0 & \alpha^{-1}\end{pmatrix} \ \middle| \ \alpha\in\bar{\mathbb{Q}}^\times\right\}\cup\left\{\begin{pmatrix} 0 & -\lambda \\ \lambda^{-1} & 0\end{pmatrix} \ \middle| \ \lambda\in\bar{\mathbb{Q}}^\times\right\}.\] 

If $\bar{H}=D_n$ then $D_n$ must be commutative, since the algebraic quotient map $H\rightarrow \bar{H}$ factors through $H/H^\circ$, which we are assuming is abelian, and therefore $n=2$ (corresponding to $D_n\simeq K_4$, the Klein four-group) in this case. By \cite[Thm.~4]{gl2-classification}, the \emph{minimal} subgroups (see \cite[\S2]{gl2-classification} for the definition) of $\mathrm{GL}_2(\bar{\mathbb{Q}})$ having projective image $D_2$ are ${D}_{2,\ell}$ for some $\ell\in\mathbb{Z}_{\geq 0}$, where \[{D}_{2,\ell}:=\left\langle\zeta_{2^{\ell+1}}\begin{pmatrix} i & 0 \\ 0 & -i\end{pmatrix},\begin{pmatrix} 0 & i \\ i & 0\end{pmatrix}\right\rangle\] and $\zeta_{2^{\ell+1}}$ denotes a primitve $(2^{\ell+1})$-th root of unity. Therefore the only infinite subgroups $H\subseteq \mathrm{GL}_2(\bar{\mathbb{Q}})$ having projective image $D_2$  are given by $\bar{\mathbb{Q}}^\times\cdot {D}_{2,\ell}$, which are all equal to $\pi^{-1}(D_2)$, where $\pi:\mathrm{GL}_2(\bar{\mathbb{Q}})\rightarrow\mathrm{PGL}_2(\bar{\mathbb{Q}})$ is the projection map. Finally, note that $\bar{\mathbb{Q}}^\times \cdot {D}_{2,\ell}$ for any $\ell\in\mathbb{Z}_{\geq 0}$ is precisely the subgroup of \eqref{imprimitive-form} defined by the conditions in item~(3): $\alpha_1^2=\alpha_2^2$ and $\lambda_1^2=\lambda_2^2$.

If $\bar{H}=\bar{D}_\infty$, then either $H=\{\pm 1\}\ltimes\mathbb{G}_m(\bar{\mathbb{Q}})$ in \eqref{imprimitive-form} as in item~(4), or else $H=\mu_n\cdot D_{\infty,\ell}$ for some $\ell\in\mathbb{Z}_{\geq 0}$ and some $n\in\mathbb{N}$, where $\mu_n$ denotes the group of $n$-th roots of unity, and \[D_{\infty,\ell}:=\left\langle\left\{\begin{pmatrix}\alpha & 0 \\ 0 & \alpha^{-1}\end{pmatrix} \ \middle| \ \alpha\in\bar{\mathbb{Q}}^\times\right\},\begin{pmatrix} 0 & \zeta_{2^{\ell+1}} \\ \zeta_{2^{\ell+1}} & 0\end{pmatrix}\right\rangle,\] where again $\zeta_{2^{\ell+1}}$ is a primitive $(2^{\ell+1})$-th root of unity, since by \cite[Thm.~4]{gl2-classification} the $D_{\infty,\ell}$ are all the minimal subgroups of $\mathrm{GL}_2(\bar{\mathbb{Q}})$ with projective image $\bar{D}_\infty$. All of these groups have the property that $H/H^\circ$ is bicyclic. It remains to show that for any $n\in\mathbb{N}$ and $\ell\in\mathbb{Z}_{\geq 0}$ the group $\mu_n\cdot D_{\infty,\ell}$ is one of the groups described by the conditions in either item~(1) or item~(2). Let us write $\Delta_{n,\ell}:=\left\{\left(\begin{smallmatrix}\alpha_1 & 0 \\ 0 & \alpha_2\end{smallmatrix}\right) \ \middle| \ \alpha_1,\alpha_2\in\bar{\mathbb{Q}}^\times\right\}\cap (\mu_n\cdot D_{\infty,\ell})$, the group of all diagonal matrices contained in $\mu_n\cdot D_{\infty,\ell}$, and $\nabla_{n,\ell}:=\left\{\left(\begin{smallmatrix} 0 & \lambda_1\\ \lambda_2 & 0\end{smallmatrix}\right) \ \middle| \ \lambda_1,\lambda_2\in\bar{\mathbb{Q}}^\times\right\}\cap (\mu_n\cdot D_{\infty,\ell})$ for the complementary coset of $\Delta_{n,\ell}$ in $\mu_n\cdot D_{\infty,\ell}$ consisting of all the antidiagonal matrices contained in $\mu_n\cdot D_{\infty,\ell}$. Then we see that $\nabla_{n,\ell}=\zeta_{2^{\ell+1}}\cdot\Delta_{n,\ell}\cdot\left(\begin{smallmatrix} 0 & 1 \\ 1 & 0\end{smallmatrix}\right)$, and $\Delta_{n,\ell}=\langle \zeta_n,\zeta_{2^\ell}\rangle\cdot\left\{\left(\begin{smallmatrix} \alpha & 0 \\ 0 & \alpha^{-1} \end{smallmatrix}\right) \ \middle| \ \alpha\in\bar{\mathbb{Q}}^\times\right\}$. Therefore, 
\[\Delta_{n,\ell}=\left\{\begin{pmatrix}\alpha_1 & 0 \\ 0 & \alpha_2\end{pmatrix} \ \middle| \ \alpha_1,\alpha_2\in\bar{\mathbb{Q}}^\times, \ (\alpha_1\alpha_2)^m=1\right\},\quad\text{where}\quad m:=\begin{cases}\frac{1}{2}\mathrm{lcm}(n,2^\ell) \ \text{if} \ \ell\geq 1;\\ \frac{n}{2} \ \text{if} \ \ell=0 \ \text{and} \ 2|n;\\ n \ \text{if} \ \ell=0 \ \text{and} \ 2\nmid n;\end{cases}\]
because $(\langle\zeta_n,\zeta_{2^\ell}\rangle)^2=\langle\zeta_m\rangle$ with $m$ defined as above. Since \[\nabla_{n,\ell}=\left\{\begin{pmatrix} 0 & \alpha_1\zeta_{2^{\ell+1}} \\ \alpha_2\zeta_{2^{\ell+1}} & 0\end{pmatrix} \ \middle| \ \alpha_1,\alpha_2\in\bar{\mathbb{Q}}^\times, \ (\alpha_1\alpha_2)^m=1\right\},\] we have that $\mu_n\cdot D_{\infty,\ell}$ is the group described in item~(2) if and only if $2^{\ell}|m$ (which occurs precisely when either $\ell=0$ or else $\ell\geq 1$ and $2^{\ell+1}|n$), and $\mu_n\cdot D_{\infty,\ell}$ is the group described in item~(1) otherwise, since for $\ell\geq 1$ we always have that $2^{\ell-1}|m$, and therefore $(\zeta_{2^{\ell+1}}^2\alpha_1\alpha_2)^m=(\zeta_{2^\ell}\alpha_1\alpha_2)^m=-1$, precisely when $2^\ell\nmid m$, $2^{\ell-1}|m$ (with $\ell\geq 1$), and $(\alpha_1\alpha_2)^m=1$.\end{proof}

\begin{rem}\label{imprimitive-diag-det-rem} Given an irreducible and imprimitive algebraic subgroup $H\subseteq \mathrm{GL}_2(\bar{\mathbb{Q}})$ such that the group of connected components $H/H^\circ$ is bicyclic, we can uniquely identify it among the possibilities listed in Lemma~\ref{imprimitive-classification} by the knowledge of two auxiliary groups: \[\Delta(H):=\left\{\begin{pmatrix} \alpha_1 & 0 \\ 0 & \alpha_2\end{pmatrix} \ \middle| \ \alpha_1,\alpha_2\in\bar{\mathbb{Q}}^\times\right\}\cap H \qquad \text{and} \qquad \mathrm{det}(H)=\{\mathrm{det}(h) \ | \ h\in H\}\subseteq\mathbb{G}_m(\bar{\mathbb{Q}}),\] respectively the subgroup of diagonal matrices in $H$ and the image of $H$ under the determinant map. Indeed, $\Delta(H)=\mathbb{G}_m(\bar{\mathbb{Q}})^2$ if and only if $H=\{\pm 1\}\ltimes\mathbb{Q}_m(\bar{\mathbb{Q}})^2$ as in Lemma~\ref{imprimitive-classification}(4); $\Delta(H)=\left\{\left(\begin{smallmatrix}\alpha_1 & 0 \\ 0 & \alpha_2\end{smallmatrix}\right) \ \middle| \ \alpha_1,\alpha_2\in\bar{\mathbb{Q}}^\times, \ \alpha_1^2=\alpha_2^2\right\}$ if and only if $H=\{\pm1\}^2\ltimes\mathbb{G}_m(\bar{\mathbb{Q}})$ is as in Lemma~\ref{imprimitive-classification}(3); and $\Delta(H)=\left\{\left(\begin{smallmatrix}\alpha_1 & 0 \\ 0 & \alpha_2\end{smallmatrix}\right) \ \middle| \ \alpha_1,\alpha_2\in\bar{\mathbb{Q}}^\times, \ (\alpha_1\alpha_2)^m\right\}$ for some $m\in\mathbb{N}$ if and only if $H$ is one of the groups $D_m^-$ or $D_m^+$ described respectively in items (1) or (2) of Lemma~\ref{imprimitive-classification}. To decide between these cases, note that $\mathrm{det}(H)=\langle \alpha_1\alpha_2, -\lambda_1\lambda_2\rangle$ has $\mathrm{det}(H)^m=\langle(-1)^m(\lambda_1\lambda_2)^m\rangle$; hence, if $m$ is even, then $H=D_m^-$ if and only if $\mathrm{det}(H)=\mu_{2m}$ and $H=D_m^+$ if and only if $\mathrm{det}(H)=\mu_m$; and if $m$ is odd, then $H=D_m^-$ if and only if $\mathrm{det}(H)=\mu_m$ and $H=D_m^+$ if and only if $\mathrm{det}(H)=\mu_{2m}$.\end{rem}

\subsection{Irreducible and imprimitive (1): diagonalizable over the quadratic extension}\label{imprimitive-sec:1}

Supposing there are no solutions to \eqref{ric1} in $\bar{\mathbb{Q}}(x)$, but there are two solutions $u,\bar{u}\in\bar{\mathbb{Q}}(x_2)$ to \eqref{ric1}, Galois-conjugate over $\bar{\mathbb{Q}}(x)$, the system \eqref{mateq} \begin{align} \sigma(Y)&=\begin{pmatrix}0 & 1 \\ -b & -a\end{pmatrix} Y\quad &\text{with fundamental solution matrix} \qquad Y&=\begin{pmatrix} y_1 & y_2 \\ \sigma(y_1) & \sigma(y_2)\end{pmatrix}; \ \text{and}\nonumber \\ \sigma(Z)&=\begin{pmatrix} u & 0 \\ 0 & \bar{u}\end{pmatrix}Z\quad&\text{with fundamental solution matrix}\qquad Z&=\begin{pmatrix} z_1 & 0 \\ 0 & z_2\end{pmatrix}\label{quadratic-reducible-system}\end{align} are equivalent over $\bar{\mathbb{Q}}(x_2)$ via the gauge transformation $Z=TY$, where (cf.~Remark~\ref{imprimitive-gauge})\begin{equation}\label{imprimitive-1-gauge}T:=\begin{pmatrix}\frac{\bar{u}}{u-\bar{u}} & \frac{-1}{u-\bar{u}} \\ \frac{u}{u-\bar{u}} & \frac{-1}{u-\bar{u}}\end{pmatrix}\in\mathrm{GL}_2(\bar{\mathbb{Q}}(x_2)).\end{equation} Let us write $S_2=k_2[Y,\mathrm{det}(Y)^{-1}]=k_2[z_1,z_2,(z_1z_2)^{-1}]$ for the $\sigma$-PV ring for \eqref{mateq} (or equivalently for \eqref{quadratic-reducible-system}) over $k_2$. Then $S_1=k_1[Y,\mathrm{det}(Y)^{-1}]\subset S_2$ is a $\sigma$-PV ring for \eqref{mateq} over $k_1$. Let us also write $H_i=\mathrm{Gal}_\sigma(S_i/k_i)$ for $i=1,2$, and $\tilde{H}=\mathrm{Gal}_\sigma(S_2/k_1)$. Since \eqref{quadratic-reducible-system} is a diagonal system, the group $H_2$ is diagonalizable. By Proposition~\ref{fiber-product}, \[\tilde{H}\simeq H_1\times_{\mu_m}\mu_2,\] where $m\in\{1,2\}$ is determined by the intersection $S_1\cap k_2=k_m$ inside $S_2$, and $H_2$ is an index-$m$ subgroup of $H_1$. We claim that any $\tilde{\tau}\in\tilde{H}$ such that $\tilde{\tau}(x_2)=-x_2$ has the property that $\tau:=\tilde{\tau}|_{S_1}\in H_1$ is given by an anti-diagonal matrix. From this it will follow that $H_2$ has index exactly $2$ in $H_1$, and $H_1=H_2\cup H_2\cdot\tau$ is irreducible and imprimitive, as claimed in Proposition~\ref{hendriks-possibilities}(4).

To see this, let $M_\tau\in\mathrm{GL}_2(C)$ such that $\tau(Y)=YM_\tau$. Then for the gauge transformation $T$ given in \eqref{imprimitive-1-gauge} we see that \[\tilde{\tau}(Z)=\tilde{\tau}(TY)=\bar{T}YM_\tau=\left(\begin{smallmatrix} 0 & 1 \\ 1 & 0\end{smallmatrix}\right)ZM_\tau.\] On the other hand, we see that $\sigma(\tilde{\tau}(z_1))=\tilde{\tau}(\sigma(z_1))=\tilde{\tau}(uz_1)=\bar{u}\tilde{\tau}(z_1)$, and therefore $\tilde{\tau}(z_1)=\lambda_2 z_2$ for some $\lambda_2\in C^\times$. A similar computation shows that $\tilde{\tau}(z_2)=\lambda_1z_1$ for some $\lambda_1\in C^\times$. From this it follows that \[\tilde{\tau}(Z)=\begin{pmatrix}\tilde{\tau}(z_1) & 0 \\ 0 & \tilde{\tau}(z_2)\end{pmatrix}=\begin{pmatrix}\lambda_2z_2 & 0 \\ 0 & \lambda_1z_1\end{pmatrix}=\begin{pmatrix}0 & 1 \\ 1 & 0 \end{pmatrix}Z\begin{pmatrix} 0 & \lambda_1 \\ \lambda_2 & 0 \end{pmatrix}.\] Hence $M_\tau=\left(\begin{smallmatrix} 0 & \lambda_1 \\ \lambda_2 & 0 \end{smallmatrix}\right)$, as we wanted to show.

\begin{rem}\label{imprimitive-1-h-rem} Having established that the $\sigma$-Galois group $H_1$ for \eqref{mateq} over $k_1$ is indeed irreducible and imprimitive as claimed in Proposition~\ref{hendriks-possibilities}(4), we can compute this $H_1$ from among the possibilities listed in Lemma~\ref{imprimitive-classification} as explained in Remark~\ref{imprimitive-diag-det-rem}, by determining the subgroup $\Delta(H_1)$ of diagonal matrices in $H_1$, and the group $\mathrm{det}(H_1)\subseteq\mathbb{G}_m(C)$.

Since $\mathrm{det}(H_1)$ is the $\sigma$-Galois group for the system $\sigma(y)=by$, we see that $\mathrm{det}(H_1)=\mu_m$ if and only if $m\in\mathbb{N}$ is the smallest positive integer such that $b^m=\frac{\sigma(f)}{f}$ for some $f\in\bar{\mathbb{Q}}(x)$, and if there is no such $m$ then $\mathrm{det}(H_1)=\mathbb{G}_m(C)$.

Since $\Delta(H_1)=H_2$ is the $\sigma$-Galois group for \eqref{quadratic-reducible-system} over $k_2$, we can compute the defining equations for \[\Delta(H_1)\subseteq \left\{\begin{pmatrix}\alpha_1 & 0 \\ 0 & \alpha_2\end{pmatrix} \ \middle| \ \alpha_1,\alpha_2\in C^\times\right\}\] as follows:
\begin{enumerate}
\item $(\alpha_1\alpha_2)^m=1$ if and only if $(u\bar{u})^m=\frac{\sigma(f)}{f}$ for some $f\in\bar{\mathbb{Q}}(x_2)^\times$ (and in this case $H_1$ is $D_m^-$ or $D_m^+$);
\item $\alpha_1^2=\alpha_2^2$ if and only if $\left(\frac{u}{\bar{u}}\right)^2=\frac{\sigma(f)}{f}$ for some $f\in\bar{\mathbb{Q}}(x_2)$ (and in this case $H_1=\{\pm1\}^2\ltimes\mathbb{G}_m(C)$) ;
\item if none of these possibilities holds, then $\Delta(H_1)=\mathbb{G}_m(C)^2$ (and in this case $H_1=\{\pm1\}\ltimes\mathbb{G}_m(C)^2$).
\end{enumerate} \end{rem}

The computation of the $\sigma\delta$-Galois group $G_1$ for \eqref{mateq} over $k_1$, assuming that the corresponding $\sigma$-Galois group $H_1$ has already been computed as in Remark~\ref{imprimitive-1-h-rem}, will by achieved analogously in the following result, by studying the $\sigma\delta$-Galois group $G_2$ for \eqref{mateq} over $k_2$.

\begin{prop}\label{imprimitive-prop-1} Suppose there are no solutions to \eqref{ric1} in $\bar{\mathbb{Q}}(x)$, and let $u,\bar{u}\in \bar{\mathbb{Q}}(x_2)$ satisfy \eqref{ric1}. Then $G_1$ is the subgroup of \begin{equation}\label{imprimitive-eq-1} \{\pm1\}\ltimes\mathbb{G}_m(C)^2=\left\{\begin{pmatrix}\alpha _1& 0 \\ 0 & \alpha_2\end{pmatrix}, \begin{pmatrix}0 & \lambda_1 \\ \lambda_2 & 0 \end{pmatrix} \ \middle| \ \alpha_1\alpha_2\neq 0, \ \lambda_1\lambda_2\neq 0\right\}\end{equation} defined by the following conditions on $\alpha_1,\alpha_2,\lambda_1$, and $\lambda_2$.
\begin{enumerate}
\item If $H_1=D_m^-$ as in Lemma~\ref{imprimitive-classification}(1) or $H_1=D_m^+$ as in Lemma~\ref{imprimitive-classification}(2), then $G_1=H_1$.
\item If $H_1=\{\pm1\}^2\ltimes\mathbb{G}_m(C)$ as in Lemma~\ref{imprimitive-classification}(3), then:
\begin{enumerate}
\item there exist $0\neq c\in 2\mathbb{Z}$ and $g\in\bar{\mathbb{Q}}(x)$ such that $\frac{\delta(b)}{b}=\sigma(g)-g+c$ if and only if $G_1$ is the subgroup of $H_1$ defined by $\delta\left(\frac{\delta(\alpha_1)}{\alpha_1}+\frac{\delta(\alpha_2)}{\alpha_2}\right)=0=\delta\left(\frac{\delta(\lambda_1)}{\lambda_1}+\frac{\delta(\lambda_2)}{\lambda_2}\right)$; 
\item otherwise, $G_1=H_1$.
\end{enumerate}
\item If $H_1=\{\pm1\}\ltimes\mathbb{G}_m(C)^2$ as in Lemma~\ref{imprimitive-classification}(4), then:
\begin{enumerate}
\item there exist $c\in 2\mathbb{Z}$ and $g\in\bar{\mathbb{Q}}(x)$ such that $\frac{\delta(b)}{b}=\sigma(g)-g+c$ if and only if $\delta\left(\frac{\delta(\alpha_1)}{\alpha_1}+\frac{\delta(\alpha_2)}{\alpha_2}\right)=0=\delta\left(\frac{\delta(\lambda_1)}{\lambda_1}+\frac{\delta(\lambda_2)}{\lambda_2}\right)$; moreover, $c=0$ if and only if $\delta(\alpha_1\alpha_2)=0=\delta(\lambda_1\lambda_2)$;
\item otherwise, $G_1=H_1$.
\end{enumerate}
\end{enumerate}
\end{prop}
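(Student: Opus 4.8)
The plan is to descend to the quadratic extension $k_2$, where the system diagonalizes, compute the diagonal part of $G_1$ there, and then reassemble $G_1$ using the $\{\pm1\}$-symmetry coming from $\mathrm{Gal}(k_2/k_1)$. Concretely, by the gauge transformation \eqref{imprimitive-1-gauge} the system \eqref{mateq} is equivalent over $k_2$ to the diagonal system \eqref{quadratic-reducible-system} $\sigma(Z)=\mathrm{diag}(u,\bar u)Z$, so the subgroup of diagonal matrices $\Delta(G_1)$ coincides with the $\sigma\delta$-Galois group $G_2$ over $k_2$ of \eqref{quadratic-reducible-system}. First I would compute $G_2$ by applying Proposition~\ref{galdiag} and Remark~\ref{diagonalizable-rem} with $u_1=u$ and $u_2=\bar u$ (replacing $q$ by $q_2$ and $\delta(x)=x$ by $\delta(x_2)=\tfrac12 x_2$), recording the $\mathbb Z$-module $M\subseteq\mathbb Z^2$ of exponent vectors $(m_1,m_2)$ producing relations of type (i) or (ii) there.

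The key structural input is symmetry. As established in the discussion preceding Proposition~\ref{imprimitive-prop-1}, any $\tilde\tau\in\tilde H$ restricting to $x_2\mapsto -x_2$ acts antidiagonally, and conjugation by $\tilde\tau$ swaps $z_1\leftrightarrow z_2$, hence swaps $u\leftrightarrow\bar u$ and interchanges the two coordinates of $G_2=\Delta(G_1)$. Therefore $M$ is invariant under $(m_1,m_2)\mapsto(m_2,m_1)$; since $\mathbb Z^2/M$ is torsion-free by Remark~\ref{diagonalizable-rem}, this forces $M\in\{\,0,\ \mathbb Z(1,1),\ \mathbb Z(1,-1),\ \mathbb Z^2\,\}$. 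I would then match the \emph{algebraic} (type (i)) part of $M$ against the already-computed $\Delta(H_1)$ from Remark~\ref{imprimitive-1-h-rem}: the product-torsion case $(u\bar u)^m=\tfrac{\sigma f}{f}$ places $(1,1)$ into the algebraic part and yields $H_1=D_m^{\pm}$, the ratio case $(u/\bar u)^2=\tfrac{\sigma f}{f}$ places $(1,-1)$ into the algebraic part and yields $H_1=\{\pm1\}^2\ltimes\mathbb G_m(C)$, and the remaining case yields $H_1=\{\pm1\}\ltimes\mathbb G_m(C)^2$. By swap-invariance the only possible \emph{additional} differential relation is of type (ii) and lies in whichever of the directions $(1,1)$ (the determinant $\alpha_1\alpha_2$) or $(1,-1)$ (the ratio $\alpha_1/\alpha_2$) is \emph{not} already algebraically constrained by $\Delta(H_1)$.

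Next I would translate the surviving type-(ii) relation into the coefficient $b$ and read off the antidiagonal coset. In the $\{\pm1\}^2\ltimes\mathbb G_m$ and $\{\pm1\}\ltimes\mathbb G_m^2$ cases the determinant $\alpha_1\alpha_2$ governs; since $b=u\bar u\cdot q_2\tfrac{\sigma(u-\bar u)}{u-\bar u}$ differs from $u\bar u$ by a $\sigma$-coboundary over $k_1$, Corollary~\ref{diffprod} (applied with the single function $b$) identifies the relation $\delta\bigl(\tfrac{\delta\alpha_1}{\alpha_1}+\tfrac{\delta\alpha_2}{\alpha_2}\bigr)=0$ with $\tfrac{\delta b}{b}=\sigma(g)-g+c$ for $g\in\bar{\mathbb Q}(x)$, and the subsidiary dichotomy $c=0\Leftrightarrow\delta(\alpha_1\alpha_2)=0$ with the ``moreover'' clause of Proposition~\ref{galdiag}(ii). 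The companion relation on the antidiagonal coset, $\delta\bigl(\tfrac{\delta\lambda_1}{\lambda_1}+\tfrac{\delta\lambda_2}{\lambda_2}\bigr)=0$, then follows automatically because $\tilde\tau^{\,2}\in\Delta(G_1)$ is the scalar $(\lambda_1\lambda_2,\lambda_1\lambda_2)$, so that the determinant relation on $G_2$ transports to $\lambda_1\lambda_2$; this, together with the algebraic shape of $H_1$ from Lemma~\ref{imprimitive-classification}, pins down $G_1$ inside \eqref{imprimitive-eq-1} and in particular gives $G_1=H_1$ whenever no such relation exists.

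The main obstacle I anticipate is the parity bookkeeping responsible for the conditions $c\in 2\mathbb Z$ and for the assertion $G_1=H_1$ in the $D_m^{\pm}$ case, both of which hinge on comparing the $q$-discrete residue theory over $k_1$ (orbits under $q$) with that over $k_2$ (orbits under $q_2=\sqrt q$). The delicate point is that each $q$-orbit splits into two interleaved $q_2$-orbits and that the behavior at $x_2=0$ and $x_2=\infty$ must be tracked separately; carrying out this comparison (using Lemma~\ref{qres-computation} and Proposition~\ref{indsum} over $k_2$) is what produces the factor of two in $c\in 2\mathbb Z$. In the $D_m^{\pm}$ case the surviving direction is the \emph{ratio}, whose logarithmic derivative $\tfrac{\delta(u/\bar u)}{u/\bar u}$ is anti-invariant under $\tilde\tau$; its residue at infinity is forced to equal its own negative, hence to vanish, and one must then rule out a genuine relation by a twisted-summability argument exploiting $q_2\notin q^{\mathbb Z}$, exactly as in the proof of Proposition~\ref{wnotrat} and invoking the integrability results of \cite{arreche-singer:2016, schaefke-singer:2019}. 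Establishing these residue-symmetry and twisted-summability statements carefully over the quadratic extension is the technical heart of the argument.
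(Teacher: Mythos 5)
Your overall skeleton matches the paper's proof: descend to $k_2$, identify $\Delta(G_1)$ with the $\sigma\delta$-Galois group $G_2$ of the diagonal system \eqref{quadratic-reducible-system} (computed via Proposition~\ref{galdiag} after renormalizing $\delta_2=2\delta$), use the swap symmetry to force $M\in\{0,\ \mathbb{Z}(1,1),\ \mathbb{Z}(1,-1),\ \mathbb{Z}^2\}$ (the paper gets this by conjugating the telescoping relation \eqref{imprimitive-symmetry} rather than by conjugating in $\tilde{H}$, but these are the same symmetry), translate the $(1,1)$-direction into the condition on $\frac{\delta(b)}{b}$ via $b=u\bar{u}\,\frac{\sigma(u-\bar{u})}{u-\bar{u}}=q_2\,u\bar{u}\,\frac{\sigma(w)}{w}$ with $u-\bar{u}=x_2w$ (your formula counts the $q_2$ twice, a minor slip), and transport the relation to the antidiagonal coset by squaring (the paper squares in case~(2) and uses divisibility of $\mathrm{det}(G_2)$ in case~(3), but your squaring argument works in both). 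The genuine gap is in the step you yourself flag as the technical heart: ruling out a purely \emph{differential} relation in the $(1,-1)$-direction when $H_1\neq\{\pm1\}^2\ltimes\mathbb{G}_m(C)$, which is exactly what items (1) and (3) need. Your proposed tool --- a twisted-summability argument with $q_2\notin q^{\mathbb{Z}}$ as in Proposition~\ref{wnotrat}, plus the integrability results of \cite{arreche-singer:2016, schaefke-singer:2019} --- does not apply: since $q_2$-$\mathrm{dres}\bigl(\frac{\delta_2(u)}{u},\infty\bigr)=d=q_2$-$\mathrm{dres}\bigl(\frac{\delta_2(\bar{u})}{\bar{u}},\infty\bigr)$, the constant in a $(1,-1)$-relation is forced to be $0$ and the equation is \emph{plain} telescoping $\frac{\delta_2(u)}{u}-\frac{\delta_2(\bar{u})}{\bar{u}}=\sigma(g)-g$; there is no twist, no cocycle, and Sah's lemma has no purchase. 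What the paper actually proves is the sharper, elementary equivalence $(1,-1)\in M\iff H_1=\{\pm1\}^2\ltimes\mathbb{G}_m(C)$: by Proposition~\ref{indsum}, a $(1,-1)$-relation kills all finite $q_2$-discrete residues of $\frac{\delta_2(u/\bar{u})}{u/\bar{u}}$; comparing with the reduced form $u\frac{\sigma(v)}{v}=ex_2^n\frac{p_1}{p_2}$ one finds the reduced form of $u/\bar{u}$ is $(-1)^n\frac{\tilde{p}_1}{\tilde{p}_2}$, and the residue vanishing forces $\tilde{p}_1=1=\tilde{p}_2$, i.e.\ $u/\bar{u}=(-1)^n\frac{\sigma(\tilde{f})}{\tilde{f}}$ with $n$ necessarily odd (else $\alpha_1=\alpha_2$ on all of $H_2$, impossible by Lemma~\ref{imprimitive-classification}), whence $(u/\bar{u})^2=\frac{\sigma(\tilde{f}^2)}{\tilde{f}^2}$ and $H_1$ is the Klein-type group. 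So the differential $(1,-1)$-relation is automatically algebraic, and in the $D_m^{\pm}$ and full cases it simply cannot occur; no appeal to integrability is needed or available.

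A second omission concerns case~(2): the statement there requires $0\neq c$, and you apply the dichotomy ``$c=0\iff\delta(\alpha_1\alpha_2)=0$'' uniformly, which would create a spurious subcase. The paper rules out $c=0$ in the Klein case by noting that then $G_1$ would satisfy $\delta(\alpha_1\alpha_2)=0$ together with $(\alpha_1/\alpha_2)^2=1$, hence be conjugate into $\mathrm{GL}_2(C^\delta)$, hence commutative by \cite[Thm.~3.7(ii)]{arreche-singer:2016}, contradicting Zariski-density in the noncommutative $H_1$ (Proposition~\ref{dense}). Finally, the parity $c\in2\mathbb{Z}$ does not require the $q$- versus $q_2$-orbit-splitting residue comparison you anticipate: the paper obtains it by the simple averaging trick of adding the relation \eqref{imprimitive-symmetry} with $m_1=m_2=1$ to its Galois conjugate and taking $f=\tfrac{1}{2}(g+\bar{g})+\frac{\delta(w)}{w}\in\bar{\mathbb{Q}}(x)$, so the constants add and the certificate descends to $\bar{\mathbb{Q}}(x)$ in one stroke. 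With the $(1,-1)$-equivalence supplied and the $c\neq 0$ argument added, your outline would close up into the paper's proof.
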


\begin{proof}

Since they systems \eqref{mateq} and \eqref{quadratic-reducible-system} are equivalent over $k_2$, and the latter system is diagonal, we can compute $G_2$ with Proposition~\ref{galdiag} and Remark~\ref{diagonalizable-rem}, but with a small caveat. Namely, after replacing $\delta$ with $\delta_2:=2\delta$, we see that $k_2$ as a $\sigma\delta_2$-field behaves just as $k_1$: $\sigma(x_2)=q_2x_2$ and $\delta_2(x_2)=x_2$. Thus we may compute the $\delta_2$-algebraic group $G_2\subseteq\mathbb{G}_m(C)^2$ over $k_2$ using the procedure described in Remark~\ref{diagonalizable-rem} exactly as stated there, and then simply replace every instance of $\delta_2$ in the defining equations for $G_2$ with $\frac{1}{2}\delta$ a posteriori. But since the system \eqref{quadratic-reducible-system} has such a special form, not every possibility listed in Proposition~\ref{galdiag} may occur.

We saw in Remark~\ref{diagonalizable-rem} that $G_2$ is a proper subgroup of $\mathbb{G}_m(C)^2$ if and only if there exist: $m_1,m_2\in\mathbb{Z}$, not both zero and with $\mathrm{gcd}(m_1,m_2)=1$; $c\in \mathbb{Z}$; and $g\in\bar{\mathbb{Q}}(x_2)$, such that \begin{equation}\label{imprimitive-symmetry}m_1\frac{\delta_2(u)}{u}+m_2\frac{\delta_2(\bar{u})}{\bar{u}}=\sigma(g)-g+c \qquad \Longleftrightarrow \qquad m_2\frac{\delta_2(u)}{u}+m_1\frac{\delta_2(\bar{u})}{\bar{u}}=\sigma(\bar{g})-\bar{g}+c.\end{equation} Let us consider the submodule $M\subseteq\mathbb{Z}^2$ generated by relatively prime pairs $(m_1,m_2)$ such that there exist $g\in\bar{\mathbb{Q}}(x_2)$ and $c\in\mathbb{Z}$ satisfying the above conditions. Then, as we saw in Remark~\ref{diagonalizable-rem}, either $M=\{0\}$ is trivial; or $M=\mathbb{Z}\cdot (m_1,m_2)$ is infinite cyclic; or $M=\mathbb{Z}^2$. Moreover, $M=\{0\}$ is trivial if and only if the $\sigma\delta$-Galois group $G_2$ for \eqref{quadratic-reducible-system} is all of $\mathbb{G}_m(C)^2$. In  this case we must have $G_1=H_1=\{\pm 1 \}\ltimes\mathbb{G}_m(C)^2$, because $G_1$ is Zariski-dense in $H_1$ by Proposition~\ref{dense}, and therefore $G_1$ contains at least one anti-diagonal matrix, whence it contains all anti-diagonal matrices.

From now on we assume that $M$ is not trivial. It follows from \eqref{imprimitive-symmetry} that at least one of $(1,1)$ or $(1,-1)$ belongs to $M$. In any case it is useful to observe that \[q_2\text{-}\mathrm{dres}\left(\frac{\delta_2(u)}{u},\infty\right)=d=q_2\text{-}\mathrm{dres}\left(\frac{\delta_2(\bar{u})}{\bar{u}},\infty\right),\] where $d\in\mathbb{Z}$ is the common degree of $u$ and $\bar{u}$ considered as rational functions in $x_2$. Therefore, $(1,-1)\in M$ if and only if $\delta(\alpha_1\alpha_2)=0$ for every $\left(\begin{smallmatrix}\alpha_1 & 0 \\ 0 & \alpha_2\end{smallmatrix}\right)\in G_2$.

We claim that actually $(1,-1)\in M$ if and only if $H_1=\{\pm 1\}^2\ltimes\mathbb{G}_m(C)$ as in Lemma~\ref{imprimitive-classification}(3). As explained in Remark~\ref{imprimitive-1-h-rem}, $H_1=\{\pm 1\}^2\ltimes\mathbb{G}_m(C)$ if and only if there exists $f\in\bar{\mathbb{Q}}(x_2)^\times$ such that $\left(\frac{u}{\bar{u}}\right)^2=\frac{\sigma(f)}{f}$, which in turn implies that \[\frac{\delta_2(u)}{u}-\frac{\delta_2(\bar{u})}{\bar{u}}=\sigma\left(\frac{1}{2}\frac{\delta_2(f)}{f}\right)-\frac{1}{2}\frac{\delta_2(f)}{f}.\] Thus if $H=\{\pm1\}\ltimes\mathbb{G}_m(C)$ then $(1,-1)\in M$. To establish the opposite implication, let us study the \emph{reduced form} of $u$: there exists $v\in\bar{\mathbb{Q}(x_2)}$ such that $u\frac{\sigma(v)}{v}=ex_2^n\frac{p_1}{p_2}$, where $e\in\bar{\mathbb{Q}}^\times$ is such that if $e\in q_2^\mathbb{Z}$ then $e=1$, $n\in\mathbb{Z}$ is arbitrary, and $p_1,p_2\in\bar{\mathbb{Q}}[x_2]$ are monic such that $\mathrm{gcd}(x_2,p_1)=\mathrm{gcd}(x_2,p_2)=\mathrm{gcd}(p_1,\sigma^m(p_2))=1$ for every $m\in\mathbb{Z}$. We say that $e x_2^n\frac{p_1}{p_2}$ is the \emph{reduced form} of $u$. We then see that the reduced form of $\bar{u}$ is $(-1)^n e x_2^n\frac{\bar{p}_1}{\bar{p}_2}$. Although it need not be the case that the reduced form of $\frac{u}{\bar{u}}$ is exactly \[(-1)^n\frac{p_1\bar{p}_2}{p_2\bar{p}_1},\] (because it is possible for $\mathrm{gcd}(p_1,\sigma^m(\bar{p}_2))\neq 1$ for some $m\in\mathbb{Z}$), we see that in any case the reduced form of $\frac{u}{\bar{u}}$ is similarly given by \[(-1)^n\frac{\tilde{p}_1}{\tilde{p}_2}\] for some $\tilde{p}_1,\tilde{p}_2\in\bar{\mathbb{Q}}[x_2]$ monic and such that $\mathrm{gcd}(x_2,\tilde{p}_1)=\mathrm{gcd}(x_2,\tilde{p}_2)=\mathrm{gcd}(\tilde{p}_1,\sigma^m(\tilde{p}_2))=1$ for every $m\in\mathbb{Z}$. But then we see that if, say, $\tilde{p}_1\neq 1$, then there exists $\beta\in\bar{\mathbb{Q}}^\times$ such that $\tilde{p}_1(\beta)=0$, and we have that \[q_2\text{-}\mathrm{dres}\left(\frac{\delta_2(\tilde{p}_1)}{\tilde{p}_1},[\beta]_{q_2},1\right)\neq 0= q_2\text{-}\mathrm{dres}\left(\frac{\delta_2(\tilde{p}_2)}{\tilde{p}_2},[\beta]_{q_2},1\right),\] and similarly if we assume instead that $\tilde{p}_2\neq 1$. Therefore, if either $\tilde{p}_1\neq 1$ or $\tilde{p}_2\neq 1$, it is impossible to have $(1,-1)\in M$. Or in other words, if $(1,-1)\in M$ then $\frac{u}{\bar{u}}=(-1)^n\frac{\sigma(\tilde{f})}{\tilde{f}}$ for some $\tilde{f}\in\bar{\mathbb{Q}}(x_2)^\times$. But in this case we then see that $n$ must be odd, for otherwise we would have that $\alpha_1=\alpha_2$ for every $\left(\begin{smallmatrix}\alpha_1 & 0 \\ 0 & \alpha_2\end{smallmatrix}\right)\in G_2$, and since $G_2$ is Zariski-dense in $H_2$ the same relation would be satisfied by every diagonal matrix in $H_1$, but this does not occur for any of the possibilities for $H_1$ listed in Lemma~\ref{imprimitive-classification}. This conlcudes the proof that $(1,-1)\in M$ if and only if $H_1=\{\pm\}\ltimes\mathbb{G}_m(C)$ as in Lemma~\ref{imprimitive-classification}(3).

In case we do have $(1,-1)\in M$, we must decide whether $M=\mathbb{Z}\cdot (1,-1)$ or $M=\mathbb{Z}^2$. We have that $M=\mathbb{Z}\cdot (1,-1)$ if and only if $G_2=H_2$, which implies that $G_1=H_1$. On the other hand, we have $M=\mathbb{Z}^2$ if and only if $(1,1)\in M$ also, i.e., \eqref{imprimitive-symmetry} is satisfied with $m_1=1=m_2$. But then after adding those two equations together we see that there exists $f\in\bar{\mathbb{Q}}(x)$ (not just in $\bar{\mathbb{Q}}(x_2)$), such that \[\frac{\delta(b)}{b}=\frac{\delta(u)}{u}+\frac{\delta(\bar{u})}{\bar{u}}+\sigma\left(\frac{\delta(w)}{w}\right)-\frac{\delta(w)}{w}=\sigma(f)-f+2c.\] Indeed, writing $u-\bar{u}=x_2w$ with $w\in\bar{\mathbb{Q}}(x)^\times$ and $f:=\frac{1}{2}(g+\bar{g})+\frac{\delta(w)}{w}\in\bar{\mathbb{Q}}(x)$, where $g\in\bar{\mathbb{Q}}(x_2)$ and $c\in\mathbb{Z}$ are as in \eqref{imprimitive-symmetry}, the above equation results from comparing determinants in $\sigma(T)AT^{-1}=\left(\begin{smallmatrix}u & 0 \\ 0 & \bar{u}\end{smallmatrix}\right)$ with $T$ as in \eqref{imprimitive-1-gauge}. Furthermore, in this case we must have $c\neq 0$, for otherwise we would have that $G_1\subseteq\mathrm{GL}_2(C^\delta)$ is differentially constant, which by \cite[Thm.~3.7(ii)]{arreche-singer:2016} would imply that $G_1$ is commutative. But this is impossible, since $G_1$ is Zariski-dense in $H_1$ by Proposition~\ref{dense}, so $H_1$ would have to be commutative also, yielding a contradiction. Thus, $G_2$ is a proper subgroup of $H_2$ if and only if \[G_2=\left\{\begin{pmatrix} \alpha & 0 \\ 0 & \alpha\end{pmatrix}, \ \begin{pmatrix} \alpha & 0 \\ 0 & -\alpha\end{pmatrix} \ \middle| \ \alpha\in C^\times \ \text{with} \ \delta\left(\frac{\delta(\alpha)}{\alpha}\right)=0\right\}.\] Since for any $\left(\begin{smallmatrix} 0 & \pm\lambda \\ \lambda & 0\end{smallmatrix}\right)\in G_1$ we have that $\lambda^2\left(\begin{smallmatrix} 1 & 0 \\ 0 & 1\end{smallmatrix}\right)\in G_2$, we see that $\delta\left(\frac{\delta(\lambda)}{\lambda}\right)=0$ also, concluding the proof of item~(2).

It remains to show that the statements in items~(1) and (3) are correct when $M=\mathbb{Z}\cdot (1,1)$. If $H_1=D_m^-$ or $H_1=D_m^+$, then $G_2=H_2$ and therefore $G_1=H_1$. This establishes item~(1). Finally, supposing $H_1=\{\pm 1\}\ltimes\mathbb{G}_m(C)^2$ and $M=\mathbb{Z}\cdot (1,1)$, the arguments above and in Remark~\ref{diagonalizable-rem} show that this occurs if and only if $\frac{\delta(b)}{b}=\sigma(f)-f+2c$, if and only if $\mathrm{det}(G_2)\subseteq\left\{\alpha\in C^\times \ \middle| \ \delta\left(\frac{\delta(\alpha)}{\alpha}\right)\right\},$ with equality if and only if $c\neq 0$, and moreover $c=0$ if and only if $\mathrm{det}(G_2)=\{\alpha\in C^\times \ | \ \delta(\alpha)=0\}$. Since $G_2$ has index $2$ in $G_1$, $\mathrm{det}(G_2)$ has index at most $2$ in $\mathrm{det}(G_1)$; but since $\mathrm{det}(G_2)$ is divisible in either case, we see that $\mathrm{det}(G_2)=\mathrm{det}(G_1)$, concluding the proof of item~(3). \end{proof}

\subsection{Irreducible and imprimitive (2): rational system of imprimitivity}\label{imprimitive-sec:2}

Supposing there are no solutions to \eqref{ric1} in $\bar{\mathbb{Q}}(x_2)$, and either $a=0$ or there exists a solution $e\in\bar{\mathbb{Q}}(x)$ to \eqref{ric2}, we proceed as follows. The non-existence of solutions to \eqref{ric1} in $k_2$ implies there are no solutions in $k_\infty$ either, which in turn implies that the $\sigma$-Galois group $H_\infty$ for \eqref{mateq} over $k_\infty$ is irreducible, and since $H_\infty\subseteq H_1$, the $\sigma$-Galois group for \eqref{mateq} over $k_1$, we then have that $H_1$ must be irreducible also.

The system \eqref{mateq} in this case is equivalent to \begin{equation}\label{std-imprimitive-eq}\sigma(Y)=\begin{pmatrix} 0 & 1 \\ -r & 0\end{pmatrix} Y,\end{equation} for some $r\in\bar{\mathbb{Q}}(x)$ as we saw in \S\ref{hendriks-sec}, which implies that $H_\infty$ is imprimitive. Since $H_\infty$ has finite index in $H_1$, the classification of algebraic subgroups of $\mathrm{GL}_2(C)$ from \cite{gl2-classification} then implies that $H_1$ must also be imprimitive, and therefore $H_1$ must be one of the irreducible imprimitive subgroups of $\mathrm{GL}_2(C)$ with bicyclic group of connected components listed in Lemma~\ref{imprimitive-classification}.

\begin{rem}\label{imprimitive-2-h-rem}In this case, we can compute the $\sigma$-Galois group $H_1$ for \eqref{std-imprimitive-eq} over $k_1$ from among the possibilities listed in Lemma~\ref{imprimitive-classification} with the aid of Remark~\ref{imprimitive-diag-det-rem} by computing the diagonal subgroup $\Delta(H_1)$ and the image of the determinant $\mathrm{det}(H_1)$ as follows. As before, $\mathrm{det}(H_1)=\mu_m$, the group of $m$-th roots of unity, if and only if $m$ is the smallest positive integer such that $b^m=\frac{\sigma(f)}{f}$ for some $f\in\bar{\mathbb{Q}}(x)^\times$; if there is no such $m$, then $\mathrm{det}(H_1)=\mathbb{G}_m(C)$. On the other hand, $\Delta(H_1)$ is precisely the $\sigma^2$-Galois group for the system \begin{equation}\label{imprimitive-2-diag-system} \sigma^2(Z)=\begin{pmatrix}-r & 0 \\ 0 & -\sigma(r)\end{pmatrix}Z\end{equation} over $k_1$, which we can compute as in Proposition~\ref{galdiag} and Remark~\ref{diagonalizable-rem} by considering $k_1$ as a $\sigma^2$-field. We see that \[\Delta(H_1)\subseteq \left\{\begin{pmatrix}\alpha_1 & 0 \\ 0 & \alpha_2\end{pmatrix} \ \middle| \ \alpha_1,\alpha_2\in C^\times\right\}\] is the subgroup defined by the following conditions on $\alpha_1$ and $\alpha_2$:
\begin{enumerate}
\item $(\alpha_1\alpha_2)^m=1$ if and only if $m$ is the smallest positive integer such that $(r\sigma(r))^m=\frac{\sigma^2(f)}{f}$ for some $f\in\bar{\mathbb{Q}}(x)^\times$;
\item otherwise $\Delta(H_1)=\mathbb{G}_m(C)^2$.
\end{enumerate}
The omission of the possibility that $H_1=\{\pm 1\}^2\ltimes \mathbb{G}_m(C)$ as in Lemma~\ref{imprimitive-classification}(3) is deliberate. This is impossible under the present assumptions because $\alpha_1^2=\alpha_2^2$ for every $\left(\begin{smallmatrix}\alpha_1 & 0 \\ 0 & \alpha_2\end{smallmatrix}\right)\in\Delta(H_1)$ if and only if $\left(\frac{\sigma(r)}{r}\right)^2=\frac{\sigma^2(f)}{f}$ for some $f\in\bar{\mathbb{Q}}(x)^\times$. But if we let $v\in\bar{\mathbb{Q}}(x)^\times$ such that $r\frac{\sigma(v)}{v}=ex^n\frac{p_1}{p_2}$ is \emph{reduced}, with $e\in\bar{\mathbb{Q}}^\times$ such that $e\in q^\mathbb{Z}$ if and only if $e=1$, $n\in\mathbb{Z}$, and $p_1,p_2\in\bar{\mathbb{Q}}[x]$ monic such that $\mathrm{gcd}(x,p_1)=\mathrm{gcd}(x,p_2)=\mathrm{gcd}(p_1,\sigma^m(p_2))$ for every $m\in\mathbb{Z}$, we would then have that the reduced form of $\sigma(r)$ is exactly $\sigma(r)\frac{\sigma^2(v)}{\sigma(v)}=eq^nx^n\frac{\sigma(p_1)}{\sigma(p_2)}$, and therefore \[\frac{\sigma(r)}{r}\frac{\sigma^2(v)}{v}=q^n\frac{\sigma(p_1)p_2}{p_1\sigma(p_2)}.\] This element is not necessarily reduced with respect to $\sigma^2$, but the reduced form of $\frac{\sigma(r)}{r}$ with respect to $\sigma^2$ is given by \[q^\varepsilon\frac{\tilde{p}_1}{\tilde{p}_2},\] where $\varepsilon=0$ if $n$ is even and $\varepsilon=1$ if $n$ is odd, and $\tilde{p}_1,\tilde{p}_2\in\bar{\mathbb{Q}}[x]$ are again monic such that $\mathrm{gcd}(x,\tilde{p}_1)=\mathrm{gcd}(x,\tilde{p}_2)=\mathrm{gcd}(\tilde{p}_1,\sigma^{2m}(\tilde{p}_2))=1$ for every $m\in\mathbb{Z}$. We then have that the reduced form of $\frac{\sigma(r)}{r}$ with respect to $\sigma^2$ is $\left(\frac{\tilde{p}_1}{\tilde{p}_2}\right)^2$, and therefore $\left(\frac{\sigma(r)}{r}\right)^2=\frac{\sigma^2(f)}{f}$ for some $f\in\bar{\mathbb{Q}}(x)^\times$ if and only if $\tilde{p}_1=1=\tilde{p}_2$, but this would imply that $\frac{\sigma(r)}{r}=\frac{\sigma^2(\tilde{f})}{\tilde{f}}$ for some $\tilde{f}\in\bar{\mathbb{Q}}(x)^\times$ already, which in turn would imply that $\alpha_1=\alpha_2$ for every $\left(\begin{smallmatrix}\alpha_1 & 0 \\ 0 & \alpha_2\end{smallmatrix}\right)\in\Delta(H_1)$, which is not possible according to the classification of Lemma~\ref{imprimitive-classification}.

In fact, we may pursue this further to conclude that it is also impossible to have \begin{equation}\label{imprimitive-2-klein}\frac{\delta(\sigma(r))}{\sigma(r)}-\frac{\delta(r)}{r}=\sigma^2(g)-g+c\end{equation} for some $g\in\bar{\mathbb{Q}}(x)$ and $c\in\mathbb{Z}$. This is because if, say, $\tilde{p}_1\neq 1$, then there would exist $\beta\in\bar{\mathbb{Q}}^\times$ such that $\tilde{p}_1(\beta)=0$, and then we would have that \[q^2\text{-}\mathrm{dres}\left(\frac{\delta(\tilde{p}_1)}{\tilde{p}_1},[\beta]_{q^2},1\right)\neq 0 = q^2\text{-}\mathrm{dres}\left(\frac{\delta(\tilde{p}_2)}{\tilde{p}_2},[\beta]_{q^2},1\right),\] and similarly with the roles of $\tilde{p}_1$ and $\tilde{p}_2$ exchanged. But since \[\frac{\delta(\sigma(r))}{\sigma(r)}-\frac{\delta(r)}{r}=\frac{\delta(\tilde{p}_1)}{\tilde{p}_1}-\frac{\delta(\tilde{p}_2)}{\tilde{p}_2}\] modulo $(\sigma^2-1)(\bar{\mathbb{Q}}(x))$, we see that \eqref{imprimitive-2-klein} is impossible unless $\tilde{p}_1=1=\tilde{p}_2$, which we already ruled out above.
\end{rem}

Having computed the $\sigma$-Galois group $H_1$ for \eqref{mateq} over $k_1$ as above, we can now compute the $\sigma\delta$-Galois group $G_1$ for \eqref{mateq} over $k_1$ with the following result.

\begin{prop}\label{imprimitive-prop-2} Suppose there are no solutions to \eqref{ric1} in $\bar{\mathbb{Q}}(x_2)$, and either $a=0$ or there exists a solution to \eqref{ric2} in $\bar{\mathbb{Q}}(x)$. Then $H_1\neq\{\pm1\}^2\ltimes\mathbb{G}_m(C)$ as in Lemma~\ref{imprimitive-classification}(3), and $G_1$ is the subgroup of \begin{equation}\label{std-imprimitive-gp} \{\pm1\}\ltimes\mathbb{G}_m(C)^2=\left\{\begin{pmatrix}\alpha _1& 0 \\ 0 & \alpha_2\end{pmatrix}, \begin{pmatrix}0 & \lambda_1 \\ \lambda_2 & 0 \end{pmatrix} \ \middle| \ \alpha_1\alpha_2\neq 0, \ \lambda_1\lambda_2\neq 0\right\}\end{equation} defined by the following conditions on $\alpha_1,\alpha_2,\lambda_1$, and $\lambda_2$.
\begin{enumerate} 
\item If $H_1=D_m^-$ as in Lemma~\ref{imprimitive-classification}(1) or $H_1=D_m^+$ as in Lemma~\ref{imprimitive-classification}(2), then $G_1=H_1$.
\item If $H_1=\{\pm1\}\ltimes\mathbb{G}_m(C)^2$ as in Lemma~\ref{imprimitive-classification}(4), then:
\begin{enumerate}
\item there exist $c\in \mathbb{Z}$ and $g\in\bar{\mathbb{Q}}(x)$ such that $\frac{\delta(b)}{b}=\sigma(g)-g+c$ if and only if $\delta\left(\frac{\delta(\alpha_1)}{\alpha_1}+\frac{\delta(\alpha_2)}{\alpha_2}\right)=0=\delta\left(\frac{\delta(\lambda_1)}{\lambda_1}+\frac{\delta(\lambda_2)}{\lambda_2}\right)$; moreover, $c=0$ if and only if $\delta(\alpha_1\alpha_2)=0=\delta(\lambda_1\lambda_2)$;
\item otherwise, $G_1=H_1$.
\end{enumerate}
\end{enumerate}\end{prop}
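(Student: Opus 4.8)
The plan is to mirror the strategy of Proposition~\ref{imprimitive-prop-1}, replacing the passage to the quadratic extension $k_2$ by the passage from $\sigma$ to $\sigma^2$: over $k_1$ the standard form \eqref{std-imprimitive-eq} squares to the diagonal system \eqref{imprimitive-2-diag-system}, so the subgroup $\Delta(G_1)$ of diagonal matrices in $G_1$ is the $\sigma^2\delta$-Galois group of \eqref{imprimitive-2-diag-system} over $k_1$, exactly as $\Delta(H_1)$ was identified with the corresponding $\sigma^2$-Galois group in Remark~\ref{imprimitive-2-h-rem}. First I would record the reductions already carried out there: $H_1$ is irreducible and imprimitive, the possibility $H_1=\{\pm1\}^2\ltimes\mathbb{G}_m(C)$ is excluded, and---crucially---the relation \eqref{imprimitive-2-klein} has no solution with $g\in\bar{\mathbb{Q}}(x)$ and $c\in\mathbb{Z}$.

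Next I would apply Proposition~\ref{galdiag} and Remark~\ref{diagonalizable-rem} to \eqref{imprimitive-2-diag-system}, viewing $k_1$ as a $\sigma^2\delta$-field (so $q$ is replaced by $q^2$ and $q$-discrete residues by $q^2$-discrete residues), with $u_1=-r$ and $u_2=-\sigma(r)$. This produces the saturated module $M\subseteq\mathbb{Z}^2$ of pairs $(m_1,m_2)$ for which $m_1\tfrac{\delta(r)}{r}+m_2\tfrac{\delta(\sigma(r))}{\sigma(r)}=\sigma^2(g)-g+c$ is solvable. Applying $\sigma$ to such a relation and using $\tfrac{\delta(\sigma^2 r)}{\sigma^2 r}\equiv\tfrac{\delta(r)}{r}$ modulo $\mathrm{im}(\sigma^2-1)$ shows that $M$ is invariant under interchanging the two coordinates (with the constant $c$ preserved). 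Combined with the saturation of $M$ established in Remark~\ref{diagonalizable-rem} (so that $\mathbb{Z}^2/M$ is torsion-free) and the fact that \eqref{imprimitive-2-klein} forbids $(1,-1)\in M$, this forces $M\in\{\{0\},\ \mathbb{Z}\cdot(1,1)\}$; in particular $M=\mathbb{Z}^2$ cannot occur, which is the structural reason the Klein four-group case is absent here.

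The heart of the argument is translating the symmetric relation $(1,1)\in M$ into the condition on $\tfrac{\delta(b)}{b}$ in the statement. Writing $t=\mathrm{det}(T)\in\bar{\mathbb{Q}}(x)^\times$ for the determinant of the gauge transformation from \S\ref{hendriks-sec}, so that $r=\tfrac{\sigma(t)}{t}\,b$, a direct computation gives $\tfrac{\delta(r)}{r}+\tfrac{\delta(\sigma(r))}{\sigma(r)}=(\sigma^2-1)\tfrac{\delta(t)}{t}+(1+\sigma)\tfrac{\delta(b)}{b}$. Hence $(1,1)\in M$ is equivalent to $(1+\sigma)\tfrac{\delta(b)}{b}\in\mathrm{im}(\sigma^2-1)+\mathbb{Z}$, and I would show this is in turn equivalent to $\tfrac{\delta(b)}{b}=\sigma(g)-g+c$ with $c\in\mathbb{Z}$: the forward implication is immediate, and for the converse I would set $P:=\tfrac{\delta(b)}{b}-(\sigma-1)\tilde{g}$ and observe that $(1+\sigma)P\in\mathbb{Z}$ forces $\sigma^2(P)=P$, whence $P\in k^{\sigma^2}\cap\bar{\mathbb{Q}}(x)=\bar{\mathbb{Q}}$; comparing $q$-discrete residues at infinity then gives $P=q\text{-}\mathrm{dres}(\tfrac{\delta(b)}{b},\infty)\in\mathbb{Z}$. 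Tracking constants shows the $\sigma^2$-constant of the $M$-relation equals $2c$, so it vanishes exactly when $c=0$.

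Finally I would assemble the cases. When $H_1=D_m^-$ or $D_m^+$, the multiplicative relation $(r\sigma(r))^m=\tfrac{\sigma^2(f)}{f}$ from Remark~\ref{imprimitive-2-h-rem} gives $(m,m)\in M$ with vanishing constant, so $M=\mathbb{Z}\cdot(1,1)$ and the only differential relation it contributes, $\delta\bigl(\tfrac{\delta(\alpha_1\alpha_2)}{\alpha_1\alpha_2}\bigr)=0$, is already implied by $(\alpha_1\alpha_2)^m=1$; thus $\Delta(G_1)=\Delta(H_1)$, and since $G_1$ is Zariski-dense in $H_1$ by Proposition~\ref{dense} and therefore meets the antidiagonal coset, $G_1=H_1$, giving item~(1). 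When $H_1=\{\pm1\}\ltimes\mathbb{G}_m(C)^2$, so $\Delta(H_1)=\mathbb{G}_m(C)^2$, either $M=\{0\}$ and $G_1=H_1$ as in item~(2b), or $M=\mathbb{Z}\cdot(1,1)$, in which case $\Delta(G_1)$ is cut out by $\delta\bigl(\tfrac{\delta(\alpha_1)}{\alpha_1}+\tfrac{\delta(\alpha_2)}{\alpha_2}\bigr)=0$ (and by $\delta(\alpha_1\alpha_2)=0$ precisely when $c=0$). I would transport these to the antidiagonal coset using that the square of any antidiagonal element of $G_1$ is the scalar matrix $\lambda_1\lambda_2\cdot I\in\Delta(G_1)$, yielding the stated conditions on $\lambda_1,\lambda_2$; fixing an antidiagonal $\tau\in G_1$, the coset description $G_1=\Delta(G_1)\sqcup\Delta(G_1)\tau$ then shows these equations define $G_1$, completing item~(2a). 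The main obstacle is the third paragraph: getting the constant bookkeeping exactly right---both the passage between the $\sigma^2$-constant $2c$ and the integer $c$, and the identification $P=q\text{-}\mathrm{dres}(\tfrac{\delta(b)}{b},\infty)$---so that the ``$c\in\mathbb{Z}$'' and ``$c=0$'' dichotomies in the statement emerge correctly.
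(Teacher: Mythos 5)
Your proposal is correct and follows the same skeleton as the paper's proof: identify $\Delta(G_1)$ with the $\sigma^2\delta$-Galois group of \eqref{imprimitive-2-diag-system} over $k_1$, use the swap symmetry of the module $M$ (apply $\sigma$ and absorb $(\sigma^2-1)$-terms, preserving the constant) together with the impossibility of \eqref{imprimitive-2-klein} from Remark~\ref{imprimitive-2-h-rem} to force $M\in\{\{0\},\ \mathbb{Z}\cdot(1,1)\}$, and then case out according to $H_1$. You deviate in two local steps, both soundly. First, the paper leaves the passage between the $\sigma^2$-relation for $\frac{\delta(r)}{r}+\frac{\delta(\sigma(r))}{\sigma(r)}$ and the $\sigma$-relation for $\frac{\delta(b)}{b}$ largely implicit (relying on the determinant comparison already made in the proof of Proposition~\ref{imprimitive-prop-1}); your explicit computation via $r=\frac{\sigma(t)}{t}b$ with $t=\det T\in\bar{\mathbb{Q}}(x)^\times$, and the converse argument that $(1+\sigma)P$ constant forces $\sigma^2(P)=P$, hence $P\in k^{\sigma^2}\cap\bar{\mathbb{Q}}(x)=\bar{\mathbb{Q}}$, and then $P=q\text{-}\mathrm{dres}\bigl(\frac{\delta(b)}{b},\infty\bigr)\in\mathbb{Z}$ by Proposition~\ref{indsum}, supplies the missing direction cleanly (and shows as a byproduct that the $\sigma^2$-constant, which your bookkeeping correctly identifies as $2c$, must be even). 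Second, to propagate the conditions to the antidiagonal coset the paper argues via the determinant map: $\mathrm{det}(\Delta(G_1))$ is divisible and $\Delta(G_1)$ has finite index in $G_1$, so $\mathrm{det}(G_1)=\mathrm{det}(\Delta(G_1))$; you instead use that the square of any antidiagonal element of $G_1$ is the scalar $\lambda_1\lambda_2$ lying in $\Delta(G_1)$, together with the coset decomposition $G_1=\Delta(G_1)\sqcup\Delta(G_1)\tau$, which yields both inclusions directly (the reverse inclusion works because the defining conditions are multiplicative, $a\mapsto\delta\bigl(\frac{\delta(a)}{a}\bigr)$ being additive on products). The two mechanisms are interchangeable here, and your version has the mild advantage of making explicit two equivalences the paper compresses into ``the arguments above.''
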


\begin{proof} The fact that $H_1\neq\{\pm1\}^2\ltimes\mathbb{G}_m(C)$ as in Lemma~\ref{imprimitive-classification}(3) under these conditions was already established in Remark~\ref{imprimitive-2-h-rem}. Let us denote by $\Delta(G_1)$ the subgroup of diagonal matrices in $G_1$, which coincides with the $\sigma^2\delta$-Galois group for \eqref{imprimitive-2-diag-system} over $k_1$. We may compute $\Delta(G_1)$ using the results of Proposition~\ref{galdiag} and Remark~\ref{diagonalizable-rem}. We again denote by $M\subseteq\mathbb{Z}^2$ the submodule generated by $(m_1,m_2)\in\mathbb{Z}^2$, not both zero and with $\mathrm{gcd}(m_1,m_2)=1$, such that there exist $c\in\mathbb{Z}$ and $g\in\bar{\mathbb{Q}}(x)$ such that \begin{equation}\label{imprimitive-2-symmetry}m_1\frac{\delta(\sigma(r))}{\sigma(r)}+m_2\frac{\delta(r)}{r}=\sigma^2(g)-g+c,\end{equation} which is equivalent to \[m_2\frac{\delta(\sigma(r))}{\sigma(r)}+m_1\frac{\delta(r)}{r}=\sigma^2\left(\sigma(g)-m_1\frac{\delta(r)}{r}\right)-\left(\sigma(g)-m_1\frac{\delta(r)}{r}\right)+c.\] As we saw in Remark~\ref{diagonalizable-rem}, either $M=\{0\}$ is trivial; or $M=\mathbb{Z}\cdot(m_1,m_2)$; or $M=\mathbb{Z}^2$. But it follows from the above computation that if $M$ is not trivial, then at least one of $(1,1)$ or $(1,-1$ belongs to $M$. But we saw in Remark~\ref{imprimitive-2-h-rem} that we cannot have $(1,-1)\in M$, since the relation~\ref{imprimitive-2-klein} is impossible. The only possibilities that remain are therefore $M=\{0\}$ or $M=\mathbb{Z}\cdot (1,1)$.

If $M=\{0\}$, then $\Delta(G_1)=\mathbb{G}_m(C)^2$, and therefore $G_1=H_1=\{\pm1\}\ltimes\mathbb{G}_m(C)^2$. Let us now suppose that $M=\mathbb{Z}\cdot(1,1)$. Then if $H_1=D_m^-$ as in Lemma~\ref{imprimitive-classification}(1) or $H_1=D_m^+$ as in Lemma~\ref{imprimitive-classification}(2), then $\Delta(G_1)=\Delta(H_1)$, which implies that $G_1=H_1$, as claimed in item~(1). It remains to establish item~(2) under the assumption that $M=\mathbb{Z}\cdot(1,1)$. But here we again have that $\mathrm{det}(\Delta(G_1))\subseteq\left\{\alpha\in C^\times \ \middle| \  \delta\left(\frac{\delta(\alpha)}{\alpha}\right)\right\}$ with equality if and only if $c\neq 0$ in \eqref{imprimitive-2-symmetry}, and moreover this $c=0$ if and only if $\mathrm{det}(\Delta(G_1))=\{\alpha\in C^\times \ | \ \delta(\alpha)=0\}$. Since $\Delta(G_1)$ has finite index in $G_1$ and $\mathrm{det}(\Delta(G_1))$ is divisible in either case, we obtain that $\mathrm{det}(G_1)=\mathrm{det}(\Delta(G_1))$, which concludes the proof of item~(2).
\end{proof}

\subsection{Irreducible and imprimitive (3): quadratic system of imprimitivity}\label{imprimitive-sec:3}

Supposing there are no solutions to \eqref{ric1} in $\bar{\mathbb{Q}}(x_2)$, $a\neq 0$, and there are no solutions to \eqref{ric2} in $\bar{\mathbb{Q}}(x)$, let us now assume that there is a solution $e\in \bar{\mathbb{Q}}(x_2)$ to \eqref{ric2}, and therefore the Galois conjugate $\bar{e}$ of $e$ over $\bar{\mathbb{Q}}(x)$ also satisfies \eqref{ric2}, since this Riccati equation with respect to $\sigma^2$ is defined over $\bar{\mathbb{Q}}(x)$. Here again we have that the non-existence of solutions to \eqref{ric1} in $\bar{\mathbb{Q}}(x_2)$ implies that there are no solutions to \eqref{ric1} in all of $k_\infty$, which implies that $H_\infty$ is irreducible as explained in \S\ref{hendriks-sec}. Since $H_\infty\subseteq H_2\subseteq H_1$ (which again denote the $\sigma$-Galois groups for \eqref{mateq} over $k_\infty$, $k_2$, and $k_1$, respectively), we then have that $H_2$ and $H_1$ must also be irreducible. Moreover the existence of the solution $e\in k_2$ to \eqref{ric2} implies that $H_\infty$ must be imprimitive, and since $H_\infty$ has finite index in $H_1$ and in $H_2$, the classification of the algebraic subgroups of $\mathrm{GL}_2(C)$ of \cite{gl2-classification} implies that $H_1$ and $H_2$ must be imprimitive also. By \cite[Prop.~12.2(1)]{vanderput-singer:1997}, both $H_1$ and $H_2$ must have bicyclic groups of connected components, and thus they must both be included in the list of irreducible imprimitive subgroups given in Lemma~\ref{imprimitive-classification}. By Corollary~\ref{fiber-cor-1}, $H_2\subseteq H_1$ has index either $1$ or $2$. We will show that $H_2\neq H_1$, which implies that the index of $H_2$ in $H_1$ is exactly $2$. A straightforward computation shows that the only groups listed in Lemma~\ref{imprimitive-classification} admitting another such group as an index-$2$ subgroup are $H_1=D_m^+$ with $m$ even, with $H_2$ then given by one of the groups $D_{m/2}^-$ or $D_{m/2}^+$.

To see that $H_2\neq H_1$ in this case, recall from \cite[Thm.~18]{hendriks:1997} that $e\in \bar{\mathbb{Q}}(x_2)$ satisfies \eqref{ric2} if and only if $d:=e+\frac{b}{a}$ has the property that $dy+\sigma(y)=:z_d$ satisfies $\sigma^2(z_d)+rz_d=0$ with $r:=-a\sigma(a)+\sigma(b)+a\sigma^2(d)$ if and only if $y$ satisfies \eqref{difeq}. We see that this is equivalent to $z_{\bar{d}}:=\bar{d}y+\sigma(y)$ satisfying $\sigma^2(z_{\bar{d}})+\bar{r}z_{\bar{d}}=0$, where $\bar{d}$ and $\bar{r}$ denote the Galois conjugates of $d,r\in \bar{\mathbb{Q}}(x_2)$ over $\bar{\mathbb{Q}}(x)$. Since $e\neq\bar{e}$, we also have $d\neq\bar{d}$ and $r\neq\bar{r}$. At this point, we could compute $H_2$ directly as in Remark~\ref{imprimitive-2-h-rem}, where in particular the subgroup of diagonal matrices $\Delta(H_2)$ in $H_2$ corresponds to the $\sigma^2$-Galois group for the system \begin{equation}\label{imprimitive-3-diag-system}\sigma^2(Y^{(2)})=A^{(2)}Y^{(2)}\end{equation} over $k_2$, where $A^{(2)}:=\sigma(A)A$. A computation shows that setting \[T:=\begin{pmatrix}d & 1 \\ \bar{d} & 1\end{pmatrix}\in\mathrm{GL}_2(\bar{\mathbb{Q}}(x_2))\] we have that \[\sigma^2(T)A^{(2)}T^{-1}=\begin{pmatrix} -r & 0 \\ 0 & -\bar{r}\end{pmatrix},\] and therefore \eqref{imprimitive-3-diag-system} is equivalent over $k_2$ to the system \begin{equation}\label{imprimitive-3-diag-system-z} \sigma^2(Z^{(2)})=\begin{pmatrix}-r & 0 \\ 0 & -\bar{r}\end{pmatrix} Z^{(2)}\end{equation} via the gauge transformation $Z^{(2)}=TY^{(2)}$.

If, contrary to our contention, we did have that $H_1=H_2$, then the $\sigma^2$-Galois group $H_1^{(2)}$ for the system \eqref{imprimitive-3-diag-system} over $k_1$ would coincide with $\Delta(H_2)$, and in particular we would have $H_1^{(2)}=\Delta(H_1)=\Delta(H_2)$ being diagonal. We will show that this is not the case. For this, consider the system \begin{equation}\label{imprimitive-3-big}\sigma^2(W)=\begin{pmatrix}A^{(2)} & 0 \\ 0 & q\end{pmatrix}W, \quad\text{with fundamental solution matrix}\quad W=\begin{pmatrix} Y^{(2)} & 0 \\ 0 & x_2\end{pmatrix},\end{equation} where $Y^{(2)}$ in turn denotes a $(2\times 2)$ fundamental solution matrix for \eqref{imprimitive-3-diag-system} over $k_1$. Let $\tilde{H}^{(2)}$ denote the $\sigma^2$-Galois group for the system \eqref{imprimitive-3-big} over $k_1$. Let $\tilde{\tau}\in\tilde{H}^{(2)}$ such that $\tilde{\tau}(x_2)=-x_2$, and let $\tau:=\tilde{\tau}|_{S_1}\in H_1^{(2)}$ denote the restriction of $\tilde{\tau}$ to the $\sigma^2$-PV ring  corresponding to the system \eqref{imprimitive-3-diag-system}: $S_1^{(2)}:=k_1[Y^{(2)},\mathrm{det}(Y^{(2)})^{-1}]$. Let $M_\tau\in\mathrm{GL}_2(C)$ denote the matrix correspondiong to $\tau\in H_1^{(2)}$, so that $\tau(Y^{(2)})=Y^{(2)}M_\tau$. Since the system \eqref{imprimitive-3-diag-system-z} is diagonal, we have that \[TY^{(2)}=Z^{(2)}=\begin{pmatrix} z_1 & 0 \\ 0 & z_2\end{pmatrix},\] where $\sigma^2(z_1)=-rz_1$ and $\sigma^2(z_2)=-\bar{r}z_2$. But then we see that \[\tilde{\tau}(Z^{(2)})=\tilde{\tau}(TY^{(2)})=\bar{T}Y^{(2)}M_\tau=\begin{pmatrix}0 & 1 \\ 1 & 0 \end{pmatrix}Z^{(2)}M_\tau.\] On the other hand, $\sigma^2(\tilde{\tau}(z_1))=\tilde{\tau}(\sigma^2(z_1))=\tilde{\tau}(-rz_1)=-\bar{r}\tilde{\tau}(z_1)$, and therefore $\tilde{\tau}(z_1)=\lambda_2z_2$ for some $\lambda_2\in C^\times$. Similarly we see that $\tilde{\tau}(z_2)=\lambda_1z_1$ for some $\lambda_1\in C^\times$, and therefore \[\tilde{\tau}(Z^{(2)})=\begin{pmatrix}0 & 1 \\ 1 & 0 \end{pmatrix}Z^{(2)}\begin{pmatrix} 0 & \lambda_1 \\ \lambda_2 & 0 \end{pmatrix}.\] This shows that $M_\tau=\left(\begin{smallmatrix} 0 & \lambda_1\\ \lambda_2 & 0 \end{smallmatrix}\right)$, as we wanted to show.

\begin{prop}\label{imprimitive-prop-3} Suppose there are no solutions to \eqref{ric1} in $\bar{\mathbb{Q}}(x_2)$, $a\neq 0$, and there are no solutions to \eqref{ric2} in $\bar{\mathbb{Q}}(x)$ but there exists a solution to \eqref{ric2} in $\bar{\mathbb{Q}}(x_2)$. Then $G_1=H_1=D_m^+$ for the smallest even positive integer $m\in 2\mathbb{N}$ such that $b^m=\frac{\sigma(f)}{f}$ for some $f\in\bar{\mathbb{Q}}(x)^\times$.\end{prop}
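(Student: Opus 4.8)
The plan is to leverage the structural facts already established in the discussion preceding the statement: that $H_1=D_m^+$ for some even $m$, that $[H_1:H_2]=2$, and that $H_2$ is one of $D_{m/2}^-$ or $D_{m/2}^+$. Granting these, I would first pin down $m$ using the determinant. Since $\det(A)=b$, the image $\det(H_1)\subseteq\mathbb{G}_m(C)$ is exactly the $\sigma$-Galois group of $\sigma(y)=by$ over $k_1$, which is $\mu_{m_0}$ for the least positive integer $m_0$ with $b^{m_0}=\tfrac{\sigma(f)}{f}$ for some $f\in\bar{\mathbb{Q}}(x)^\times$ (and is $\mathbb{G}_m(C)$ if no such $m_0$ exists). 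On the other hand, Remark~\ref{imprimitive-diag-det-rem} shows that a group $D_m^+$ with $m$ even has $\det(D_m^+)=\mu_m$. Comparing gives $m_0=m$; in particular the least positive integer for which $b^{m_0}=\sigma(f)/f$ is already even, so it coincides with the least even such integer, as claimed.

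It then remains to prove $G_1=H_1$, and here I would reduce to the identity component. The torus $H_1^\circ=\{\mathrm{diag}(\alpha,\alpha^{-1})\mid\alpha\in C^\times\}\cong\mathbb{G}_m(C)$ is one–dimensional and sits inside the diagonal part, and $G_1$ is Zariski–dense in $H_1$ by Proposition~\ref{dense}. If I can show $G_1\supseteq H_1^\circ$, then density forces $G_1$ to meet every connected component of $H_1$, whence $G_1=H_1$. Now $G_1\cap H_1^\circ$ has finite index in $G_1$ and is therefore Zariski–dense in $H_1^\circ$, so by Proposition~\ref{classification} it can be a proper subgroup of $H_1^\circ$ only if it is cut out by a nontrivial linear $\delta$-polynomial; this happens precisely when the ratio $z_1/z_2$ of the two solutions of the diagonalized system \eqref{imprimitive-3-diag-system-z} is $\delta$-dependent over $k_2$. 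Since $\sigma^2(z_1/z_2)=\tfrac{r}{\bar r}(z_1/z_2)$, Corollary~\ref{diffprod} (applied with $\sigma^2$ in place of $\sigma$) translates this into the existence of $g\in\bar{\mathbb{Q}}(x_2)$ and $c\in\mathbb{Z}$ with $\tfrac{\delta(r)}{r}-\tfrac{\delta(\bar r)}{\bar r}=\sigma^2(g)-g+c$; equivalently, in the language of Remark~\ref{diagonalizable-rem} applied to the pair $(-r,-\bar r)$, the relation $(1,-1)\in M$. Thus everything comes down to excluding this single relation.

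The hard part is exactly this exclusion, and I would carry it out by the same reduced-form and discrete-residue analysis used in Remark~\ref{imprimitive-2-h-rem} and in the proof of Proposition~\ref{imprimitive-prop-1}, now exploiting that $\bar r$ is the Galois conjugate of $r$ under $x_2\mapsto -x_2$. Writing the $\sigma^2$-reduced form of $r$ as $\zeta x_2^n\tfrac{p_1}{p_2}$, its conjugate $\bar r$ reduces to $\zeta(-1)^n x_2^n\tfrac{\bar p_1}{\bar p_2}$, so the $x_2$-power and the constant $\zeta$ cancel and the reduced form of $r/\bar r$ is $(-1)^n\tfrac{\tilde p_1}{\tilde p_2}$ with $\tilde p_1,\tilde p_2\in\bar{\mathbb{Q}}[x_2]$ monic and coprime along all $\sigma^{2\ell}$-shifts. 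If either $\tilde p_i\neq 1$, then some finite orbit $[\beta]_{q^2}$ gives $q^2\text{-}\mathrm{dres}\bigl(\tfrac{\delta(r/\bar r)}{r/\bar r},[\beta]_{q^2},1\bigr)\neq 0$, contradicting Proposition~\ref{indsum} applied to the supposed relation; hence $(1,-1)\in M$ would force $r/\bar r=(-1)^n\tfrac{\sigma^2(h)}{h}$ for some $h\in\bar{\mathbb{Q}}(x_2)^\times$. But then $z_1/z_2$ (if $n$ is even) or $(z_1/z_2)^2$ (if $n$ is odd), after multiplication by a suitable element of $\bar{\mathbb{Q}}(x_2)^\times$, is fixed by $\sigma^2$ and hence lies in $k_2$ by Theorem~\ref{correspondence}, imposing $\alpha_1=\alpha_2$ or $\alpha_1^2=\alpha_2^2$ on every $\mathrm{diag}(\alpha_1,\alpha_2)\in H_2$. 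Either relation says $\Delta(H_2)$ is of Klein type, contradicting $H_2=D_{m/2}^\pm$, whose diagonal part $\{(\alpha_1\alpha_2)^{m/2}=1\}$ satisfies no such relation. Therefore $(1,-1)\notin M$. Since the only remaining possible relation, $(1,1)\in M$, yields $\delta\bigl(\tfrac{\delta(\alpha_1\alpha_2)}{\alpha_1\alpha_2}\bigr)=0$, which is vacuous on $H_1^\circ$ where $\alpha_1\alpha_2=1$, the torus $H_1^\circ$ carries no proper $\delta$-structure inside $G_1$; thus $G_1\supseteq H_1^\circ$ and $G_1=H_1=D_m^+$.
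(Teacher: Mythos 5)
Your proof is correct in substance, but it takes a genuinely different route from the paper's. The paper disposes of this proposition in a few lines: having established in the discussion preceding the statement that $[H_1:H_2]=2$, that $H_1=D_m^+$ with $m$ even, and that $H_2$ is one of $D_{m/2}^-$ or $D_{m/2}^+$, it simply applies Proposition~\ref{imprimitive-prop-2} with $k_2$ as the base field (over $k_2$ the solution $e\in\bar{\mathbb{Q}}(x_2)$ to \eqref{ric2} makes the system of imprimitivity \emph{rational}, so item~(1) of that proposition yields $G_2=H_2$ directly), and then concludes $G_1=H_1$ from $[G_1:G_2]=2=[H_1:H_2]$ together with Zariski density. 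You instead re-derive the key content from scratch: you reduce $G_1=H_1$ to $G_1\supseteq H_1^\circ$ via Proposition~\ref{dense}, translate properness of $G_1\cap H_1^\circ$ into $\delta$-dependence of $z_1/z_2$ over $k_2$ via Proposition~\ref{classification} and Corollary~\ref{diffprod} (i.e., the relation $(1,-1)\in M$), and then exclude that relation by the reduced-form and discrete-residue analysis under the conjugation $x_2\mapsto -x_2$ --- in effect inlining, in the $\sigma^2$-over-$k_2$ setting, the arguments the paper proves once (in the proof of Proposition~\ref{imprimitive-prop-1} and in Remark~\ref{imprimitive-2-h-rem}) and then reuses by citation. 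Your approach buys self-containedness and makes explicit \emph{why} no differential relation can live on the torus $H_1^\circ$; the paper's buys brevity and avoids the conjugation bookkeeping, where you have two harmless slips: since $\sigma^2(x_2)=q x_2$, the residues should be taken at orbits $[\beta]_q$ in the variable $x_2$, not $[\beta]_{q^2}$, and the conjugate of a monic reduced form picks up an extra sign $(-1)^{\deg p_1-\deg p_2}$ beyond $(-1)^n$ --- immaterial here, since either sign leads to a multiplicative relation forcing $\alpha_1=\alpha_2$ or $\alpha_1^2=\alpha_2^2$ on $\Delta(H_2)$, both incompatible with $\Delta(H_2)=\{(\alpha_1\alpha_2)^{m/2}=1\}$. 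One step you treat as automatic --- that elements of $G_1\cap H_1^\circ$ act diagonally on $z_1,z_2$, which amounts to identifying $G_1\cap H_2$ with $G_2$ and $G_1\cap H_1^\circ$ with a subgroup of the $\sigma^2\delta$-Galois group of \eqref{imprimitive-3-diag-system-z} --- deserves a sentence, but it is of the same nature as identifications the paper itself uses without comment (e.g., in asserting $[G_1:G_2]=2$).
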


\begin{proof} The remarks above show that under these assumptions the $\sigma$-Galois group $H_2$ for \eqref{mateq} over $k_1$ has index exactly $2$ in the $\sigma$-Galois group $H_1$ for \eqref{mateq} over $k_1$. Thus $H_1=D_m^+$ as in Lemma~\ref{imprimitive-classification}(2) for some even positive integer $m\in2\mathbb{N}$, and $H_2$ is then one of $D_{m/2}^-$ or $D_{m/2}^+$. In either case, it follows from Proposition~\ref{imprimitive-2-prop}, applied over $k_2$ instead of $k_1$, that $H_2=G_2$ is also the $\sigma\delta$-Galois group $G_2$ for \eqref{mateq} over $k_2$. Since the index of $G_2$ in $G_1$, the $\sigma\delta$-Galois group for \eqref{mateq} over $k_1$, is also $2=[H_1:H_2]$, in then follows that $G_1=H_1=D_m^+$ in this case, as claimed.
\end{proof}


\section{Irreducible and primitive groups} \label{large-sec}

Let us denote again $k=C(x)$, where $C$ is a $\delta$-closure of $\bar{\mathbb{Q}}$, $\sigma$ denotes the $C$-linear automorphism of $k$ defined by $\sigma(x)=qx$, and $\delta(x)=1$. We write $H$ for the $\sigma$-Galois group and $G$ for the $\sigma\delta$-Galois group for \begin{equation}\label{difeqsl2}\sigma(Y)=\begin{pmatrix} 0 & 1 \\ -b & -a\end{pmatrix}Y\end{equation} over $k$, where $a,b\in\bar{\mathbb{Q}}(x)$ and $b\neq 0$. In this section we consider the case where $a\neq 0$ and there are no solutions in $\bar{\mathbb{Q}}(x_2)$ to \eqref{ric1} nor to \eqref{ric2}, which is equivalent to the condition that $\mathrm{SL}_2(C)\subseteq H$ by the results of \cite{hendriks:1997} summarized in \S\ref{hendriks-sec}. In this case, $H$ is reductive and the connected component of the identity $H^\circ$ is either $\mathrm{SL}_2(C)$ or $\mathrm{GL}_2(C)$, and in either case the derived subgroup $H^{\circ,\mathrm{der}}=\mathrm{SL}_2(C)$. Therefore by \cite[Thm~.5.2]{arreche-singer:2016} $\mathrm{SL}_2(C)\subseteq G$, and hence $G\subseteq\mathrm{GL}_2(C)$ is determined by the image the determinant map $\mathrm{det}(G)\subseteq\mathbb{G}_m(C)$, which is the $\sigma\delta$-Galois group for $\sigma(y)=by$ over $k$. The proof of the following result is immediate.

\begin{prop}\label{sl2-prop} Suppose there are no solutions to \eqref{ric1} in $\bar{\mathbb{Q}}(x_2)$, $a\neq 0$, and there are no solutions to \eqref{ric2} in $\bar{\mathbb{Q}}(x_2)$. Then $\mathrm{det}(G)\subseteq\mathbb{G}_m(C)$ is determined as follows.
\begin{enumerate}
\item There exist a smallest positive integer $m\in\mathbb{N}$ and $f\in\bar{\mathbb{Q}}(x)^\times$ such that $b^m=\frac{\sigma(f)}{f}$ if and only if $\mathrm{det}(G)=\mu_m$, the group of $m$-th roots of unity.
\item There exist $0\neq c\in\mathbb{Z}$ and $f\in\bar{\mathbb{Q}}(x)$ such that $\frac{\delta(b)}{b}=\sigma(f)-f+c$ if and only if $\mathrm{det}(G)=\left\{\alpha\in C^\times \ \middle| \ \delta\left(\frac{\delta(\alpha)}{\alpha}\right)=0\right\}$.
\item There exists $f\in\bar{\mathbb{Q}}(x)$ such that $\frac{\delta(b)}{b}=\sigma(f)-f$ if and only if $\mathrm{det}(G)=\{\alpha\in C^\times \ | \ \delta(\alpha)=0\}$.
\item Otherwise, $\mathrm{det}(G)=\mathbb{G}_m(C)$.
\end{enumerate}
\end{prop}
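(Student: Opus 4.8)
The plan is to observe, as in the paragraph preceding the statement, that the entire proposition reduces to computing $\mathrm{det}(G)$, the $\sigma\delta$-Galois group of the first-order equation $\sigma(y)=by$ (here $\mathrm{det}(A)=b$, so $\mathrm{det}(Y)$ is a fundamental solution). Fix $y_0\in R$ with $\sigma(y_0)=by_0$; then $\mathrm{det}(G)\leq\mathbb{G}_m(C)$ acts by $\gamma(y_0)=\alpha_\gamma y_0$, and by Proposition~\ref{classification} it is either a finite group $\mu_\ell$ or else contains $\mathbb{G}_m(C^\delta)$ and is cut out by a single linear $\delta$-polynomial $\mathcal{L}$ applied to $\tfrac{\delta\alpha}{\alpha}$. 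The strategy is a short decision tree: first detect whether $y_0$ is algebraic over $k$; if not, detect whether it is $\delta$-algebraic over $k$; and in the $\delta$-algebraic case read off the defining $\delta$-polynomial from the single integer $c$ produced by Corollary~\ref{diffprod}. All certificates $f$ may be taken in $\bar{\mathbb{Q}}(x)$ by the descent argument of Remark~\ref{c-ok}.

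For item~(1), I would use that $\mathrm{det}(G)$ is finite exactly when $y_0$ is algebraic over $k$, which by the Galois correspondence (Theorem~\ref{correspondence}) happens if and only if $y_0^m\in k$ for some minimal $m$, i.e. if and only if $b^m=\sigma(y_0^m)/y_0^m=\sigma(f)/f$ with $f=y_0^m\in\bar{\mathbb{Q}}(x)^\times$; minimality of $m$ then forces $\mathrm{det}(G)=\mu_m$. Assume henceforth that $y_0$ is transcendental over $k$, so $\mathbb{G}_m(C^\delta)\subseteq\mathrm{det}(G)$. Applying Corollary~\ref{diffprod} with the single element $a_1=b$, the solution $y_0$ is $\delta$-algebraic over $k$ if and only if there exist $c\in\mathbb{Z}$ and $f\in\bar{\mathbb{Q}}(x)$ with $\tfrac{\delta(b)}{b}=\sigma(f)-f+c$; if no such relation exists then $y_0$ is $\delta$-transcendental and $\mathrm{det}(G)=\mathbb{G}_m(C)$ by \cite[Prop.~6.26]{hardouin-singer:2008}, which is item~(4). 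When the relation holds, set $g=\tfrac{\delta(y_0)}{y_0}\in R$, so that $\sigma(g)-g=\tfrac{\delta(b)}{b}$ and $\gamma(g)=g+\tfrac{\delta(\alpha_\gamma)}{\alpha_\gamma}$. First note that $c$ is uniquely determined: two choices differ by a constant lying in $(\sigma-1)(k)$, and a nonzero integer is not of the form $\sigma(h)-h$ for $h\in C(x)$ (comparing leading terms and poles). If $c=0$, then $g-f\in R^\sigma=C$ by Theorem~\ref{nnc}, so $g\in k$ is fixed by $G$, forcing $\delta(\alpha_\gamma)=0$ and hence $\mathrm{det}(G)=\{\alpha\mid\delta(\alpha)=0\}$, which is item~(3). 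If $c\neq0$, applying $\delta$ to $\sigma(g-f)-(g-f)=c$ and using $\delta(c)=0$ gives $\delta(g)-\delta(f)\in R^\sigma=C$, so $\delta(g)\in k$ is fixed, forcing $\delta\bigl(\tfrac{\delta(\alpha_\gamma)}{\alpha_\gamma}\bigr)=0$, i.e. $\mathrm{det}(G)\subseteq\bigl\{\alpha\mid\delta\bigl(\tfrac{\delta\alpha}{\alpha}\bigr)=0\bigr\}$.

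The step needing the most care is upgrading the containment in item~(2) to an equality, i.e. showing $\mathrm{det}(G)$ is not strictly smaller than the stated group (in item~(3) this is automatic from $\mathbb{G}_m(C^\delta)\subseteq\mathrm{det}(G)$). I would argue via Cassidy's classification applied to the image $V=\bigl\{\tfrac{\delta\alpha_\gamma}{\alpha_\gamma}\mid\gamma\in\mathrm{det}(G)\bigr\}\leq\mathbb{G}_a(C)$ of $\mathrm{det}(G)$ under the logarithmic derivative. We have shown $V\subseteq C^\delta$, and $V\neq\{0\}$: were $V=\{0\}$ then $g\in k$ would be fixed, whence $\tfrac{\delta(b)}{b}\in(\sigma-1)(k)$ and $c=0$ by the uniqueness remark above, contradicting $c\neq0$. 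Writing $V=\{v\mid\mathcal{L}(v)=0\}$ with $\mathcal{L}=\sum_i c_i\delta^i$ as in Proposition~\ref{classification}, the existence of a nonzero $\delta$-constant solution forces the constant term $c_0=0$, whence $\mathcal{L}=\tilde{\mathcal{L}}\circ\delta$ and $V\supseteq C^\delta$; thus $V=C^\delta$ and $\mathrm{det}(G)=\bigl\{\alpha\mid\delta\bigl(\tfrac{\delta\alpha}{\alpha}\bigr)=0\bigr\}$ exactly. Finally I would record that the four possibilities are mutually exclusive once read as this decision tree: the algebraic case~(1) is disposed of first, and afterward the uniqueness of $c$ cleanly separates items~(2), (3), and (4). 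This completes the identification of $\mathrm{det}(G)$, and hence of $G$.
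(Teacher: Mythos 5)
Your proposal is correct and follows the paper's intended route: the paper reduces everything to computing $\mathrm{det}(G)$ as the $\sigma\delta$-Galois group of the first-order equation $\sigma(y)=by$ and then declares the proof immediate, and your decision tree --- Theorem~\ref{correspondence} for the finite case, Corollary~\ref{diffprod} with $m=1$ plus the uniqueness of $c$ for the $\delta$-algebraic cases, and Proposition~\ref{classification} to upgrade containments to equalities --- is precisely the routine verification being elided, including the correct sequential reading that dissolves the apparent overlap between items (1) and (3). One small simplification: instead of passing to the logarithmic-derivative image $V$ (whose Kolchin-closedness, needed to invoke Proposition~\ref{classification}, requires the standard but uncited fact that images of morphisms of linear differential algebraic groups are closed), you can apply the second clause of Proposition~\ref{classification} directly to $\mathrm{det}(G)\supseteq\mathbb{G}_m(C^\delta)$, writing $\mathrm{det}(G)=\left\{\alpha \ \middle| \ \mathcal{L}\bigl(\tfrac{\delta\alpha}{\alpha}\bigr)=0\right\}$ and evaluating $\mathcal{L}$ on the nonzero $\delta$-constant value $\tfrac{\delta\alpha_\gamma}{\alpha_\gamma}$ you produced to conclude $c_0=0$, which yields the same equality.
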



\section{Examples} \label{examples-sec}
In this section we compute the $\sigma\delta$-Galois group $G$ associated to some concrete second-order linear difference equations over $\bar{\mathbb{Q}}(x)$ with respect to the $q$-dilation operator $\sigma(x)=qx$, where $q\in\mathbb{C}^\times$ is not a root of unity. We will first apply the algorithm of \cite{hendriks:1997} to compute the $\sigma$-Galois group $H$ associated to the equation, and then apply the procedures developed in this paper to compute $G$.

\subsection{Example} Let us consider \eqref{difeq} with \begin{align*} b&=q^3x^6\frac{(x-1)^4(q^2x^2+6qx+6)}{x^2+6x+6} ;\quad\text{and}\\
a &=-q^3x^3\frac{(2q^2)x^4+4(q^2+q)x^3+(7q^2-24q+7)x^2-6(q+1)x+12}{x^2+6x+6}.\end{align*}

Applying the procedure in \cite[\S4.1]{hendriks:1997} or using a computer algebra system (for example, with the QHypergeometricSolution command included in the Maple package QDifferenceEquations) one can verify that there is exactly one solution $u\in\bar{\mathbb{Q}}(x)$ to the first Riccati equation \eqref{ric1} in this case, given by \[u:=x^3(x-1)^2.\] After computing \[\frac{b}{u\sigma(u)}=\frac{(x-1)^2(q^2x^2+6qx+6)}{(qx-1)^2(x^2+6x+6)},\] we see that \[w:=\frac{x^2+6x+6}{(x-1)^2}\in\bar{\mathbb{Q}}(x)^\times\] satisfies $\sigma(w)=\frac{b}{u\sigma(u)}w$, and therefore we are in the setting of Proposition~\ref{wrat}. After verifying that \[\frac{\delta(u)}{u}=5+\frac{2}{x-1}\neq\sigma(f)-f+c\qquad\text{for any} \quad f\in\bar{\mathbb{Q}}(x) \ \text{and} \ c\in\mathbb{Z},\] we proceed to attempt to find a linear differential operator $\mathcal{L}\in\bar{\mathbb{Q}}[\delta]$ of smallest possible order such that there exist $f\in\bar{\mathbb{Q}}(x)$ and $c\in\bar{\mathbb{Q}}$ satisfying \[\mathcal{L}\left(\frac{\delta(u)}{u}\right)-w=\sigma(f)-f+c.\] Since \[w=\frac{x^2+6x+6}{(x-1)^2}=\frac{5}{(x-1)^2}+\frac{8}{x-1}+1\] has as its only non-zero $q$-discrete residues:\[q\text{-}\mathrm{dres}(w,[1]_q,2)=5;\qquad q\text{-}\mathrm{dres}(w,[1]_q,1)=8;\qquad\text{and}\quad q\text{-}\mathrm{dres}(w,\infty)=1,\] we see that if there exists such an $\mathcal{L}\in\bar{\mathbb{Q}}[\delta]$ then its order must be exactly $1$. Writing $\mathcal{L}=e_1\delta+e_0$, we find that \[\mathcal{L}\left(\frac{\delta(u)}{u}\right)-w=\frac{-2e_1-5}{(x-1)^2}+\frac{-2e_1+2e_0-8}{x-1}+5e_0-1,\] which has the desired form $\sigma(f)-f+c$ for some $f\in\bar{\mathbb{Q}}(x)$ and $c\in\bar{\mathbb{Q}}$ if and only if \[-2e_1-5=0=-2e_1+2e_0-8 \quad\Longleftrightarrow\quad e_1=-\tfrac{5}{2} \ \text{and} \ e_0=\tfrac{3}{2}.\] The corresponding value of $c=5e_0-1=\frac{13}{2}\neq 0$. With this, we conclude that the $\sigma\delta$-Galois group for \eqref{difeq} over $k_1$ for this choice of coefficients $a,b\in\bar{\mathbb{Q}}(x)$ is \[G=\left\{\begin{pmatrix}\alpha & \xi \\ 0 & \alpha\end{pmatrix} \ \middle| \ \alpha,\xi\in C, \ \alpha\neq 0, \ \delta\left(\frac{\xi}{\alpha}\right)=-\frac{5}{2}\delta^2\left(\frac{\delta(\alpha)}{\alpha}\right)+\frac{3}{2}\delta\left(\frac{\delta(\alpha)}{\alpha}\right)\right\}.\]

\subsection{Example} Let us consider \eqref{difeq} with \[a=-(q+q^{1/2})x\qquad\text{and}\qquad b=q^{1/2}(x^2-x).\] Since the valuations at $x=0$ of the coefficients are $v(a)=1$ and $v(b)=1$, we are in the case where $v(b)\leq 2v(a)$ and $v(b)$ is odd, and therefore there are no solutions to \eqref{ric1} in $\bar{\mathbb{Q}}(x)$ (cf.~\cite[\S4.1]{hendriks:1997}). However, $u=x+x^{1/2}\in\bar{\mathbb{Q}}(x^{1/2})$ and $\bar{u}=x-x^{1/2}\in\bar{\mathbb{Q}}(x^{1/2})$ both satisfy \eqref{ric1}. Since \[\frac{\delta(b)}{b}=2+\frac{1}{x-1}\neq\sigma(f)-f+c \qquad\text{for any} \quad f\in\bar{\mathbb{Q}}(x) \ \text{and} \ c\in\mathbb{Z},\] we deduce that $\mathrm{det}(G)=\mathbb{G}_m(C)$. Since there is no $f\in\bar{\mathbb{Q}}(x^{1/2})^\times$ such that \[\left(\frac{u}{\bar{u}}\right)^2=x\left(\frac{x^{1/2}+1}{x^{1/2}-1}\right)^2=\frac{\sigma(f)}{f},\] we conclude that the $\sigma\delta$-Galois group $G$ for \eqref{difeq} over $k_1$ for this choice of coefficients $a,b\in\bar{\mathbb{Q}}(x)$ is \[G=\{\pm1\}\ltimes\mathbb{G}_m(C)^2=\left\{\begin{pmatrix}\alpha_1 & 0 \\ 0 & \alpha_2\end{pmatrix},\begin{pmatrix} 0 & \lambda_1 \\ \lambda_2 & 0 \end{pmatrix} \ \middle| \ \alpha_1,\alpha_2,\lambda_1,\lambda_2\in C^\times\right\}.\]

\subsection{Example} Let us consider \eqref{difeq} with $a=0$ and $b=-q^{1/2}x$. This example was discussed in \cite[\S4.1]{arreche-singer:2016}, as an example of a projectively integrable system whose $\sigma$-Galois group $H$ was solvable but not abelian; in fact it was proved there using ad-hoc methods that \[H=\left\{\begin{pmatrix} \alpha & 0 \\ 0 & \alpha \end{pmatrix},\begin{pmatrix}\alpha & 0 \\ 0 & -\alpha\end{pmatrix},\begin{pmatrix}0 & \lambda \\ \lambda & 0\end{pmatrix},\begin{pmatrix} 0 & \lambda \\ -\lambda & 0 \end{pmatrix} \ \middle| \ \alpha,\lambda\in C^\times\right\}.\] We can now prove this systematically, as well as find the corresponding $\sigma\delta$-Galois group $G$, using the results of \S\ref{imprimitive-sec:1}. Since the valuations at $x=0$ of the coefficients are $v(a)=\infty$ and $v(b)=1$, we are in the case where $v(b)\leq 2v(a)$ and $v(b)$ is odd, and therefore there are no solutions to \eqref{ric1} in $\bar{\mathbb{Q}}(x)$ (cf.~\cite[\S4.1]{hendriks:1997}). However we see that $u=x^{1/2}\in\bar{\mathbb{Q}}(x^{1/2})$ and $\bar{u}=-u=-x^{1/2}\in\bar{\mathbb{Q}}(x^{1/2})$ both satisfy \eqref{ric1}. Since $\frac{\delta(b)}{b}=1,$ we see that $\mathrm{det}(G)=\left\{\alpha\in C^\times \ \middle| \ \delta\left(\frac{\delta(\alpha)}{\alpha}\right)=0\right\}$. We also verify that $\left(\frac{u}{\bar{u}}\right)=(-1)^2=1.$ This concludes the computation that \[G=\left\{\begin{pmatrix} \alpha & 0 \\ 0 & \alpha \end{pmatrix},\begin{pmatrix}\alpha & 0 \\ 0 & -\alpha\end{pmatrix},\begin{pmatrix}0 & \lambda \\ \lambda & 0\end{pmatrix},\begin{pmatrix} 0 & \lambda \\ -\lambda & 0 \end{pmatrix} \ \middle| \ \alpha,\lambda\in C^\times, \ \delta\left(\frac{\delta(\alpha)}{\alpha}\right)=0=\delta\left(\frac{\delta(\lambda)}{\lambda}\right)\right\}.\]

\subsection{Example} In \cite{KoutschanYi:2018} the authors develop algorithms for desingularization of $q$-difference--differential operators. In \cite[Example~5.2]{KoutschanYi:2018}, those results were applied in the study of the difference equations satisfied by the \emph{colored Jones polynomials} of several knots. In spite of the name, a colored Jones polynomial is not actually a polynomial in general, but rather consist of an infinite sequence of rational functions in $\mathbb{Q}(q)$, where $q$ is a formal indeterminate. We refer to \cite[\S5]{KoutschanYi:2018} and the references therein for additional details.

This second-order difference equation is satisfied by the colored Jones polynomial (after normalization) of the knot $K^{\mathrm{twist}}_{-1}$; we emphasize that the name ``polynomial" may be misleading: in general, the colored Jones polynomial of a knot actually consists of an infinite sequence of rational functions in $\mathbb{Q}(q)$. 
Let us consider~\eqref{difeq} with 
$$a= \frac{(q x-1) (q x+1) \bigl(q^4 x^4-q^3 x^3-q^3 x^2-q x^2-q x+1\bigr)}{q^2 x^2 \bigl(q x^2-1\bigr)}$$ and 
$$b= \frac{q^3 x^2-1}{q x^2-1}$$
\begin{equation} \label{EQ:JonesPoly}
y(q^2x)+a(x)y(qx)+b(x)y(x)=0.
\end{equation}
The corresponding second-order linear difference equation with this choice of coefficients $a,b\in\overline{\mathbb{Q}(q)}(x)$, where $q$ is a formal indeterminate, is satisfied by the colored jones polynomial for $K^{\mathrm{twist}}_{-1}$ (see \cite[\S5, Fig.~1]{KoutschanYi:2018}).

To compute the $\sigma\delta$-Galois group for this equation over $C(x)$, where $C$ is a $\delta$-closure of the $\delta$-constant field $\overline{\mathbb{Q}(q)}$, proceeds as follows. Using the QHypergeometricSolution command included in the Maple package QDifferenceEquations, we have verified that the Riccati equations \eqref{ric1} and \eqref{ric2} do not admit any solutions in $C(x^{1/2})$. Therefore $\mathrm{SL}_2(C)\subseteq G\subseteq H$, where $H$ denotes the $\sigma$-Galois group, as discussed in \S\ref{hendriks-sec} and \S\ref{large-sec}. We see that \[b= \frac{q^3 x^2-1}{q x^2-1}=\frac{\sigma(qx^2-1)}{qx^2-1}.\] Therefore, $G=\mathrm{SL}_2(C)$ in this case.



\end{document}